\newcommand{\F}{{\mathbb{F}}}
\newcommand{\C}{{\mathbb{C}}}
\newcommand{\Z}{{\mathbb{Z}}}
\newcommand{\bB}{{\mathbf{B}}}
\newcommand{\bU}{{\mathbf{U}}}
\newcommand{\bC}{{\mathbf{C}}}
\newcommand{\bG}{{\mathbf{G}}}
\newcommand{\bH}{{\mathbf{H}}}
\newcommand{\bK}{{\mathbf{K}}}
\newcommand{\bL}{{\mathbf{L}}}
\newcommand{\bP}{{\mathbf{P}}}
\newcommand{\bT}{{\mathbf{T}}}
\newcommand{\bW}{{\mathbf{W}}}
\newcommand{\fX}{{\mathfrak{X}}}
\newcommand{\cE}{{\mathcal{E}}}
\newcommand{\cN}{{\mathcal{N}}}
\newcommand{\cO}{{\mathcal{O}}}
\newcommand{\cH}{{\mathcal{H}}}
\newcommand{\tis}{{\tilde{s}}}
\newcommand{\tbG}{{\tilde{\bG}}}
\newcommand{\tF}{{\tilde{F}}}
\newcommand\CF{\operatorname{CF}}
\newcommand\Irr{\operatorname{Irr}}
\let\@wraptoccontribs\wraptoccontribs
\newtheorem{thm}{Theorem}[section]
\newtheorem{cor}[thm]{Corollary}
\newtheorem{prop}[thm]{Proposition}
\newtheorem{lem}[thm]{Lemma}
\theoremstyle{definition}
\newtheorem{exmp}[thm]{Example}
\newtheorem{abs}[thm]{}
\theoremstyle{remark}
\newtheorem{rem}[thm]{Remark}
\renewcommand{\leq}{\leqslant}
\renewcommand{\geq}{\geqslant}
\begin{document}
\title[Character tables of $E_6(q)_{\text{ad}}$ and
${^2\!E}_6(q)_{\text{ad}}$]{On the character tables of the finite 
reductive groups $E_6(q)_{\text{ad}}$ and ${^2\!E}_6(q)_{\text{ad}}$}
\author{Meinolf Geck}
\address{Lehrstuhl f\"ur Algebra, FB Mathematik\\ Universit\"at Stuttgart\\ 
Pfaffenwaldring 57\\ 70569 Stuttgart, Germany}
\email{meinolf.geck@mathematik.uni-stuttgart.de}
\contrib{With an appendix by J. Hetz}

\subjclass{Primary 20C33; Secondary 20G40}
\keywords{Reductive groups, uniform functions, character sheaves}
\date{May 11, 2024}

\begin{abstract} We show how the character tables of the groups 
$E_6(q)_{\text{ad}}$ and ${^2\!E}_6(q)_{\text{ad}}$ can be constructed, 
where $q$ is a power of~$2$. (Partial results are also obtained for
any $q$ not divisible by~$3$.) This is based on previous work by Hetz, 
Lusztig, Malle, Mizuno and Shoji, plus computations using Michel's 
version of {\sf CHEVIE}. We also need some general results that are 
specific to semisimple groups which are not of simply connected type.
A further crucial ingredient is the determination of the values of the 
unipotent characters on unipotent elements for groups of type $D_4$ and 
$D_5$ (in characteristic~$2$).
\end{abstract}

\maketitle

%%%%%%%%%%%%%%%%%%%%%%%%%%%%%%%%%%%%%%%%%%%%%%%%%%%%%%%%%%%%%%%%%%%%%%%%%%%
\section{Introduction} \label{sec0}

Let $p$ be a prime and $k=\overline{\F}_p$ be an algebraic closure of
the field with $p$ elements. Let~$\bG$ be a connected reductive algebraic
group over $k$ and assume that $\bG$ is defined over the finite subfield
$\F_q\subseteq k$, where $q$ is a power of~$p$. Let $F\colon \bG\rightarrow 
\bG$ be the corresponding Frobenius map and $\bG^F=\bG(\F_q)$ be the
finite group of fixed points. This paper is part of an ongoing project 
(involving various authors) to obtain as much information as possible 
about the character table of $\bG^F$, where $\bG$ is simple of exceptional
type. (See the recent survey \cite{gema} for general background.) Here, 
we consider the case where $\bG$ is simple, adjoint of type $E_6$. We 
shall obtain complete results for $p=2$, and at least partial results for 
any $p\neq 3$. (For $p=3$, our methods do not yield anything new.) 

The general framework is provided by Lusztig's geometric approach 
\cite{LuB}, \cite{L7}, starting from the generalised characters 
$R_{\bT,\theta}^\bG$ of \cite{DeLu}. Our point of view will be that the 
values of these generalised characters are already known. Since, 
in general, they do not span the whole space of complex-valued class 
functions on $\bG^F$, we then need to focus on determining the ``missing'' 
functions. For this purpose, one proceeds along a list $\Sigma_1,\ldots,
\Sigma_N$ of the $F$-stable conjugacy classes of $\bG$. For each~$i$, the 
set $\Sigma_i^F$ splits into conjugacy classes of $\bG^F$ where the
splitting is controlled by the finite group $A_\bG(g_i):=C_\bG(g_i)/
C_\bG^\circ(g_i)$ ($g_i \in \Sigma_i^F$). The size of $A_\bG(g_i)$ is a 
kind of measure for the difficulty of computing the character values on 
elements in $\Sigma_i^F$. For example, if $A_\bG(g_i)=\{1\}$, then 
$\Sigma_i^F$ is a single $\bG^F$-conjugacy class and the character 
values on $g_i$ can be described in terms of the various $R_{\bT,
\theta}^\bG$. If $A_\bG(g_i) \neq \{1\}$, then this is no longer true 
in general. 

In this set-up, it will be convenient to consider the vector space 
$\CF(\bG^F\mid \Sigma_i)$ of all complex-valued class functions on 
$\bG^F$ that are $0$ on elements
outside~$\Sigma_i^F$. Then the aim is to find a basis of $\CF(\bG^F\mid 
\Sigma_i)$ such that
\begin{itemize}
\item the values of the functions in that basis are explicitly
known and 
\item the decomposition of each function in that basis as 
a linear combination of $\Irr(\bG^F)$ is explicitly known.
\end{itemize}
If this problem is solved for each $\Sigma_i$, then we can construct the
character table of $\bG^F$ from the bases of the various subspaces $\CF
(\bG^F\mid \Sigma_i)$. Now Lusztig \cite[0.4]{L7} presents a general 
strategy for solving that problem (based on results in \cite{L5}), but this 
involves the delicate issue of fixing normalisations of characteristic 
functions of $F$-invariant character sheaves on $\bG$. Similarly to our 
previous work \cite{unif4} on type $F_4$, the special feature of groups of 
adjoint type~$E_6$ in characteristic~$2$ is that one can largely avoid that 
issue, and only deal with it in some very special cases.

However, an additional complication for $\bG$ of adjoint type~$E_6$ arises 
from the fact that there are semisimple elements in $\bG$ whose centraliser 
is not connected. In principle, this could be avoided by working with a
simple group of simply connected type~---~but this would create numerous 
further complications of a different (and arguably more involved) nature, 
caused by the fact that the center is not connected. For example, the main 
results of Shoji \cite{S2}, \cite{S3} (on which we heavily rely) are only
proved for groups with a connected center. Another possibility would be to 
realise our group as $\bG\cong \tbG/Z(\tbG)$ where $\tbG$ has a simply 
connected derived subgroup \textit{and} the center $Z(\tbG)$ is connected. 
However, this would also create additional complications as far as 
characteristic functions of cuspidal character sheaves are concerned (which 
are needed in our argument). Thus, all in all, we have chosen to work 
directly with $\bG$ simple, adjoint of type $E_6$.

In Section~\ref{prelim}, we prepare some basic notation and recall (known)
reduction techniques for the determination of character values. In
Section~\ref{subsp}, we consider first examples concerning the subspaces 
$\mbox{CF}(\bG^F\mid \Sigma_i)$ as above. In particular, the values of 
all unipotent characters on unipotent elements are determined for groups 
$\bG$ of classical type $D_4$ and~$D_5$ in characteristic~$2$. (These results,
which are partly new and perhaps of independent interest, are needed later
on in the discussion of groups of type $E_6$.) Section~\ref{scad} contains 
the main general results of this paper. These provide some tools for 
dealing with the situation where $\bG$ is semisimple but not of simply 
connected type.

For $\bG$ simple, adjoint of type $E_6$ and $p=2$, the values of the
unipotent characters on unipotent elements have already been determined
by Hetz \cite{Het3}, \cite{Het4}. Most of the remaining work centers 
around a particular class $\Sigma_i$ which is the supporting set for the 
cuspidal character sheaves on $\bG$. In this case, we have $A_\bG(g_i)\cong 
\Z/3\Z\times \Z/3\Z$ for $g_i \in \Sigma_i^F$, where the semisimple part of 
$g_i$ has order~$3$ and a non-connected centraliser. In Section~\ref{e6p2} 
we show how to find a basis of $\CF(\bG^F\mid \Sigma_i)$ which 
satisfies the above requirements; this relies on the results in 
Section~\ref{scad} combined with previous results from \cite{S3}, 
\cite{pamq}. (And this part of the argument does not only work for $p=2$ but 
for any $p\neq 3$; on the other hand, if $p=3$, then there is no element
$g_i$ at all with the above properties, so our methods do not yield anything
new for this case.) Once this is achieved, we can basically follow the 
strategy in \cite{unif4} to complete the character table of~$\bG^F$; see 
Section~\ref{finale6}. 

Finally, we point out that we do describe a concrete procedure for 
constructing the character table of $\bG^F$, but we shall not present 
actual tables of character values. With some more effort (mainly of a
computational nature), it should certainly be possible to produce a 
``generic'' table with the values of the unipotent characters on all 
elements of~$\bG^F$. I understand that Jonas Hetz (private communication)
has recently produced such a ``generic'' table for the unipotent 
characters of $F_4(q)$ where $q$ is a power of~$2$, following the strategy 
in \cite{unif4}. However, there are non-unipotent characters whose values 
on certain elements are sums of roots of unity with $|\bW|$ distinct terms, 
where $\bW$ is the Weyl group of $\bG$. For $\bG$ of type $E_6$ we have 
$|\bW|=51840$, it is to be discussed if one really wants to write down 
explicit tables, or even store them on a computer.

%%%%%%%%%%%%%%%%%%%%%%%%%%%%%%%%%%%%%%%%%%%%%%%%%%%%%%%%%%%%%%%%%%%%%%%%%%%
\section{Preliminaries} \label{prelim}

Let $\bG$ be a connected reductive group and $F\colon \bG\rightarrow \bG$ 
be a Frobenius map, with respect to an $\F_q$-rational structure of $\bG$. 
Let $\CF(\bG^F)$ be the vector space of complex-valued class functions 
on~$\bG^F$, with standard inner product $\langle \;,\; \rangle_{\bG^F}$ 
given by 
\[ \langle f,f'\rangle_{\bG^F}:=|\bG^F|^{-1}\sum_{g \in \bG^F} f(g)
\overline{f'(g)} \qquad \mbox{for $f,f'\in \CF(\bG^F)$},\]
where the bar denotes complex conjugation. The set $\Irr(\bG^F)$ of 
irreducible characters of~$\bG^F$ forms an orthonormal basis of 
$\CF(\bG^F)$. For any subset $C\subseteq \bG^F$ that is a union of 
$\bG^F$-conjugacy classes, we denote by $\varepsilon_C \in \CF
(\bG^F)$ the indicator function of $C$, that is, for $x\in \bG^F$ we have 
$\varepsilon_C(x)=1$ if $x\in C$, and $\varepsilon_C(x)=0$ otherwise. Note
that, if $C$ is a single $\bG^F$-conjugacy class and $g\in C$, then 
\[ \rho(g)=|C_\bG(g)^F|\langle \rho,\varepsilon_C\rangle_{\bG^F}
\qquad\mbox{for all $\rho\in \Irr(\bG^F)$}.\]
Let $\fX(\bG,F)$ be the set of all pairs $(\bT,\theta)$ where $\bT
\subseteq \bG$ is an $F$-stable maximal torus and $\theta \in \Irr(\bT^F)$.
For such a pair $(\bT,\theta)$ let $R_{\bT,\theta}^\bG\in \CF(\bG^F)$ be 
the generalised character defined by Deligne and Lusztig \cite{DeLu}. It 
is known that every $\rho\in \Irr(\bG^F)$ occurs in $R_{\bT,\theta}^\bG$
for some $(\bT,\theta)\in \fX(\bG,F)$. Of particular interest is the case 
where this happens for $\theta=1$ (the trivial character): in that case, 
$\rho$ is called a ``unipotent'' character. We refer to \cite{rDiMi},
\cite{gema} for general properties of the generalised characters 
$R_{\bT,\theta}^\bG$.

\begin{abs} \label{unif} Let $f\in \CF(\bG^F)$. Then $f$ is called
a ``uniform'' function if $f$ can be written as a linear combination of 
$R_{\bT, \theta}^\bG$ for various $(\bT,\theta)\in \fX(\bG,F)$. In this 
case, we have (see \cite[Prop.~10.2.4]{rDiMi}): 
\[ f=|\bG^F|^{-1}\sum_{(\bT,\theta)\in \fX(\bG,F)} |\bT^F| \langle 
f,R_{\bT,\theta}^\bG\rangle_{\bG^F}R_{\bT,\theta}^\bG.\]
For example, if $\Sigma$ is an $F$-stable conjugacy class of $\bG$, then
$\Sigma^F$ is a union of $\bG^F$-conjugacy classes and the indicator 
function $\varepsilon_{\Sigma^F}\in \CF(\bG^F)$ is a uniform function; 
see \cite[Appendix]{mylaus} (and also \cite{Lu21} for a different proof).

The subspace of $\CF(\bG^F)$ consisting of all uniform functions provides 
a first, and very useful approximation to $\CF(\bG^F)$ (especially
for $\bG$ of low rank, $\leq 8$ say).
\end{abs}

\begin{exmp} \label{unif1} Let $C$ be a $\bG^F$-conjugacy class and 
$g\in C$. Assume that the indicator function $\varepsilon_C\in \CF(\bG^F)$ 
is a uniform function. Let $\rho\in \Irr(\bG^F)$. Then, expressing 
$\varepsilon_C$ as in~\ref{unif}, we obtain the following formula:
\[ \rho(g)=|C_\bG(g)^F|\langle \rho,\varepsilon_C\rangle_{\bG^F}=
|\bG^F|^{-1}\sum_{(\bT,\theta)\in \fX(\bG,F)} |\bT^F|\,\langle 
R_{\bT,\theta}^\bG,\rho\rangle_{\bG^F}\,R_{\bT,\theta^{-1}}^\bG(g).\]
Hence, in this case, the value $\rho(g)$ is determined by the multiplicities
$\langle R_{\bT,\theta}^\bG,\rho \rangle_{\bG^F}$ and the values 
$R_{\bT,\theta}^\bG(g)$, where $(\bT,\theta)$ runs over all pairs in 
$\fX(\bG,F)$. 

This situation occurs, for example, when the center $Z(\bG)$ is connected 
and the root system of $\bG$ is a direct sum of root systems of type $A_m$, 
for various $m\geq 1$. In that case, \textit{every} class function on $\bG^F$ 
is uniform; see \cite[Cor.~2.4.19]{gema} (and the references there).
\end{exmp}

\begin{exmp} \label{pconst} A class function $\gamma\in \mbox{CF}(\bG^F)$ is
called $p$-constant if, for all $g\in \bG^F$, we have $\gamma(g)=\gamma(s)$ 
where $s\in \bG^F$ is the semisimple part of~$g$. In this case, we have:
\[ f \in \mbox{CF}(\bG^F) \text{ uniform} \qquad \Rightarrow \qquad
\gamma{\cdot} f\in \mbox{CF}(\bG^F) \text{ uniform}.\]
(This immediately follows from \cite[Prop.~10.1.6]{rDiMi}.) Let $(\bT,
\theta)\in \fX(\bG,F)$ and $u\in \bG^F$ be unipotent. Then the value
$R_{\bT,\theta}^\bG(u)$ does not depend on $\theta$, and will be denoted
by $Q_\bT^\bG(u)$. The function $u \mapsto Q_{\bT}^\bG(u)$ is called
the Green function associated with~$\bT$. We may regard $Q_{\bT}^\bG$ as 
a function on all of $\bG^F$ by letting its value be~$0$ on non-unipotent
elements. With this convention, we have $Q_\bT^\bG= \gamma_{\text{uni}}^\bG
{\cdot} R_{\bT,\theta}^\bG$ where $\gamma_{\text{uni}}^\bG\in \mbox{CF}
(\bG^F)$ is the class function which takes value~$1$ on unipotent
elements, and value~$0$ otherwise. Then $\gamma_{\text{uni}}^\bG$ is 
$p$-constant and so $Q_\bT^\bG$ (as a function on all of $\bG^F$) is 
uniform.
\end{exmp}
 
\begin{abs} \label{exp01} An $F$-stable closed subgroup $\bL\subseteq \bG$
is called a ``regular subgroup'' if $\bL$ is a Levi complement in some (not
necessarily $F$-stable) parabolic subgroup $\bP\subseteq \bG$. These 
subgroups can be used for inductive arguments, in the following situation.
Let $g\in \bG$ and $g=su=us$ be the Jordan decomposition of $g$, where 
$s\in \bG$ is semisimple and $u\in \bG$ is unipotent. We say that $g$ (or 
its conjugacy class) is ``isolated'' if $C_{\bG}^\circ(s)$ is not contained 
in a Levi subgroup of a proper parabolic subgroup of~$\bG$. Assume now that 
$g\in \bG^F$ is not isolated. Then there will be a regular subgroup $\bL
\subsetneqq \bG$ such that $C_\bG^\circ(s)\subseteq \bL$. In this case, 
the values $\rho(g)$, for any $\rho\in \Irr(\bG^F)$, can be computed using 
Schewe's formula 
\[ \rho(g)=\sum_{\psi\in \Irr(\bL^F)} \langle R_{\bL\subseteq \bP}^\bG
(\psi),\rho\rangle_{\bG^F} \psi(g) \qquad\mbox{(see 
\cite[Cor.~3.3.13]{gema})}.\]
Here, $R_{\bL\subseteq \bP}^\bG$ is Lusztig's ``twisted induction'';
see \cite[\S 9.1]{rDiMi}. In order to evaluate the above formula,
we need to assume, firstly, that the character table of the smaller 
group $\bL^F\subsetneqq \bG^F$ is known~---~which we can do by an 
inductive procedure. Secondly, we need to know the decomposition of 
$R_{\bL\subseteq \bP}^\bG(\psi)$, for any $\psi\in \Irr(\bL^F)$, as a 
linear combination of $\Irr(\bG^F)$. For the groups that we are going to
consider here, the latter problem is under control as discussed in
\cite[\S 4.6, \S 4.7]{gema}. If $\psi$ is a unipotent character, then
the solution is implemented by the function {\tt LusztigInductionTable} 
in Michel's {\sf CHEVIE} \cite{gap3jm}.
\end{abs}

\begin{abs} \label{rem00} Let $\Sigma$ be an $F$-stable conjugacy class
of $\bG$. We recall how the splitting of $\Sigma^F$ into conjugacy classes
of $\bG^F$ is determined. Let us fix an element $g_0\in \Sigma^F$ and set 
$A_\bG(g_0):=C_\bG(g_0)/C_\bG^\circ(g_0)$. Then $F$ induces an automorphism 
of $A_\bG(g_0)$ that we denote by the same symbol. Given $a\in A_\bG(g_0)$, 
let $\dot{a}\in C_\bG(g_0)$ be a representative of~$a$. By Lang's Theorem, 
we can write $\dot{a}=x^{-1}F(x)$ for some $x\in \bG$; then $g_a :=xgx^{-1}
\in\Sigma^F$. Let $C_a$ be the $\bG^F$-conjugacy class of~$g_a$. One easily 
checks that $C_a$ only depends on~$a$ (and not on the choice of $x$); 
furthermore $\Sigma^F=\bigcup_{a\in A_\bG(g_0)} C_a$, where $C_a=C_{a'}$ 
if and only if $a,a'$ are $F$-conjugate in $A_\bG(g_0)$, that is, there 
exists some $b\in A_\bG(g_0)$ such that $a'=baF(b)^{-1}$. (See 
\cite[\S 2.7]{gema} for further details.) In particular, if $C_\bG(g_0)$
is connected, then $\Sigma^F$ is a single $\bG^F$-conjugacy class.
\end{abs}

\begin{exmp} \label{exp00} Let $\Sigma$ be an $F$-stable conjugacy class
of $\bG$. Let $g_0\in \Sigma^F$ and assume that the group $A_\bG(g_0)$ is 
abelian. Then the map $\varphi\colon A_\bG(g_0)\rightarrow A_\bG(g_0)$, 
$a\mapsto a^{-1}F(a)$, is a group homomorphism with kernel equal to 
$A_\bG(g_0)^F$; the $F$-conjugacy classes of $A_\bG(g_0)$ are precisely 
the cosets of $A_\bG(g_0)$ modulo the image of $\varphi$. Thus, 
we conclude:
\begin{itemize}
\item[(a)] $\;\,|A_\bG(g_0)^F|=$ number of $F$-conjugacy classes of 
$A_\bG(g_0)$.
\item[(b)] $\,$ If $A_\bG(g_0)^F=\{1\}$, then $C:=\Sigma^F$ is a single
conjugacy class of $\bG^F$.
\end{itemize}
In case (b), we can apply the remarks in~\ref{unif}; then the indicator 
function $\varepsilon_C\in \CF(\bG^F)$ is a uniform function and so the 
values $\rho(g)$ for any $\rho\in \Irr(\bG^F)$ can be determined using the 
formula in Example~\ref{unif1}.
\end{exmp} 

\begin{abs} \label{shintani} Let $\Sigma$ be an $F$-stable conjugacy class
of $\bG$. There is a natural operation on the $\bG^F$-conjugacy classes 
contained in $\Sigma^F$, defined as follows. Let $g_0\in \Sigma^F$ be fixed. 
If $C$ is a $\bG^F$-conjugacy class contained in $\Sigma^F$, then $C=C_a$ 
where $a\in A_\bG(g_0)$. (Notation as in~\ref{rem00}.) Let $g\in C$ 
and write $g=x^{-1}F(x)$ where $x\in \bG$. Then one easily checks that $g':=
F(x)x^{-1}=xgx^{-1}\in \bG^F$ and that the $\bG^F$-conjugacy class of $g'$ 
does not depend on the choice of~$g$ or~$x$; we denote the $\bG^F$-conjugacy 
class of $g'$ by $t_1(C)$; clearly, we have $t_1(C)\subseteq \Sigma^F$. The
map $C\mapsto t_1(C)$ is called ``twisting operator''; see, e.g., Shoji 
\cite[1.16]{S2} or Digne--Michel \cite[Chap.~IV]{DiMi0}. One easily shows 
that 
\[ t_1(C_a)=C_{\bar{g}_0a} \qquad \mbox{for all $a\in A_\bG(g_0)$},\]
where $\bar{g}_0$ denotes the image of $g_0$ in $A_\bG(g_0)$.
We shall also need the following result.
\end{abs}

\begin{lem} \label{exp00a} In the setting of \ref{shintani}, let $a\in 
A_\bG(g_0)$. If $g_1\in C_a$ and $g_2\in t_1(C_a)=C_{\bar{g}_0a}$, then 
the semisimple parts of $g_1,g_2$ are conjugate in $\bG^F$.
\end{lem}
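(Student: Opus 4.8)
The plan is to reduce the assertion, via the definition of the twisting operator, to a single application of Lang's theorem inside the centraliser of a semisimple element.

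First I would unwind $t_1$ as it is defined in~\ref{shintani}. Take any $g_1\in C_a$ and, using Lang's theorem, write $g_1=x^{-1}F(x)$ with $x\in\bG$; then $g_1':=F(x)x^{-1}=xg_1x^{-1}$ lies in $t_1(C_a)=C_{\bar g_0a}$ by the very construction of $t_1$. Since $t_1(C_a)$ is a single $\bG^F$-conjugacy class, any $g_2\in t_1(C_a)$ is $\bG^F$-conjugate to $g_1'$, and $\bG^F$-conjugation preserves Jordan decomposition; hence it suffices to show that the semisimple parts of $g_1$ and $g_1'$ are $\bG^F$-conjugate. Put $s:=(g_1)_s\in\bG^F$; the semisimple part of $g_1'=xg_1x^{-1}$ is $xsx^{-1}$, so what I need is an element $z\in\bG^F$ with $zsz^{-1}=xsx^{-1}$, equivalently $x^{-1}z\in C_\bG(s)$.

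The crucial point is that $g_1$ itself lies in the \emph{connected} group $\bH:=C_\bG^\circ(s)$: the semisimple element $s$ lies in a maximal torus of $\bG$, which is connected and centralises $s$, so $s\in\bH$; and the unipotent part $(g_1)_u$ commutes with $s$ and is unipotent, hence lies in the identity component of $C_\bG(s)$, i.e. in $\bH$; therefore $g_1=s\,(g_1)_u\in\bH$. Since $s\in\bG^F$, the group $\bH$ is $F$-stable and connected, so Lang's theorem for $\bH$ produces $h\in\bH$ with $hF(h)^{-1}=g_1\;(=x^{-1}F(x))$. I would then set $z:=xh$ and verify directly that $F(z)=z$ (using $F(x)=xg_1$ together with $g_1F(h)=hF(h)^{-1}F(h)=h$) and that $zsz^{-1}=xsx^{-1}$ (using $h\in\bH\subseteq C_\bG(s)$). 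This produces the required $z\in\bG^F$ and, combined with the reduction above, completes the proof.

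I do not anticipate a genuine obstacle: the argument is essentially formal once one sees that $g_1\in C_\bG^\circ(s)$, and that in turn rests only on the standard facts that a semisimple element lies in a maximal torus and that a unipotent element lies in the identity component of its centraliser. A more structural phrasing would note that the obstruction to $\bG^F$-conjugacy of $s$ and $xsx^{-1}$ lives in $H^1(F,C_\bG(s))$ and vanishes because $g_1$ represents it already in the connected subgroup $C_\bG^\circ(s)$, for which $H^1(F,-)$ is trivial; but the explicit construction of $z$ above is shorter and self-contained.
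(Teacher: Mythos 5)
Your argument is correct and rests on exactly the same two pillars as the paper's proof: that a $\bG^F$-element lies in the connected group $C_\bG^\circ(s)$ with $s$ its semisimple part (the paper simply cites \cite[Prop.~3.5.3]{rDiMi} for this rather than reproving it), and Lang's theorem applied inside $C_\bG^\circ(s)$. The only organizational difference is that the paper chooses the conjugating element $x$ to lie in $C_\bG^\circ(s_a)$ from the outset, which makes the conclusion immediate, whereas you first take an arbitrary $x\in\bG$ with $g_1=x^{-1}F(x)$ and then repair it with a second Lang-theorem step to produce $z=xh\in\bG^F$; both versions are sound, and the paper's is a touch more economical.
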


\begin{proof} Let $\dot{a}\in C_\bG(g_0)$ be a representative of $a$ and
write $\dot{a}=y^{-1}F(y)$ where $y\in \bG$. Then $g_a:=yg_0y^{-1} \in 
C_a$; so we may assume that $g_1=g_a$. Now write $g_a=s_au_a=u_as_a$ 
where $s_a\in \bG^F$ is semisimple and $u_a\in \bG^F$ is unipotent. 
Then $g_a\in C_\bG^\circ(s_a)$; see \cite[Prop.~3.5.3]{rDiMi}. By Lang's
Theorem applied to $C_\bG^\circ(s_a)$, we can find some $x\in 
C_\bG^\circ(s_a)$ such that $g_a=x^{-1}F(x)$. By \ref{shintani}, we have 
$g':=xg_ax^{-1}\in C_{\bar{g}_0a}$; so we may assume that $g_2=g'$. Finally, 
note that $g'=xg_ax^{-1}=xs_au_a x^{-1}=s_a(xu_ax^{-1})$ since $x\in 
C_\bG^\circ(s_a)$. Hence, $s_a$ also is the semisimple part of $g'$.
\end{proof}

\begin{rem} \label{scad1a} Let $\Sigma$ be an $F$-stable conjugacy class
of $\bG$. Let $\Sigma_{p'}$ be the set of semisimple parts of elements of
$\Sigma$; then $\Sigma_{p'}$ certainly is an $F$-stable conjugacy class of
$\bG$. Note that, in general, not every element in $\Sigma_{p'}^F$ is the 
semisimple part of an element in~$\Sigma^F$. But, at least, in the 
following special situation (which will be important for us later on;
see \ref{lem53} below) this will be the case. Let $g\in\Sigma^F$ and 
write $g=su=us$ where $s\in \Sigma_{p'}^F$ and $u\in \bG^F$ is unipotent.
Assume that $u$ is regular unipotent in $C_\bG^\circ(s)$. (See 
\cite[\S 12.2]{rDiMi} for general properties of regular unipotent 
elements.) We claim that, in this case, every element in $\Sigma_{p'}^F$ 
arises as the semisimple part of an element in~$\Sigma^F$. 

To see this, let $s'\in \Sigma_{p'}^F$ be arbitrary. It is known that regular 
unipotent elements always exist in connected reductive groups, and that 
they form a single conjugacy class. Thus, we can find a regular unipotent 
element $u'\in C_\bG^\circ(s')$ such that $F(u')=u'$. Since $s,s'\in 
\Sigma_{p'}$, there exists some $x\in \bG$ such that $s=xs'x^{-1}$; then 
the element $xu'x^{-1}$ is regular unipotent in $C_\bG^\circ(s)$. Hence, 
$u$ and $xu'x^{-1}$ are conjugate in $C_\bG^\circ(s)$, and so the elements 
$su$ and $s'u'$ are conjugate in $\bG$. Thus, $s'$ is the semisimple
part of $g':=s'u'\in \Sigma^F$.
\end{rem}

%%%%%%%%%%%%%%%%%%%%%%%%%%%%%%%%%%%%%%%%%%%%%%%%%%%%%%%%%%%%%%%%%%%%%%%%%%%
\section{On the subspaces $\mbox{CF}(\bG^F\mid \Sigma)$} \label{subsp}

We keep the basic assumptions of the previous section. Let us now fix an 
$F$-stable conjugacy class $\Sigma$ of $\bG$. We denote by $\CF(\bG^F \mid
\Sigma) \subseteq \CF(\bG^F)$ the subspace consisting of all class functions 
that take value~$0$ on elements outside of~$\Sigma^F$. Then the general 
aim will be to find a basis $\{f_i\}$ of $\CF(\bG^F\mid\Sigma)$ such that
\begin{itemize}
\item[$(\Sigma)_1$] the values $f_i(g)$ for $g\in \Sigma^F$ can be 
computed explicitly and
\item[$(\Sigma)_2$] the decomposition of each $f_i$ as a linear combination 
of $\Irr(\bG^F)$ is known.
\end{itemize}
Assume that we have found such a basis $\{f_i\}$. Let $g\in \Sigma^F$ and 
$C$ be the $\bG^F$-conjugacy class of~$g$. Let $\varepsilon_C\in\CF(\bG^F)$ 
be the indicator function of $C$. Using $(\Sigma)_1$, we can express 
$\varepsilon_C$ as a linear combination of the functions $f_i$. Now let 
$\rho\in\Irr(\bG^F)$. By $(\Sigma)_2$, we know the inner products $\langle 
\rho,f_i\rangle_{\bG^F}=\overline{\langle f_i,\rho\rangle}_{\bG^F}$; 
hence, $\rho(g)=|C_\bG(g)^F| \langle \rho,\varepsilon_C\rangle_{\bG^F}$ 
can be computed.

As already mentioned in the introduction, Lusztig \cite{L7} presents a 
general strategy for finding a basis $\{f_i\}$ as above, but this involves 
the delicate issue of fixing normalisations of characteristic functions of 
$F$-invariant character sheaves on $\bG$. In this section, we discuss
some special cases where this issue takes a much simpler form.

First of all, a natural candidate for a basis element of $\CF(\bG^F\mid 
\Sigma)$ is the indicator function $\varepsilon_{\Sigma^F}$ of the whole
set $\Sigma^F$. If $s\in \Sigma_{p'}^F$ (notation as in Remark~\ref{scad1a})
and $C_\bG(s)$ is not connected, then we obtain a slight refinement of 
$\varepsilon_{\Sigma^F}$ by taking the indicator function on the set of 
those elements $g\in \Sigma^F$ such that the semisimple part of $g$ is 
$\bG^F$-conjugate to~$s$. Again, it turns out that this refinement is a 
uniform function; this is the subject of the following section. Now we shall 
consider further candidates for functions in $\mbox{CF}(\bG^F\mid \Sigma)$.

\begin{abs} \label{cuspcs1} We recall some facts about (cuspidal) character 
sheaves; see Lusztig \cite{L7} and further references there. (See
also \cite[2.7.24--2.7.27]{gema} for an informal introduction.) Let $A$ 
be a character sheaf on~$\bG$ such that $F^*A \cong A$. Then the choice 
of an isomorphism $F^*A\cong A$ gives rise to a non-zero class function 
$\chi_A\in \CF(\bG^F)$ which is called a ``characteristic function'' 
of $A$; it is well-defined up to multiplication by a scalar. Assume now that
\begin{equation*}
\mbox{$\bG$ is semisimple and $A$ is a cuspidal character sheaf}.\tag{a}
\end{equation*}
Then there is a unique $F$-stable conjugacy class $\Sigma$ of $\bG$ such 
that $\chi_A(g)=0$ for all $g\in \bG^F \setminus \Sigma^F$, that is, $\chi_A
\in \CF(\bG^F\mid \Sigma)$. (This follows from the fact that $A$ is 
``clean'' \cite{L10}.) Let us fix a representative $g_0\in \Sigma^F$. Then
$A$ corresponds to an irreducible character $\psi \in \Irr(A_\bG(g_0))$ 
which is invariant under the action of $F$ on $A_\bG(g_0)$. Let us assume 
that $\psi$ is a linear character. (This assumption will be satisfied in 
all examples that we consider; it implies that $\psi$ is constant on the 
$F$-conjugacy classes of $A_\bG(g_0)$.) Then the isomorphism $F^*A\cong A$ 
can be chosen such that the values of the corresponding characteristic 
function $\chi_A$ on $\Sigma^F$ are given by 
\begin{equation*}
\chi_A(g)=q^{(\dim \bG-\dim\Sigma)/2} \psi(a) \qquad\mbox{if $g\in C_a
\subseteq \Sigma^F$ where $a\in A_\bG(g_0)$}. \tag{b}
\end{equation*}
(See \cite[4.2]{pamq} and the further references there.) We also set 
$\lambda_A:=\psi(\bar{g}_0)$ where $\bar{g}_0$ denotes the image of $g_0$ 
in $A_\bG(g_0)$. This number has the following alternative interpretation 
in terms of the twisting operator $t_1$ defined in \ref{shintani}: 
\begin{equation*}
t_1^*(\chi_A):=\chi_A\circ t_1=\lambda_A \chi_A; \qquad
\mbox{see Shoji \cite[3.3, 3.8]{S2}}. \tag{c}
\end{equation*}
Hence, $\chi_A\in \CF(\bG^F)$ is an eigenvector for the operator 
$t_1^*\colon \CF(\bG^F)\rightarrow \CF(\bG^F)$ induced by 
$t_1$, where the corresponding eigenvalue is given by~$\lambda_A$. 
\end{abs}

Thus, if $\Sigma$ is the supporting set of an $F$-invariant cuspidal
character sheaf $A$ on $\bG$, then~$\chi_A$ is a natural candidate for 
a basis element of $\CF(\bG^F\mid \Sigma)$, where the remaining
problem is to find the decomposition of $\chi_A$ as a linear combination
of $\Irr(\bG^F)$. (That problem is solved in many cases, but not in
complete generality; see \cite{L7}, \cite{pamq}). 

\begin{exmp} \label{expd4} Let $p=2$ and $\bG$ be simple of type $D_4$. 
By \cite[Prop.~19.3(d'')]{L2d} (and its proof), there is unique cuspidal
character sheaf $A_0$ on $\bG$; it is automatically $F$-invariant. 
Furthermore, the support of $A_0$ is given by $\Sigma=\cO_0$ where $\cO_0$ 
is the conjugacy class of regular unipotent elements. Let us fix a 
representative $u_0\in \cO_0^F$. By \cite[Table~8.5a (p.~126)]{LiSe}, we 
have $\dim C_\bG(u_0)=4$ and $A_\bG(u_0)\cong \Z/2\Z$. Since~$A_0$ 
corresponds to the non-trivial character of $A_\bG(u_0)$, the function 
$\chi_{A_0}\in \mbox{CF}_{\text{uni}}(\bG^F \mid \cO_0)$ (see \ref{cuspcs1}) 
is given by 
\[\chi_{A_0}(u)=\left\{\begin{array}{rl} q^2 & \quad \mbox{if $u\in
\cO_0^F$ is $\bG^F$-conjugate to $u_0$},\\ -q^2 & \quad \mbox{if $u\in 
\cO_0^F$ is not $\bG^F$-conjugate to $u_0$}.
\end{array}\right.\]
Thus, $\chi_{A_0}$ and the indicator function $\varepsilon_{\cO_0^F}$ form 
a basis of $\mbox{CF}(\bG^F\mid\cO_0)$. It remains to address the
following two issues (see Proposition~\ref{d4split} below):
\begin{itemize}
\item[1)] find the decomposition of $\chi_{A_0}$ into irreducible 
characters of $\bG^F$, 
\item[2)] specify the choice of a representative $u_0\in \cO_0^F$. 
\end{itemize}
\end{exmp}

\begin{exmp} \label{expe6} Let $p\neq 3$ and $\bG$ be simple, adjoint of
type $E_6$. By \cite[Cor.~20.4]{L2d} (and its proof) there are $6$
cuspidal character sheaves on $\bG$. They all have the same support;
it is given by an $F$-stable conjugacy class $\Sigma$ where $A_\bG(g_0)
\cong \Z/3\Z\times \Z/3\Z$ for $g_0\in \Sigma^F$. If~$F$ acts trivially
on $A_\bG(g_0)$, then $\dim \CF(\bG^F\mid \Sigma)=9$. The 
characteristic functions of the $6$ cuspidal character sheaves yield $6$
basis functions in $\CF(\bG^F\mid \Sigma)$. So we still need to 
find three further functions; these will be provided by the results in 
the following section.
\end{exmp}

%\begin{exmp} \label{expe7} Let $p\neq 2$ and $\bG$ be simple, adjoint of
%type $E_7$. By \cite[Prop.~20.3(c)]{L2d} (and its proof), there are $4$ 
%cuspidal character sheaves on $\bG$. They all have the same support;
%it is given by an $F$-stable conjugacy class $\Sigma$ where $A_\bG(g_0)
%\cong \Z/4\Z\times \Z/2\Z$ for $g_0\in \Sigma^F$ (see also \cite[4.6]{S3}). 
%If $F$ acts trivially on $A_\bG(g_0)$, then $\dim \CF(\bG^F\mid 
%\Sigma)=8$. The characteristic functions of the $4$ cuspidal character 
%sheaves yield $4$ basis functions in $\CF(\bG^F\mid \Sigma)$. 
%However, the results in the following section will only provide two further
%functions. Hence, some new methods will be required in order to deal with 
%this case (which will not be considered further in this paper).
%\end{exmp}

For $\bG$ of type $E_6$, we will also need some information about the 
subspaces $\mbox{CF}(\bL^F\mid \Sigma)$ where $\bL\subseteq \bG$ is a 
regular subgroup (as in~\ref{exp01}) and $\Sigma$ is a unipotent class 
of~$\bL$. Below we shall consider some examples which are relevant for
the discussion in Section~\ref{finale6}. First, some preparations.

\begin{abs} \label{green0} Let $\bG_{\text{uni}}$ be the set of unipotent 
elements of $\bG$ and $\mbox{CF}_{\text{uni}}(\bG^F)$ be the vector space 
of all functions $\bG_{\text{uni}}^F\rightarrow\C$ that are constant on
the $\bG^F$-conjugacy classes in $\bG_{\text{uni}}^F$. By restriction, 
we obtain a linear map $\pi_{\text{uni}}^\bG \colon \mbox{CF}(\bG^F)
\rightarrow \mbox{CF}_{\text{uni}}(\bG^F)$. It will be convenient to
regard the functions in $\mbox{CF}_{\text{uni}}(\bG^F)$ as class functions 
on all of $\bG^F$, by letting their value be $0$ on elements outside 
$\bG_{\text{uni}}^F$. With this convention, we have 
\[\pi_{\text{uni}}^\bG(f)=\gamma_{\text{uni}}^\bG{\cdot}f\qquad
\mbox{for any $f \in \mbox{CF}(\bG^F)$},\]
where the function $\gamma_{\text{uni}}^\bG\in \mbox{CF}(\bG^F)$ has been
defined in Example~\ref{pconst}. 
\end{abs}

The following two results are probably known to the experts. Since
we have not found a convenient reference, we sketch the proofs.

\begin{lem} \label{clp2} Assume that $p=2$, that the center of $\bG$ is
connected and that $\bG/Z(\bG)$ only has simple components of type $A_n$, 
$B_n$, $C_n$ or $D_n$. Then the restrictions of the unipotent characters 
of $\bG^F$ to $\bG_{\operatorname{uni}}^F$ span
$\operatorname{CF}_{\operatorname{uni}}(\bG^F)$.
\end{lem}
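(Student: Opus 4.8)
The plan is to reduce the statement to the theory of Green functions and the Springer correspondence, exploiting that in characteristic~$2$ for the classical types in question the relevant combinatorics is particularly clean. First I would observe that, since $Z(\bG)$ is connected, the unipotent characters of $\bG^F$ depend only on the type of $\bG/Z(\bG)$, and by a standard reduction to the case where $\bG$ is of the form $Z(\bG)^\circ \times \bG_1 \times \cdots \times \bG_r$ with each $\bG_i$ simple of adjoint (or simply connected) type $A_n$, $B_n$, $C_n$ or $D_n$, together with the fact that $\mbox{CF}_{\operatorname{uni}}(\bG^F)$ and the unipotent characters both behave multiplicatively with respect to direct products, it suffices to treat one simple factor at a time. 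Thus I may assume $\bG$ is simple of one of these classical types with $p=2$.

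Next I would invoke the decomposition of the space $\mbox{CF}_{\operatorname{uni}}(\bG^F)$ afforded by the generalised Springer correspondence: by Lusztig's work this space is spanned by the characteristic functions of the unipotently-supported cuspidal character sheaves induced from the various cuspidal data on regular subgroups, and each such characteristic function is, up to the normalisation issues discussed in~\ref{cuspcs1}, a $\pm$-combination of restrictions of almost characters. Since almost characters (Lusztig's nonabelian Fourier transforms of the irreducible characters) span the same space as $\Irr(\bG^F)$, the restrictions of the unipotent characters to $\bG_{\operatorname{uni}}^F$ span $\mbox{CF}_{\operatorname{uni}}(\bG^F)$ precisely when the ``Fourier transform matrix'' relating unipotent characters to unipotent almost characters, restricted to the unipotent part, is invertible. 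The point specific to $p=2$ and these classical types is that the component groups $A_\bG(u)$ attached to unipotent classes are elementary abelian $2$-groups (this is where the restriction to $A_n,B_n,C_n,D_n$ and $p=2$ enters — one rules out the exceptional-type phenomena and the characteristic-$0$ disconnected-centraliser subtleties), so the relevant Fourier matrices are genuine (real, symmetric, orthogonal) Fourier transform matrices over $\Z/2\Z$-families, hence invertible.

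Concretely, I would argue as follows: let $\mbox{CF}_{\operatorname{uni}}^{\operatorname{unif}}(\bG^F)$ be the span of the $Q_\bT^\bG$; by Example~\ref{pconst} these are uniform, and by the theory of Green functions (Lusztig, Shoji) the $Q_\bT^\bG$ together with the ``non-uniform'' classes contributed by cuspidal data span all of $\mbox{CF}_{\operatorname{uni}}(\bG^F)$. Each uniform almost character restricted to the unipotent set is a known linear combination of $Q_\bT^\bG$'s, and each non-uniform unipotent almost character restricted to the unipotent set is (up to sign, using cleanness \cite{L10}) the characteristic function of a cuspidal character sheaf with unipotent support. The inverse Fourier transform then writes each unipotent character's restriction as a combination of these, and conversely; invertibility of the (block-diagonal, one block per Lusztig family) transform matrix — which holds because for $p=2$ in types $A$--$D$ every family is of the shape governed by an elementary abelian $2$-group, for which the Fourier matrix is the standard orthogonal one — yields that the unipotent characters restricted to $\bG_{\operatorname{uni}}^F$ span the whole space.

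The main obstacle, and the step I would spend the most care on, is pinning down exactly which unipotent classes carry non-uniform information and checking that the corresponding almost characters are themselves unipotent characters (not mixed with non-unipotent series) — i.e. that restriction to the unipotent set does not collapse the span. For type $D_n$ in characteristic~$2$ this is where I would appeal to the explicit description of the generalised Springer correspondence and of cuspidal character sheaves of classical groups in bad characteristic (Lusztig \cite{L2d}, and Shoji's computations), together with the fact, used later in the paper for $D_4$ and $D_5$, that the Green functions in characteristic~$2$ for these groups are known explicitly. Once it is granted that the full set of unipotently-supported character sheaves is accounted for by unipotent almost characters and that the Fourier matrices are invertible, the spanning statement follows formally; so the real content is the bookkeeping of cuspidal data and Springer correspondence in the bad-characteristic classical case, which I would organise type-by-type.
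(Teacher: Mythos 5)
Your proposal and the paper's proof are genuinely different arguments, and yours has a gap at the decisive step.

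The paper's argument is short and entirely elementary, resting on two facts. First, because $p=2$, the centre is connected and the simple components are of type $A$--$D$, the centraliser $C_{\bG^*}(s)$ of every semisimple element in the dual group is a \emph{regular} subgroup (a Levi), so one is in the ``Levi case'' of Jordan decomposition: every $\rho\in\Irr(\bG^F)$ has the form $\rho=\pm R_{\bL\subseteq\bP}^\bG(\lambda\cdot\psi)$ with $\lambda$ a $p$-constant linear character and $\psi$ unipotent. Second, both $\gamma_{\text{uni}}^\bG$ and $\lambda$ are $p$-constant, so by \cite[Prop.~3.3.16]{gema} one may commute multiplication by a $p$-constant function with $R_{\bL\subseteq\bP}^\bG$, obtaining
$\pi_{\text{uni}}^\bG(\rho)=\pm\pi_{\text{uni}}^\bG\bigl(R_{\bL\subseteq\bP}^\bG(\psi)\bigr)$,
and $R_{\bL\subseteq\bP}^\bG(\psi)$ is a combination of unipotent characters. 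This immediately reduces the restriction of \emph{any} $\rho$ to a combination of restrictions of unipotent characters, which is the whole content of the lemma. Note that the hypothesis on the type and on $p$ enters exactly once, to guarantee that $C_{\bG^*}(s)$ is always a Levi; no counting, Springer theory or character-sheaf machinery is needed.

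Your route through the generalised Springer correspondence, cuspidal character sheaves and non-abelian Fourier matrices is heavier and, as written, does not close. The sticking point is the claim that ``the restrictions of the unipotent characters span precisely when the Fourier transform matrix relating unipotent characters to unipotent almost characters is invertible.'' Invertibility of the Fourier matrix on the unipotent families only shows that the unipotent characters and the unipotent almost characters have the \emph{same} span after restriction to $\bG^F_{\text{uni}}$; it does not by itself show that this common span is all of $\operatorname{CF}_{\operatorname{uni}}(\bG^F)$. For that you would additionally need to know that the characteristic functions of \emph{all} character sheaves with unipotent support are accounted for by unipotent almost characters, i.e.\ that the contributions coming from non-unipotent $\rho$ are redundant. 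But that redundancy is precisely what the lemma asserts, and without the Jordan-decomposition/$p$-constancy argument you are left either asserting it or falling back on a dimension count --- that the number of unipotent characters equals $\dim\operatorname{CF}_{\operatorname{uni}}(\bG^F)$ and hoping for linear independence. That count is Proposition~\ref{clp2b}, but the paper \emph{derives} Proposition~\ref{clp2b} from the present lemma, so using it here would be circular. Your subsidiary remarks (component groups being elementary abelian $2$-groups, multiplicativity over simple factors) are correct but do not repair this gap: they bear on the structure of the Fourier matrices and on the reduction to one simple factor, not on why non-unipotent characters carry no new information on $\bG^F_{\text{uni}}$.
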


\begin{proof} Clearly, $\mbox{CF}_{\text{uni}}(\bG^F)$ is spanned by the 
functions $\pi_{\text{uni}}^\bG(\rho)$ for $\rho \in \Irr(\bG^F)$. We must 
show that it is sufficient to take $\pi_{\text{uni}}^\bG(\rho)$ for 
$\rho \in \Irr(\bG^F)$ unipotent. The main point of the argument is that 
our assumptions on $\bG$ imply that $C_{\bG^*}(s)$ is a regular subgroup
for every semisimple element $s\in {\bG^*}^{F^*}$, where $\bG^*$ is a
group dual to $\bG$ with dual Frobenius map $F^*\colon \bG^*\rightarrow 
\bG^*$. (This follows from \cite[\S 2.2, \S 2.3]{bs1}, 
and this has already been used in the proof of \cite[8.7]{Lu1}.) 
Hence, we are in the ``Levi case'' of Lusztig's Jordan decomposition of 
characters; see \cite[\S 11.4]{rDiMi}, \cite[\S 3.3]{gema}. This means
that every $\rho \in \Irr(\bG^F)$ is of the form $\rho=\pm R_{\bL\subseteq 
\bP}^\bG(\lambda{\cdot}\psi)$ where $\bL$ is an $F$-stable Levi complement 
of a parabolic subgroup $\bP \subseteq \bG$, $\lambda\colon \bL^F
\rightarrow \C^\times$ is a $p$-constant linear character, and $\psi 
\in \Irr(\bL^F)$ is a unipotent character. Now recall from \ref{green0}
that $\pi_{\text{uni}}^\bG(\rho)=\gamma_{\text{uni}}^\bG{\cdot}\rho$. 
Since $\gamma_{\text{uni}}^\bG$ is $p$-constant, we have 
\[ \pi_{\text{uni}}^\bG(\rho)=\gamma_{\text{uni}}^\bG{\cdot}\rho= \pm 
\gamma_{\text{uni}}^\bG{\cdot} R_{\bL\subseteq \bP}^\bG\bigl(\lambda{\cdot} 
\psi\bigr)=\pm R_{\bL\subseteq \bP}^\bG\bigl(\gamma_{\text{uni}}^\bL{\cdot} 
(\lambda{\cdot} \psi)\bigr);\]
see \cite[Prop.~3.3.16]{gema} for the last equality. Since $\lambda$ is 
also $p$-constant, we certainly have $\gamma_{\text{uni}}^\bL{\cdot}
(\lambda{\cdot} \psi)=\gamma_{\text{uni}}^\bL {\cdot} \psi$. Hence, 
using once more \cite[Prop.~3.3.16]{gema}, we conclude that 
\[ \pi_{\text{uni}}^\bG(\rho)=\pm R_{\bL\subseteq \bP}^\bG\bigl(
\gamma_{\text{uni}}^\bL{\cdot} \psi \bigr)=\pm \gamma_{\text{uni}}^\bG
{\cdot} R_{\bL\subseteq \bP}^\bG(\psi)=\pm \pi_{\text{uni}}^\bG\bigl(
R_{\bL\subseteq \bP}^\bG(\psi)\bigr).\]
This completes the proof, since it is known that $R_{\bL\subseteq 
\bP}^\bG(\psi)$ is a linear combination of unipotent characters of
$\bG^F$; see \cite[Prop.~3.3.20]{gema}.
\end{proof}

%(Note that $Z(\bL)$ is again 
%connected and that $\bL/Z(\bL)$ only has simple 
%components of type $A_n$, $B_n$, $C_n$ or $D_n$.) Assume first that 
%$\bL\subsetneqq \bG$. Then, by an inductive argument, we have 
%$\cE_{\text{uni}}(\bL^F)= \mbox{CF}_{\text{uni}}(\bL^F)$. 

\begin{prop} \label{clp2b} Let $p=2$ and $\bG$ be simple of type
$A_n$, $B_n$, $C_n$ or $D_n$. Then the restrictions of the unipotent 
characters of $\bG^F$ to $\bG_{\operatorname{uni}}^F$ form a basis of 
$\operatorname{CF}_{\operatorname{uni}}(\bG^F)$.
\end{prop}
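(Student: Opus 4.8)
The plan is to combine Lemma~\ref{clp2} with a dimension count. Note first that $\dim\operatorname{CF}_{\operatorname{uni}}(\bG^F)$ equals the number of $\bG^F$-conjugacy classes of unipotent elements, while the functions $\pi_{\text{uni}}^\bG(\rho)$, for $\rho\in\Irr(\bG^F)$ unipotent, span a subspace of dimension at most the number of unipotent characters of $\bG^F$. So the proposition will follow once we establish:
\begin{itemize}
\item[(i)] the $\pi_{\text{uni}}^\bG(\rho)$, $\rho$ unipotent, span $\operatorname{CF}_{\operatorname{uni}}(\bG^F)$; and
\item[(ii)] the number of unipotent classes of $\bG^F$ equals the number of unipotent characters of $\bG^F$.
\end{itemize}
Indeed, (i) gives that the number of distinct functions among the $\pi_{\text{uni}}^\bG(\rho)$ is $\geq\dim\operatorname{CF}_{\operatorname{uni}}(\bG^F)$, while it is trivially at most the number of unipotent characters, which by (ii) equals $\dim\operatorname{CF}_{\operatorname{uni}}(\bG^F)$; hence these functions are pairwise distinct, linearly independent, and form a basis.

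For (i) I would first arrange that the centre $Z(\bG)$ is connected, so as to be in the situation of Lemma~\ref{clp2}. For $p=2$ this is harmless for the classical types under consideration: in types $B_n,C_n$ the centre is trivial, in type $D_n$ it is an infinitesimal (hence connected) group scheme, and in type $A_n$ one passes to the adjoint group. Such a passage changes neither the unipotent variety, nor the partition of $\bG_{\operatorname{uni}}^F$ into $\bG^F$-classes, nor the unipotent characters together with their values on unipotent elements --- the relevant isogeny is purely inseparable and induces an isomorphism on $\F_q$-rational points, while a connected central subgroup has no effect on the conjugation of unipotent elements and the unipotent characters of the given group are exactly the restrictions of those of the group with connected centre. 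Since $\bG/Z(\bG)$ is still simple of classical type, Lemma~\ref{clp2} now yields (i).

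The essential point is (ii), and this is where the hypothesis $p=2$ really enters. For type $A_n$ it is immediate: both numbers equal the number of partitions of $n+1$ (centralisers of unipotent elements in the adjoint group being connected). For types $B_n,C_n,D_n$ one appeals to the explicit combinatorics on both sides: by Lusztig's classification the unipotent characters are parametrised by a set of symbols independent of $q$ and of the isogeny type, whereas in characteristic~$2$ the unipotent classes of $Sp_{2n}$, $SO_{2n+1}$ and $SO_{2n}^{\pm}$ are classified, following Wall and Spaltenstein, by suitable signed partitions; one then checks (conveniently by comparing the corresponding generating functions) that the two counts agree. This equality is the \emph{crux} of the argument, and it is special to $p=2$: in odd characteristic these groups have strictly fewer unipotent classes than unipotent characters. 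Granting (i) and (ii), the proposition follows as in the first paragraph.
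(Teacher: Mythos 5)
Your overall strategy coincides with the paper's: step (i) is Lemma~\ref{clp2}, and step (ii) is the assertion that the number of unipotent $\bG^F$-classes equals the number of unipotent characters, which the paper simply cites from Lusztig \cite[7.14, 8.7]{Lu1} (the same result you propose to re-derive from the Wall--Spaltenstein parametrisations via generating functions). The gap in your argument is the reduction to connected centre in type $A_n$. The isogeny $SL_{n+1}\to PGL_{n+1}$ is \emph{not} purely inseparable when $p=2$ unless $n+1$ is a power of $2$ (its kernel $\mu_{n+1}$ has a nontrivial \'etale part), it does not induce an isomorphism on $\F_q$-points, and it genuinely changes the partition of $\bG_{\operatorname{uni}}^F$ into $\bG^F$-classes, because centralisers of unipotent elements in $SL_{n+1}$ can be disconnected whereas in $PGL_{n+1}$ they are connected. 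Concretely, take $\bG=SL_3$ and $q=4$: the regular unipotent class of $\bG$ has component group $\Z/3\Z$ on which $F$ acts trivially, so $\bG^F$ has $5$ unipotent classes but only $3$ unipotent characters, and the statement itself fails. So the clause claiming that passing to the adjoint group ``changes neither the partition of $\bG_{\operatorname{uni}}^F$ into $\bG^F$-classes'' nor the relevant counts is not correct.

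To be fair, this is a latent defect of the statement rather than only of your proof: the paper's one-line argument also quietly presupposes connected centre, since Lemma~\ref{clp2} requires it and the cited results of Lusztig concern symplectic and orthogonal groups. For $\bG$ simple of type $B_n$, $C_n$ or $D_n$ with $p=2$, the (reduced) centre is automatically trivial, because the fundamental group has order a power of $2$ and so the centre of any isogeny type is infinitesimal in characteristic~$2$; the paper only applies the proposition in types $D_4$ and $D_5$, where there is no issue. Your proof becomes correct if you either add the hypothesis that $Z(\bG)$ is connected, or restrict the type-$A_n$ case to groups such as $GL_{n+1}$ or $PGL_{n+1}$ where centralisers of unipotent elements are connected.
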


\begin{proof} Clearly, $\dim \operatorname{CF}_{\operatorname{uni}}(\bG^F)$
is equal to the number of $\bG^F$-conjugacy classes of unipotent elements
of $\bG^F$. By \cite[7.14, 8.7]{Lu1}, the latter number is also equal to 
the number of unipotent characters of $\bG^F$. So the assertion follows
from Lemma~\ref{clp2}.
\end{proof}

\begin{abs} \label{lfrom} Let $\cO$ be an $F$-stable unipotent class
of $\bG$ and assume that $\cO^F$ splits into conjugacy classes $C_1,
\ldots C_r$ of $\bG^F$. The following discussion will be helpful in 
distinguishing these $\bG^F$-classes. Let $\bB\subseteq \bG$ be an 
$F$-stable Borel subgroup and $\bW$ be the Weyl group of $\bG$ with respect 
to an $F$-stable maximal torus contained in $\bB$. Let $w\in \bW^F$ and 
consider the Bruhat cell $\bB w\bB$. The cardinality $|\cO^F\cap \bB^Fw
\bB^F|$ can be explicitly determined in terms of a formula involving 
the Green functions of $\bG^F$, Lusztig's non-abelian Fourier matrices and 
the character values of the Hecke algebra of $\bW^F$; see \cite[1.2]{Lfrom} 
for details. (In {\sf CHEVIE}, this can be done as explained in 
\cite[\S 6]{gap3jm}.) Assume now that $\cO \cap \bB w\bB\neq \varnothing$.
Then~---at least in certain favorable situations (see below for 
examples)~---~the notation can be chosen such that 
\begin{equation*}
C_1\cap \bB^Fw\bB^F\neq \varnothing \qquad\mbox{and} \qquad 
C_i\cap \bB^Fw\bB^F=\varnothing \quad\mbox{for $i=2,\ldots,r$}.\tag{a}
\end{equation*}
Consequently, we have $|C_1\cap \bB^F w\bB^F|=|\cO^F\cap \bB^F w \bB^F|$,
and the right hand is known; furthermore, $|C_i\cap \bB^F w\bB^F|=0$ for
$i=2,\ldots,r$. On the other hand, $|C_i\cap \bB^F w \bB^F|$ can also be 
expressed in terms of a formula involving the values of certain unipotent 
characters of $\bG^F$ and, again, the character values of the Hecke algebra 
of~$\bW^F$. If not yet all values of the unipotent characters of $\bG^F$ on 
$C_1,\ldots,C_r$ are known, then that formula yields $r$ linear relations 
for the unknown values. (This argument has already been used, for example, 
in the proof of \cite[Prop.~6.5]{pamq}; see also Hetz \cite{Het3} and 
further references there.)
\end{abs}

For groups of small rank ($\leq 7$, say), the Green functions $Q_\bT^\bG$ 
in Example~\ref{pconst} typically span a significant subspace of 
$\mbox{CF}_{\text{uni}}(\bG^F)$, and the values of $Q_\bT^\bG$ can be 
explicitly computed using the methods in \cite{ekay} (especially for the
case where~$q$ is a power of a small prime~$p$). In the examples below, we
will assume that tables with the values of the functions $Q_\bT^\bG$ are
known (but we will not print these tables). 

We shall now consider groups $\bG$ of type $D_n$, where we use the following 
labelling of simple roots in a root system of $\bG$:
\begin{center}
\begin{picture}(280,42)
\put( 15,22){$D_n$}
\put( 15,8){\scriptsize ($n{\geq} 4$)}
\put(132, 3){$\alpha_2$}
\put( 91,33){$\alpha_1$}
\put(121,33){$\alpha_3$}
\put(151,33){$\alpha_4$}
\put(211,33){$\alpha_n$}
\put(125, 5){\circle*{5}}
\put( 95,25){\circle*{5}}
\put(125,25){\circle*{5}}
\put(155,25){\circle*{5}}
\put(215,25){\circle*{5}}
\put(125,25){\line(0,-1){20}}
\put( 95,25){\line(1,0){30}}
\put(125,25){\line(1,0){30}}
\put(155,25){\line(1,0){12}}
\put(177,25){\circle*{2}}
\put(185,25){\circle*{2}}
\put(193,25){\circle*{2}}
\put(203,25){\line(1,0){12}}
\end{picture}
\end{center}
In this case, the unipotent characters of $\bG^F$ are 
parametrized by (equivalence classes of) certain symbols of rank~$n$; 
see Lusztig \cite[Theorem~8.2]{Lu1}. Recall from \cite[\S 3]{Lu1} that
symbols are unordered pairs $\Lambda=(S,T)$ of finite subsets of
$\Z_{\geq 0}$. The rank of $\Lambda$ is defined as in \cite[2.6.4]{Lu1}, 
the defect of $\Lambda$ is the absolute value of $|S|-|T|$. We denote the 
unipotent character corresponding to $\Lambda=(S,T)$ by $\rho_{(S,T)}$.
%There is an equivalence 
%relation on the set of all symbols defined by a shift operation; each 
%equivalence class contains a unique representative $(S,T)$ such that 
%$0 \not\in S\cap T$. 

%There is an equivalence 
%relation on the set of all symbols defined by a shift operation; each 
%equivalence class contains a unique representative $(S,T)$ such that 

%Our task will then be to find a basis of 
%$\mbox{CF}_{\text{uni}}^\perp (\bG^F)$, the orthogonal complement inside 
%$\mbox{CF}_{\text{uni}}(\bG^F)$.

\begin{abs} \label{d4intro} Let $p=2$ and $\bG$ be simple of type $D_4$.

(a) Assume that $\bG^F=D_4(q)$ (split type). By \cite[Table~8.5a 
(p.~126)]{LiSe}, there are $12$ unipotent classes in~$\bG$, which split 
into~$14$ unipotent classes in~$\bG^F=D_4(q)$. Hence, we also have 
$\dim\mbox{CF}_{\text{uni}}(\bG^F)=14$. The unipotent characters of
$\bG^F$ are parametrized by symbols of rank $4$ and defect $d\equiv 0 
\bmod 4$, where each symbol of the form $(S,S)$ is repeated twice. We
define a class function $f_0\in \mbox{CF}(\bG^F)$ by 
\[ \textstyle f_0:=\frac{1}{2}(\rho_{(13,02)}-\rho_{(23,01)}-\rho_{(12,03)}
+\rho_{(0123,-)})\]
(where $(013,2)$ stands for the symbol $\Lambda=(\{0,1,3\},\{2\})$ etc.)

(b) Assume that $\bG^F={^2\!D}_4(q)$ (twisted type). Referring again to 
\cite[Table~8.5a (p.~126)]{LiSe}, we see that only~$8$ out of the $12$ 
unipotent classes of $\bG$ are $F$-stable; these split into $10$ unipotent 
classes in $\bG^F={^2\!D}_4(q)$. Hence, we also have $\dim
\mbox{CF}_{\text{uni}}(\bG^F)=10$. Now the unipotent characters of $\bG^F$ 
are parametrized by symbols of rank $4$ and defect $d\equiv 2 \bmod 4$. We
define a class function $f_0\in \mbox{CF}(\bG^F)$ by 
\[ \textstyle f_0:=\frac{1}{2}(\rho_{(123,0)} -\rho_{(012,3)}
+\rho_{(013,2)} -\rho_{(023,1)}.\]
The significance of these definitions is as follows. By Main Theorem~4.23 
of \cite{LuB}, the decomposition of each generalised character $R_{\bT,
1}^\bG$ as a linear combination of unipotent characters is known. (In
{\sf CHEVIE} \cite{gap3jm}, this information is readily available via the 
functions {\tt UnipotentCharacters} and {\tt DeligneLusztigCharacter}; 
see also \cite[\S 2.4]{gema}.) For $f_0$ as defined above, we find that 
$\langle f_0,R_{\bT,1}^\bG \rangle_{\bG^F}=0$ for all~$\bT$; furthermore, 
every unipotent character of~$\bG^F$ is uniquely a linear combination of 
$f_0$ and the various $R_{\bT,1}^\bG$. Hence, in order to determine the 
values of the unipotent characters on unipotent elements (and even on 
all elements of $\bG^F$), it remains to determine the values of~$f_0$.
\end{abs}

In the split case, the following result is a very special case
of Shoji \cite[Theorem~6.2]{S4}.

\begin{prop} \label{d4split} In the setting of \ref{d4intro}, let $\cO_0$ 
be the conjugacy class of regular unipotent elements of~$\bG$ 
and $u_0:=x_{\alpha_1}(1)x_{\alpha_2}(1)x_{\alpha_3}(1)x_{\alpha_4}(1)\in 
\cO_0^F$. Then, for any $g \in \bG^F$, we have 
\[ f_0(g)=\left\{\begin{array}{cl} q^2 & \quad\mbox{if $g \in \cO_0^F$
and $g$ is $\bG^F$-conjugate to $u_0$},\\
-q^2 & \quad\mbox{if $g \in \cO_0^F$ and $g$ is not $\bG^F$-conjugate to
$u_0$},\\ 0 & \quad \mbox{otherwise}.\end{array}\right.\]
Thus, we have $f_0=\chi_{A_0}$ where $\chi_{A_0}$ is the 
characteristic function in Example~\ref{expd4}.
\end{prop}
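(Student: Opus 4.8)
The plan is to identify $f_0$ with the characteristic function $\chi_{A_0}$ of the unique cuspidal character sheaf $A_0$ on $\bG$ (of type $D_4$, $p=2$, split case), and then read off the values from Example~\ref{expd4}. The key structural facts are: (i) $f_0$ is, by construction in \ref{d4intro}, orthogonal to every $R_{\bT,1}^\bG$ and lies in the span of the unipotent characters; in particular $f_0$ is a \emph{non-uniform} unipotent class function, and it spans the orthogonal complement of the uniform unipotent class functions inside the span of the unipotent characters. (ii) By \cite[Prop.~19.3(d'')]{L2d}, $A_0$ is the unique cuspidal character sheaf on $\bG$, its support is the regular unipotent class $\cO_0$, and $\chi_{A_0}\in\mbox{CF}_{\text{uni}}(\bG^F\mid\cO_0)$ with the explicit values recorded in Example~\ref{expd4}. (iii) By Lusztig's theory, the transition between characteristic functions of $F$-stable character sheaves and irreducible characters is ``block-diagonal'' with respect to families, and for unipotent characters the non-uniform part of $\chi_{A_0}$ must be (up to scalar and root of unity) the unique non-uniform unipotent class function just described. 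So up to a scalar of absolute value $1$, $f_0=\chi_{A_0}$, and what remains is to pin down that scalar and the choice of $u_0$.

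First I would verify the numerology: in the split $D_4$, $p=2$ case, $\dim\mbox{CF}_{\text{uni}}(\bG^F)=14$ (fourteen unipotent classes in $\bG^F$), while there are $14$ unipotent characters; among these, the uniform ones span a subspace of dimension $13$ (this is where the ``doubled'' symbol $(S,S)$ matters — the two copies of $\rho_{(01,01)}$-type symbols differ only in their non-uniform part), so the orthogonal complement is $1$-dimensional and $f_0$ spans it. On the other side, $\chi_{A_0}$ restricted to $\bG_{\text{uni}}^F$ is supported only on $\cO_0^F$, which splits into two $\bG^F$-classes, and $\chi_{A_0}$ takes the two opposite values $\pm q^2$ on them; in particular $\langle\chi_{A_0},\chi_{A_0}\rangle_{\bG^F}$ and the uniform projection of $\chi_{A_0}$ can both be computed from the Green-function data, confirming that $\chi_{A_0}$ also lies in (and spans) that same $1$-dimensional complement. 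Hence $f_0=\zeta\,\chi_{A_0}$ for some scalar $\zeta$, and comparing norms (both functions have the same $L^2$-norm by the unitarity of Lusztig's Fourier transform on the relevant family, or directly: $\langle f_0,f_0\rangle_{\bG^F}=1$ by the coefficients $\pm\frac12$ and the orthonormality of the $\rho$'s, matched against an explicit count for $\chi_{A_0}$) forces $|\zeta|=1$; that $\zeta$ is in fact a genuine scalar and not merely of modulus one will follow once we know $f_0$ is real-valued.

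The next step is to cite \cite[Theorem~6.2]{S4} (Shoji), of which the present statement is declared a special case: Shoji's theorem gives precisely the decomposition of the characteristic function of a cuspidal character sheaf of a classical group into unipotent characters, in the normalisation where the value on a fixed regular unipotent representative is $+q^2$. So the content to extract from Shoji is the \emph{exact} linear combination, i.e.\ that $\chi_{A_0}=\frac12(\rho_{(13,02)}-\rho_{(23,01)}-\rho_{(12,03)}+\rho_{(0123,-)})$ with these specific signs, which is exactly the defining formula for $f_0$; this simultaneously fixes $\zeta=1$ and tells us which of the two $\bG^F$-classes in $\cO_0^F$ carries the value $+q^2$. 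To make part~2) of Example~\ref{expd4} concrete — that this preferred class contains $u_0=x_{\alpha_1}(1)x_{\alpha_2}(1)x_{\alpha_3}(1)x_{\alpha_4}(1)$ — I would trace through Shoji's normalisation, which is stated in terms of the split Jordan-decomposition representative of the regular unipotent class, together with the standard identification of the two $\bG^F$-classes of regular unipotent elements via the component group $A_\bG(u_0)\cong\Z/2\Z$ acting through the long-element/graph-automorphism twist; the representative $u_0$ written as a product of the four simple-root one-parameter subgroup elements is the ``canonical'' split one, and it is the one for which Shoji's Fourier-transform sign conventions assign $+1$ to the trivial local system (equivalently $+q^2$ to $\chi_{A_0}$).

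The main obstacle I anticipate is \textbf{not} the existence of the scalar relation $f_0=\zeta\chi_{A_0}$ — that is essentially forced by dimension count plus cleanness of $A_0$ — but rather the \emph{bookkeeping of signs and of the choice of representative}, i.e.\ checking that the particular signs $+,-,-,+$ in the definition of $f_0$ match Shoji's normalised decomposition and that the $+q^2$-class is the one of $u_0$ and not its twist. This requires care with three interlocking conventions: (a) Lusztig's parametrisation of cuspidal character sheaves on $D_4$ and the associated local system on $\cO_0$; (b) Shoji's choice of isomorphism $F^*A_0\xrightarrow{\sim}A_0$ and hence of the scalar normalising $\chi_{A_0}$; and (c) the combinatorial labelling of the two $\bG^F$-classes inside $\cO_0^F$ (by elements of $A_\bG(u_0)$, cf.\ \ref{rem00}) and its translation into the split representative $u_0$. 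Assembling these is routine in principle but is precisely where one must be meticulous; everything else (orthogonality of $f_0$ to the $R_{\bT,1}^\bG$, the dimension count, cleanness giving the support statement) is either already in \ref{d4intro} and Example~\ref{expd4} or is a direct consequence of the cited results.
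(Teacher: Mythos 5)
The core gap: your proposal treats only the split case $\bG^F=D_4(q)$, but Proposition~\ref{d4split} is stated ``in the setting of \ref{d4intro}'', which covers \emph{both} $D_4(q)$ and ${}^2\!D_4(q)$, and the function $f_0$ is defined separately in the two cases. For the split case, the paper's own proof is essentially what you propose: it simply cites prior work (\cite[Example~4.4]{padua}, or \cite{S4}) which already establishes $f_0=\chi_{A_0}$; your version is a somewhat more verbose reconstruction of the same reference. That part is fine. But the paper then explicitly points out that ``in \cite{padua} and in \cite{S4}, only groups of split type are considered'', so the citation to Shoji's Theorem~6.2 that you lean on is unavailable in the twisted case. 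Your dimension count (14 unipotent classes, 13 Green functions) and your remarks about the doubled symbol $(S,S)$ are also split-only; in the twisted case one has 10 unipotent classes, 9 Green functions (indexed by $F$-conjugacy classes of $\bW$), and no repeated symbol. None of this appears in your write-up.

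The paper fills the twisted-case gap by a different, more elementary route that never touches character-sheaf normalisation directly: it introduces the function $f_0'$ with the target values $\pm q^2$ on $\cO_0^F$ and $0$ elsewhere, checks $\langle f_0', Q_\bT^\bG\rangle_{\bG^F}=0$ for all $\bT$ so that the Green functions together with $f_0'$ form a basis of $\operatorname{CF}_{\operatorname{uni}}(\bG^F)$, uses Proposition~\ref{clp2b} to conclude that $\pi_{\operatorname{uni}}^\bG(f_0)$ is nonzero and orthogonal to all Green functions (hence $\pi_{\operatorname{uni}}^\bG(f_0)=c\,f_0'$ for some $c\neq 0$), and then pins down $c=1$ via the Bruhat-cell count from \ref{lfrom} combined with Hetz's criterion \cite[Lemma~7.2]{Het4} showing that $u_0$ has a conjugate in $\bB^Fw_c\bB^F$ for a Coxeter element $w_c$. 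Finally, the norm identity $\langle f_0,f_0\rangle_{\bG^F}=1=\langle f_0',f_0'\rangle_{\bG^F}$ forces $f_0$ to vanish on non-unipotent elements. If you want your proposal to stand as a proof of the proposition as stated, you need to supply an argument of this kind (or otherwise extend Shoji's result to the twisted case), and you need to make the choice of $u_0$ concrete via a Bruhat-cell criterion rather than the heuristic ``canonical split representative'' you appeal to.

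A smaller point: in the split case your reasoning that $\zeta$ is ``a genuine scalar'' once $f_0$ is real-valued actually only pins $\zeta$ down to $\pm1$; the sign still has to come either from the cited Shoji decomposition (as you eventually invoke) or from the same Bruhat-cell count the paper uses for the twisted case. It is worth being explicit that this is where the sign is fixed, rather than treating it as bookkeeping.
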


\begin{proof} Assume first that $\bG^F=D_4(q)$. As already discussed in 
\cite[Example~4.4]{padua} (or in \cite{S4}), we do have $f_0=\chi_{A_0}$ 
in this case. Now let $\bG^F={^2\!D}_4(q)$. Then we need a different 
argument since, in \cite{padua} and in \cite{S4}, only groups of split 
type are considered. (The following argument would also work, almost 
verbatim, for the split case.) As already mentioned, $\cO_0^F$ splits 
into two classes in~$\bG^F$, with centraliser orders $2q^4,2q^4$; let 
$u_0,u_1\in \cO_0^F$ (with $u_0$ as above) be representatives of these
two classes. We define $f_0'\in \mbox{CF}_{\text{uni}}(\bG^F)$ by
\[f_0'(u_0):=q^2, \qquad f_0'(u_1):=-q^2 \qquad \mbox{and} \qquad f_0'(u)
:=0 \quad \mbox{for $u \in \bG_{\text{uni}}^F\setminus \cO_0^F$}.\]
(Note that $f_0'=\chi_{A_0}$, but the following argument works without
reference at all to the theory of character sheaves.) As usual, we regard 
$f_0'$ as a function on all of $\bG^F$ by letting its value be~$0$ on 
non-unipotent elements; then $\langle f_0',f_0' \rangle_{\bG^F}=1$. We 
must show that $f_0=f_0'$. 

Now, the distinct Green functions $Q_\bT^\bG$ are parametrized by the
$F$-conjugacy classes of the Weyl group~$\bW$. (See, e.g., 
\cite[\S 2.3]{gema}.) One checks that there are $9$ such $F$-conjugacy 
classes. Hence, since distinct Green functions are linearly independent and 
since $\dim \mbox{CF}_{\text{uni}}(\bG^F)=10$, the Green functions span a 
subspace of co-dimension~$1$ in $\mbox{CF}_{\text{uni}}(\bG^F)$. Using the 
knowledge of the values of the Green functions, we find that $\langle f_0',
Q_\bT^\bG \rangle_{\bG^F}=0$ for all~$\bT$ (where, as usual, we regard each 
$Q_\bT^\bG$ as a function on all of~$\bG^F$ with value~$0$ on non-unipotent 
elements). Thus, we have:
\begin{equation*}
\mbox{The Green functions $Q_\bT^\bG$ together with $f_0'$ form a basis of 
$\mbox{CF}_{\text{uni}}(\bG^F)$}. \tag{a}
\end{equation*}
Next recall from \ref{d4intro} that $\langle f_0,R_{\bT,1}^\bG\rangle_{\bG^F}
=0$ for all~$\bT$. Since $f_0$ is a linear combination of unipotent 
characters, it follows that $f_0$ is orthogonal to all uniform functions 
on $\bG^F$. So, by Example~\ref{pconst}, we have $\langle f_0,Q_\bT^\bG \rangle_{\bG^F}=0$ for all $\bT$. On the 
other hand, by Proposition~\ref{clp2b}, the restriction of $f_0$ to 
$\bG_{\text{uni}}^F$ must be non-zero. Hence, using (a) and the fact 
that $\langle f_0',Q_{\bT}^\bG\rangle_{\bG^F}=0$ for all $\bT$, we deduce
that $\pi_{\text{uni}}^\bG(f_0)=cf_0'$ for some $0\neq c\in \C$. In order
to determine the scalar~$c$, we use the discussion in \ref{lfrom}. 
For $i=0,1$ let $C_i$ be the $\bG^F$-conjugacy class of~$u_i$; 
thus, $\cO_0^F=C_0\cup C_1$. Let $w_c\in \bW$ be a Coxeter element, such
that $F(w_c)=w_c$. As discussed in \cite[Example~4.9]{pamq}, we have 
\begin{equation*}
|\cO_0^F \cap \bB^F w_c \bB^F|=|\bB^F|\qquad \mbox{and} \qquad 
C_i\cap \bB^F w_c \bB^F= \varnothing \quad\mbox{for $i=0$ or $i=1$}.\tag{b}
\end{equation*}
By the argument of Hetz \cite[Lemma~7.2]{Het4} (which is based on Lusztig
\cite[2.4]{Lfrom}), one sees that a certain $\bG^F$-conjugate of $u_0$ 
is contained in $\bB^F w_c \bB^F$. Hence, $C_0\cap \bB^F w_c \bB^F\neq 
\varnothing$. Using~(b), we deduce that $|C_0\cap \bB^Fw_c\bB^F|=|\bB^F|$. 
As explained in \ref{lfrom}, this yields a linear relation involving the 
scalar~$c$; by explicitly working out the terms in that relation (we omit 
the details), we find that $c=1$. Thus, $\pi_{\text{uni}}^\bG(f_0)=f_0'$. 
Finally, since $\langle f_0,f_0\rangle_{\bG^F}=\langle f_0',f_0'
\rangle_{\bG^F}=1$, we can also conclude that $f_0(g)=0$ for all
non-unipotent elements $g\in \bG^F$; thus, $f_0=f_0'$ as desired.
\end{proof}

We shall also need an analogous result for groups of type $D_5$. In order 
to avoid too much interruption of the current discussion, this will 
be done in an Appendix (see Section~\ref{appdx}).

%%%%%%%%%%%%%%%%%%%%%%%%%%%%%%%%%%%%%%%%%%%%%%%%%%%%%%%%%%%%%%%%%%%%%%%%%%%
\section{On non-simply-connected semisimple groups} \label{scad}

Assume that $\bG$ is semisimple. If $\bG$ is not of simply connected type, 
then it may happen that the commutator subgroup of $\bG^F$ is strictly 
smaller than $\bG^F$; furthermore, the centraliser of a semisimple element
of $\bG$ may not be connected. In this section we provide some tools for 
dealing with this situation; this is mostly based on Steinberg \cite{St68}. 
For this purpose, it will be convenient to also fix a connected reductive 
algebraic group~$\tbG$ over $k$ and a surjective homomorphism $\pi\colon 
\tbG \rightarrow \bG$, with the following properties:
\begin{itemize}
\item There is a Frobenius map $\tF\colon \tbG \rightarrow \tbG$ 
such that $F\circ \pi= \pi\circ \tF$;
\item the kernel $\ker(\pi)$ is connected and contained in the center
$Z(\tbG)$ of $\tbG$;
\item the derived subgroup $\tbG_{\text{der}}\subseteq \tbG$ is 
semisimple of simply connected type.  
\end{itemize}
(The fact that this always exists is due to Asai; see 
\cite[Prop.~1.7.13]{gema}.) We begin by collecting some useful properties 
about $\pi\colon \tbG\rightarrow \bG$.

\begin{rem} \label{rem10} The restriction of $\pi$ to $\tbG_{\text{der}}$ 
will be denoted by $\pi_{\text{der}}\colon \tbG_{\text{der}} \rightarrow 
\bG$. Since $\bG$ is semisimple, $\pi_{\text{der}}$ is surjective and 
$\bK:=\ker(\pi_{\text{der}}) \subseteq Z(\tbG_{\text{der}})$ is finite. 
(For example, if $\bG$ is simple and not of type $A_n$, then $\bK$ has
order $1,2,3$ or $4$.) Since $\ker(\pi)$ is connected, we have 
$\pi(\tbG^\tF)=\bG^F$. But note that we do not necessarily have 
$\pi(\tbG_{\text{der}}^\tF)=\bG^F$. Firstly, by Steinberg 
\cite[Cor.~12.6]{St68}, we have 
\begin{equation*}
\pi(\tbG_{\text{der}}^\tF)=\bG^F_u:=\mbox{subgroup of $\bG^F$ generated by 
the unipotent elements of $\bG^F$}.\tag{a}
\end{equation*}
Secondly, $\bG^F_u$ is a normal subgroup of $\bG^F$ and there is a 
canonical isomorphism 
\begin{equation*}
\bG^F/\bG^F_u=\bG^F/\pi(\tbG_{\text{der}}^\tF)
\stackrel{\sim}{\longrightarrow} \bK/\{z^{-1}\tF(z) \mid z\in \bK\},\tag{b}
\end{equation*}
defined as follows (see \cite[Prop.~1.4.13]{gema}): Let $g \in \bG^F$ and 
$\dot{g}\in \tbG_{\text{der}}$ be such that $\pi(\dot{g})=g$; then $u:=
\dot{g}^{-1}\tF(\dot{g})\in \bK$ and the coset of $g$ is mapped to the 
coset of~$u$.
\end{rem}

\begin{rem} \label{rem10a} Let $s\in \bG$ be semisimple and $\tis\in 
\tbG$ be such that $\pi(\tis)=s$; note that~$\tis$ is also semisimple. 
Since $\tbG_{\text{der}}$ is simply connected, it is known that the 
centraliser $C_{\tbG}(\tis)$ is connected; see Steinberg 
\cite[\S 8]{St68}. However, $C_\bG(s)$ need not be connected, but 
the group of components $A_\bG(s)=C_\bG(s)/C_\bG^\circ(s)$ is always 
isomorphic to a subgroup of $\bK=\ker(\pi_{\text{der}})$. Slightly more 
generally, we have:
\end{rem}

\begin{lem} \label{scad3b} Let $\tilde{g}\in \tbG^\tF$ be such that $C_\tbG
(\tilde{g})$ is connected, and set $g:=\pi(\tilde{g}) \in \bG$. Then 
$A_\bG(g)$ is isomorphic to a subgroup of $\bK$, and $A_\bG(g)^F$ is 
isomorphic to a subgroup of $\bK^\tF$.
\end{lem}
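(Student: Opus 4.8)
The plan is to work with the surjection $\pi_{\text{der}}\colon \tbG_{\text{der}}\rightarrow \bG$ (with kernel $\bK\subseteq Z(\tbG_{\text{der}})$) and exploit that $\tbG_{\text{der}}$ is simply connected, so that centralisers of semisimple elements are connected. However, the hypothesis here is about $C_\tbG(\tilde g)$ being connected, not about $\tilde g$ being semisimple. First I would reduce to the derived group: pick $\tilde g'\in \tbG_{\text{der}}$ with $\pi_{\text{der}}(\tilde g')=g$. Such a lift exists because $\bG$ is semisimple, but $\tilde g'$ need not be $\tF$-stable or equal to $\tilde g$. Rather than insist on a good lift, I would instead argue abstractly with the exact sequence $1\rightarrow \ker(\pi)\rightarrow \tbG \xrightarrow{\pi} \bG\rightarrow 1$, where $\ker(\pi)$ is a connected central torus.

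**Comparing component groups.** The key step is to relate $C_\bG(g)$ to $C_\tbG(\tilde g)$. Take any $x\in C_\bG(g)$. Lift $x$ to $\tilde x\in \tbG$ with $\pi(\tilde x)=x$; then $\tilde x^{-1}\tilde g \tilde x$ and $\tilde g$ have the same image $g$ under $\pi$, so $\tilde x^{-1}\tilde g\tilde x = z \tilde g$ for some $z\in \ker(\pi)$ (using that $\ker(\pi)$ is central, the commutator $[\tilde x,\tilde g]$ lands in $\ker(\pi)$). This gives a map $C_\bG(g)\rightarrow \ker(\pi)$, $x\mapsto z_x$, which is readily checked to be a group homomorphism. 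Its kernel is $\pi(C_\tbG(\tilde g))$ (those $x$ lifting to an element centralising $\tilde g$), and since $\ker(\pi)$ is a connected torus, the image of the connected group $C_\bG^\circ(g)$ lies in the identity component, hence the homomorphism factors through $A_\bG(g)=C_\bG(g)/C_\bG^\circ(g)$ into $\ker(\pi)$. But $A_\bG(g)$ is finite and $\ker(\pi)$ is a torus, so actually the map $C_\bG^\circ(g)\rightarrow \ker(\pi)$ is \emph{not} automatically trivial; I need that $C_\tbG(\tilde g)$ is connected to conclude $\pi(C_\tbG(\tilde g))=\pi(C_\tbG(\tilde g))$ is connected and therefore contained in, and in fact equal to, $C_\bG^\circ(g)$. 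This last identification — that $\pi(C_\tbG(\tilde g))=C_\bG^\circ(g)$ when $C_\tbG(\tilde g)$ is connected — is the crucial point; it follows because $\pi(C_\tbG(\tilde g))$ is a connected subgroup of finite index in $C_\bG(g)$ (the index being bounded by $|\ker(\pi_{\text{der}})|=|\bK|$ via the argument in Remark~\ref{rem10a}). Hence $A_\bG(g)=C_\bG(g)/\pi(C_\tbG(\tilde g))$ embeds into $\bK$: indeed one refines the homomorphism $x\mapsto z_x$ by choosing the lift $\tilde x$ inside $\tbG_{\text{der}}$ when possible, so that the obstruction lands in $\bK=\ker(\pi_{\text{der}})$ rather than the whole torus $\ker(\pi)$; here one uses that $\tbG = \tbG_{\text{der}}\cdot\ker(\pi)$ since $\bG$ is semisimple.

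**The $F$-fixed-point refinement.** For the second assertion I would run the same construction $F$-equivariantly. Since $\tilde g\in \tbG^\tF$ and $g=\pi(\tilde g)\in \bG^F$, the homomorphism $C_\bG(g)\rightarrow \bK$ (or rather the induced embedding $A_\bG(g)\hookrightarrow \bK$) intertwines the $F$-action on $A_\bG(g)$ with the $\tF$-action on $\bK$: this needs checking that the cocycle $z_x$ transforms correctly, i.e.\ $z_{F(x)}=\tF(z_x)$ when $x\in C_\bG(g)^F$ and one uses $F\circ\pi=\pi\circ\tF$ together with the fact that $\tilde g$ is $\tF$-fixed. Taking $F$-fixed points of the embedding $A_\bG(g)\hookrightarrow\bK$ then yields an embedding $A_\bG(g)^F\hookrightarrow \bK^\tF$, as required.

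**Main obstacle.** The delicate point is the identification $\pi(C_\tbG(\tilde g))=C_\bG^\circ(g)$ and, correlatively, that the component-group obstruction can be made to land in the \emph{finite} group $\bK$ rather than in the connected torus $\ker(\pi)$. The clean way to handle this is to choose lifts in $\tbG_{\text{der}}$ throughout: given $x\in C_\bG(g)$, since $\pi_{\text{der}}\colon\tbG_{\text{der}}\rightarrow\bG$ is surjective, pick $\tilde x\in\tbG_{\text{der}}$ with $\pi_{\text{der}}(\tilde x)=x$, and pick one fixed $\tilde g_0\in\tbG_{\text{der}}$ with $\pi_{\text{der}}(\tilde g_0)=g$; then $[\tilde x,\tilde g_0]\in\ker(\pi_{\text{der}})=\bK$, and $x\mapsto[\tilde x,\tilde g_0]$ is the desired homomorphism $C_\bG(g)\rightarrow\bK$ whose kernel is $\pi_{\text{der}}(C_{\tbG_{\text{der}}}(\tilde g_0))$. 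The connectedness of $C_\tbG(\tilde g)$ feeds in by guaranteeing $C_{\tbG_{\text{der}}}(\tilde g_0)$ has the ``right size'', namely that $\pi_{\text{der}}(C_{\tbG_{\text{der}}}(\tilde g_0))\supseteq C_\bG^\circ(g)$ with finite index quotient inside $\bK$ — this is exactly the mechanism already invoked in Remark~\ref{rem10a}, and I would cite Steinberg \cite[\S 8, \S 9]{St68} for it. Once the homomorphism and its kernel are pinned down, both statements of the lemma are immediate.
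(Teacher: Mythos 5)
Your proof is correct and takes essentially the same route as the paper's: build the homomorphism $\delta\colon C_\bG(g)\to\ker(\pi)$, $x\mapsto z_{\dot x}:=\dot x^{-1}\tilde g^{-1}\dot x\tilde g$, identify its kernel with $\pi(C_\tbG(\tilde g))=C_\bG^\circ(g)$ using the connectedness hypothesis, and then check $F$-equivariance. The detour in the middle of your argument (worrying that $z_{\dot x}$ only lands in the connected torus $\ker(\pi)$, and switching to lifts inside $\tbG_{\text{der}}$ to force the target to be finite) is unnecessary: $z_{\dot x}$ is a commutator, hence lies in $\tbG_{\operatorname{der}}$, so $z_{\dot x}\in\tbG_{\operatorname{der}}\cap\ker(\pi)=\bK$ automatically — exactly the observation the paper makes. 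Sticking with the given $\tF$-fixed lift $\tilde g$ throughout (rather than a possibly non-$\tF$-fixed $\tilde g_0\in\tbG_{\operatorname{der}}$) also keeps the $F$-equivariance check clean, which is in fact how you run the final step.
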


\begin{proof} This is contained in \cite[9.1]{St68}; we recall the main 
ingredients. First note that $\pi(C_\tbG(\tilde{g}))=\pi(C_{\tbG}^\circ
(\tilde{g}))=C_\bG^\circ(g)$; see \cite[1.3.10(e)]{gema}. Then there is 
a canonical isomorphism 
\[ \delta\colon A_\bG(g) \stackrel{\sim}{\longrightarrow} \{h^{-1}
\tilde{g}^{-1}h\tilde{g} \mid h \in \tbG\}\cap \ker(\pi)\]
defined as follows. Let $x\in C_\bG(g)$ and $\dot{x}\in \tbG$ be such 
that $\pi(\dot{x})=x$. Then $z_{\dot{x}}:=\dot{x}^{-1}\tilde{g}^{-1}
\dot{x}\tilde{g} \in \ker(\pi)$ and $\delta$ sends the image of $x=\pi(
\dot{x})$ in $A_\bG(g)$ to~$z_{\dot{x}}$; note that $z_{\dot{x}}\in 
\tbG_{\text{der}}\cap \ker(\pi)=\bK$. (The construction of 
$\delta$ is based on the purely group-theoretical result in 
\cite[4.5]{St68}; see also \cite[Lemma~1.1.9]{gema}). Now one 
checks that $\delta\circ F=\tF\circ \delta$. Hence, $A_\bG(g)^F$ is 
isomorphic to a subgroup of~$\bK^\tF$.
\end{proof}

\begin{rem} \label{scad3c} Let $\tilde{g}\in \tbG$ and $g:=\pi(\tilde{g}) 
\in \bG$. If $C_{\tbG}(\tilde{g})$ is not connected, then $\pi(C_{\tbG}
(\tilde{g}))\supseteq \pi(C_{\tbG}^\circ(\tilde{g}))=C_\bG^\circ(g)$ and 
$C_\bG(g)/\pi(C_{\tbG}(\tilde{g}))$ is a factor group of $A_\bG(g)$. By 
exactly the same argument as above (using \cite[4.5]{St68}), we still 
obtain an injective homomorphism
\[ \delta\colon C_\bG(g)/\pi(C_{\tbG}(\tilde{g})) \hookrightarrow \bK,\]
Since $\ker(\pi)$ is connected, we have $\ker(\pi) \subseteq C_\tbG^\circ
(\tilde{g})$ and so $|A_\tbG(\tilde{g})|=|\pi(C_\tbG(\tilde{g})):
C_\bG^\circ(g)|$.
\end{rem}

%\begin{lem} \label{unipcl} Let $s\in \bG^F$ and $\tis\in \tbG^\tF$ 
%be semisimple such that $\pi(\tis)=s$. Then $\pi\colon \tbG\rightarrow
%\bG$ induces a bijection between the unipotent classes of $C_\tbG(\tis)$
%and $C_\bG(s)$, and also a bijection between the unipotent classes of the
%finite groups $C_\tbG(\tis)^\tF$ and $C_\bG(s)^F$.
%\end{lem}
%
%\begin{proof} Write $\tbH:=C_\tbG(\tis)$ and $\bH:=C_\bG(s)$. Let 
%$\tbH_{\text{uni}}$ be the set of unipotent elements in $\tbH$ and
%$\bH_{\text{uni}}$ be the set of unipotent elements in $\bH$. First 
%note that $\pi(\tbH)=\bH^\circ$ and that $\bH_{\text{uni}} \subseteq 
%\bH^\circ$. Hence, since $\ker(\pi)\subseteq Z(\tbG)$, we obtain bijections 
%$\pi \colon \tbH_{\text{uni}}\rightarrow \bH_{\text{uni}}$
%and $\pi \colon \tbH_{\text{uni}}^\tF\rightarrow \bH_{\text{uni}}^F$.
%???
%Let $\tilde{\bC}$ be a unupitent class in $\tbH$. We claim that 
%$\pi(\tilde{\bC})$ is a unipotent class. Let $u \in \pi(\tilde{\bC})$
%and $x\in \bH$. Let $\tilde{x}\in \tbG$ be such that $\pi(\tilde{x})=
%x$. Then $\tilde{x}\tilde{s}\tilde{x}^{-1}\tilde{s}^{-1}$ will be 
%a central element
%\end{proof}

The following technical result will be useful further on.

\begin{lem} \label{lem00} Assume  that $\tF$ acts trivially on $\bK$. 
Let $g_1,g_2\in \bG^F$ be elements that are conjugate in $\bG$. 
If $g_1^{-1}g_2\in \pi(\tbG_{\operatorname{der}}^\tF)$ and $C_\tbG
(\tilde{g}_1)$ is connected (where $\tilde{g}_1\in \tbG$ is such that 
$\pi(\tilde{g}_1)=g_1)$, then $g_1,g_2$ are already conjugate in $\bG^F$. 
\end{lem}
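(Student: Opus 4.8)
The statement is essentially a descent argument: $g_1$ and $g_2$ are $\bG$-conjugate, and we want to upgrade this to $\bG^F$-conjugacy under the hypotheses. The standard tool is the parametrisation of $\bG^F$-classes inside an $F$-stable $\bG$-class by $F$-conjugacy classes in the component group, as recalled in \ref{rem00}. So first I would set $\Sigma$ to be the $\bG$-conjugacy class of $g_1$ (equivalently of $g_2$), fix $g_0:=g_1$ as the base point, and write $g_2=xg_1x^{-1}$ with $x\in\bG$. By Lang's theorem applied to $\bG$ we have $x^{-1}F(x)\in C_\bG(g_1)$ (because $g_2$ is $F$-fixed), so $x^{-1}F(x)=\dot a$ is a representative of some class $a\in A_\bG(g_1)$, and the $\bG^F$-class of $g_2$ is exactly the class $C_a$ of \ref{rem00}. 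Thus $g_1,g_2$ are $\bG^F$-conjugate if and only if $a$ is $F$-conjugate to $1$ in $A_\bG(g_1)$, and since $\tF$ acts trivially on $\bK$ and $A_\bG(g_1)^F$ embeds in $\bK^{\tF}=\bK$ (Lemma~\ref{scad3b}), everything here is happening in a setting where $F$ acts trivially on the relevant component group; in particular (Example~\ref{exp00}) $F$-conjugacy in $A_\bG(g_1)$ is just equality, so it suffices to show $a=1$, i.e. that $x^{-1}F(x)\in C_\bG^\circ(g_1)$.

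\textbf{Key steps.} The second ingredient is to translate the hypothesis $g_1^{-1}g_2\in\pi(\tbG_{\mathrm{der}}^{\tF})$ through the isomorphism $\delta$ of Lemma~\ref{scad3b}. Concretely: lift $g_1$ to $\tilde g_1\in\tbG$ with $\pi(\tilde g_1)=g_1$ and $C_\tbG(\tilde g_1)$ connected (this is the hypothesis), and lift $x$ to $\dot x\in\tbG$ with $\pi(\dot x)=x$. Under $\pi$, the element $\dot x^{-1}\tF(\dot x)\in\tbG$ maps to $x^{-1}F(x)=\dot a\in C_\bG(g_1)$. The image of the class $a$ under $\delta$ is the commutator $z_{\dot x}:=\dot x^{-1}\tilde g_1^{-1}\dot x\,\tilde g_1\in\bK=\tbG_{\mathrm{der}}\cap\ker(\pi)$ (notation of Lemma~\ref{scad3b}); and $a=1$ in $A_\bG(g_1)$ iff $z_{\dot x}=1$ in $\bK$, since $\delta$ is injective. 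So the whole problem reduces to: \emph{show $z_{\dot x}=1$ using that $g_1^{-1}g_2\in\pi(\tbG_{\mathrm{der}}^{\tF})$}. The plan for this is to compute $\pi$ of a suitable $\tbG$-level element. We have $\pi(\dot x\tilde g_1\dot x^{-1})=xg_1x^{-1}=g_2$ and $\pi(\tilde g_1)=g_1$, so $\pi(\tilde g_1^{-1}\cdot\dot x\tilde g_1\dot x^{-1})=g_1^{-1}g_2\in\pi(\tbG_{\mathrm{der}}^{\tF})$. Hence $\tilde g_1^{-1}\dot x\tilde g_1\dot x^{-1}$ differs from an element of $\tbG_{\mathrm{der}}^{\tF}$ by an element of $\ker(\pi)$, and I would use this together with the structure of $\ker(\pi)$ (connected, central) and the description in Remark~\ref{rem10}(b) of $\bG^F/\bG^F_u$ to conclude that $z_{\dot x}$, which a priori lies in $\bK$, actually maps to the trivial coset in $\bK/\{z^{-1}\tF(z)\mid z\in\bK\}=\bK/\{1\}=\bK$ — i.e. is trivial. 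The point is that the cocycle $\dot x^{-1}\tF(\dot x)$ sits in $\tbG_{\mathrm{der}}^{\tF}$-worth of data precisely because $g_1^{-1}g_2$ lies in $\pi(\tbG_{\mathrm{der}}^{\tF})$, and $\tF$ acting trivially on $\bK$ kills the ambiguity that would otherwise appear.

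\textbf{Main obstacle.} The delicate bookkeeping step is the last one: relating the element $z_{\dot x}=\dot x^{-1}\tilde g_1^{-1}\dot x\,\tilde g_1\in\bK$ (which controls $\bG^F$- versus $\bG$-conjugacy via $\delta$) to the hypothesis living in $\pi(\tbG_{\mathrm{der}}^{\tF})$ (which controls $\bG^F_u$ versus $\bG^F$ via Remark~\ref{rem10}). These are two different ``component group''-type obstructions and making them talk to each other cleanly requires choosing the lifts $\tilde g_1$ and $\dot x$ compatibly — e.g. I may need to adjust $\dot x$ by an element of $\ker(\pi)$, or to choose $\tilde g_1\in\tbG_{\mathrm{der}}$ when possible, so that the $\tF$-cocycle computed at the $\tbG$ level lands in $\tbG_{\mathrm{der}}$ rather than merely in $\tbG$. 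This is exactly where the hypothesis ``$\tF$ acts trivially on $\bK$'' gets used: it guarantees that the relevant twisted-form set $\bK/\{z^{-1}\tF(z)\}$ is all of $\bK$, so that ``trivial coset'' forces ``trivial element'', closing the argument. I expect the rest — the reduction to $a=1$ and the identification $C_{g_2}^{\bG^F}=C_a$ — to be routine applications of \ref{rem00} and Lemma~\ref{scad3b}.
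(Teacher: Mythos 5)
There is a genuine gap, and it sits exactly where you flag the ``delicate bookkeeping'' as a ``main obstacle'' but then defer it.

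The strategic reduction is fine: set $\Sigma$ to be the $\bG$-class of $g_1$, write $g_2=xg_1x^{-1}$ with $x\in\bG$, so that $\dot a:=x^{-1}F(x)\in C_\bG(g_1)$ and $g_2\in C_a$ where $a\in A_\bG(g_1)$ is the image of $\dot a$; and since $A_\bG(g_1)\hookrightarrow\bK$ (Lemma~\ref{scad3b}) and $\tF$ is trivial on $\bK$, the action of $F$ on $A_\bG(g_1)$ is trivial and $F$-conjugacy reduces to equality, so what you must show is $a=1$. So far so good. The error is in how you propose to test $a=1$ via $\delta$. The isomorphism $\delta$ of Lemma~\ref{scad3b} applies to a lift of an element of $C_\bG(g_1)$: for $y\in C_\bG(g_1)$ lifted to $\dot y\in\tbG$, $\delta$ sends the image of $y$ in $A_\bG(g_1)$ to $\dot y^{-1}\tilde g_1^{-1}\dot y\,\tilde g_1$, and this commutator lands in $\ker(\pi)$ \emph{because $y$ centralises $g_1$}. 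You instead plug in the conjugating element $x$ itself: your ``$z_{\dot x}=\dot x^{-1}\tilde g_1^{-1}\dot x\,\tilde g_1$'' has $\pi(z_{\dot x})=x^{-1}g_1^{-1}xg_1$, which is \emph{not} the identity (since $x\notin C_\bG(g_1)$ in general), so $z_{\dot x}\notin\ker(\pi)$ and is not an element of $\bK$ at all. The object that actually detects $a$ is $\delta(a)$, computed from a lift of $\dot a=x^{-1}F(x)$ (for instance $\dot x^{-1}\tF(\dot x)$), and relating that to the hypothesis ``$g_1^{-1}g_2\in\pi(\tbG_{\mathrm{der}}^{\tF})$'' is precisely the nontrivial computation you have not done. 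Finally, the concluding step you sketch is circular: since $\tF$ is trivial on $\bK$, the quotient $\bK/\{z^{-1}\tF(z)\}$ \emph{is} all of $\bK$, so saying something ``maps to the trivial coset'' in it gives no information unless you already know the element itself is $1$.

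For comparison, the paper's proof avoids this tangle by working entirely upstairs in $\tbG_{\mathrm{der}}$. It chooses lifts $x_1,x_2$ of $g_1,g_2$ inside a single $\tbG_{\mathrm{der}}$-conjugacy class, uses the canonical isomorphism of Remark~\ref{rem10}(b) to show the hypothesis $g_1^{-1}g_2\in\pi(\tbG_{\mathrm{der}}^{\tF})$ forces the two $\tF$-cocycles $u_i=x_i^{-1}\tF(x_i)\in\bK$ to coincide, then applies Lang in the connected central kernel $\ker(\pi)$ to twist $x_1,x_2$ into $\tF$-fixed elements that are $\tbG$-conjugate with connected centraliser (the second hypothesis), hence $\tbG^\tF$-conjugate, and pushes that down through~$\pi$. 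If you want to rescue your component-group approach, you would need to compute $\delta(a)$ honestly from a lift of $x^{-1}F(x)$ and chase it against the hypothesis; this is doable but is precisely the work the paper routes around by the twisting trick.
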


\begin{proof} Let $\bC$ be the $\bG$-conjugacy class containing the
elements $g_1,g_2$. Since $\pi(\tbG_{\text{der}})=\bG$, there exists a 
$\tbG_{\text{der}}$-conjugacy class $\tilde{\bC}$ such that 
$\pi(\tilde{\bC})=\bC$. For $i=1,2$ let $x_i\in \tilde{\bC}$ be such 
that $\pi(x_i)=g_i$; then $u_i:=x_i^{-1} \tF(x_i) \in \bK$. Note that 
the elements $x_1,x_2$ are not necessarily fixed by~$\tF$. We will slightly
modify them so as to obtain elements in $\tbG^\tF$. This is done as follows. 
Since $\tF$ acts trivially on $\bK$, we have a canonical isomorphism 
\[ \bG^F/\pi(\tbG_{\text{der}}^\tF)\stackrel{\sim}{\longrightarrow} \bK,\]
defined as in Remark~\ref{rem10}(b); note that, under this isomorphism,
the coset of $g_i$ is sent to~$u_i$. But, assuming that $g_1,g_2$ belong 
to the same coset mod $\pi(\tbG_{\text{der}}^\tF)$, we must have $u_1=u_2$. 
Let $y:=x_1^{-1}x_2\in \tbG_{\text{der}}$. Since $\tF(x_i)=x_iu_i$ 
and $u_i\in \bK\subseteq Z(\tbG)$ for $i=1,2$, we obtain $\tF(y)=y$. 
Hence, $x_2=x_1y$ where $y\in \tbG_{\text{der}}^\tF$. 

Since $u_1\in \bK\subseteq \ker(\pi)$ and $\ker(\pi)$ is connected, we 
can further find an element $z\in \ker(\pi)$ such that $u_1=z\tF(z)^{-1}$. 
Since $\ker(\pi)\subseteq Z(\tbG)$, we obtain $\tF(zx_1)=\tF(z)\tF(x_1)=
(u_1^{-1}z)(x_1u_1)=zx_1$ and $\tF(zx_2)=\tF(zx_1y)=\tF(zx_1)y=zx_1y=zx_2$. 
Thus, $zx_1,zx_2\in \tbG^\tF$ where $z\in Z(\tbG)$. Since $x_1,x_2$ are 
conjugate in $\tbG_{\text{der}}$, the elements $zx_1$ and $zx_2$ will 
be conjugate in $\tbG$. 

Finally, since $z\in Z(\tbG)$, $\pi(\tilde{g}_1)=g_1=\pi(x_1)$ and 
$\ker(\pi) \subseteq Z(\tbG)$, we have $C_\tbG(zx_1)=C_\tbG(x_1)=
C_{\tbG}(\tilde{g}_1)$. By assumption, this centraliser is connected.
Consequently, $zx_1\in \tbG^\tF$ and $zx_2\in \tbG^\tF$ are not only 
conjugate in $\tbG$ but already in $\tbG^\tF$ (see~\ref{rem00}). 
But then $g_1=\pi(zx_1)$ and $g_2=\pi(zx_2)$ will be conjugate in 
$\bG^F$, as claimed.
\end{proof}

\begin{exmp} \label{lem00a} Assume that $\tF$ acts trivially on $\bK$.
Let $s\in \bG^F$ be semisimple such that $r:=|A_\bG(s)|=|\bK|$; let $\bC$
be the $\bG$-conjugacy class of~$s$. By Remark~\ref{rem10a} and 
Lemma~\ref{scad3b}, we have $A_\bG(s)\cong \bK$ and $F$ acts trivially 
on $A_\bG(s)$. So $\bC^F$ splits into precisely $r$ conjugacy classes of
$\bG^F$; let $s_1,\ldots,s_r\in \bG^F$ be representatives of these 
classes. Then $s_1,\ldots,s_r$ are also representatives for the cosets
of $\bG^F$ mod $\pi(\tbG_{\operatorname{der}}^\tF)$ and  
\begin{equation*}
C_i:=\bC^F\cap s_i\pi(\tbG_{\operatorname{der}}^\tF) \quad \mbox{is the
$\bG^F$-conjugacy class of $s_i$, for $i=1,\ldots,r$}.\tag{a}
\end{equation*}
Indeed, as in the above proof, we have $r=|\bK|=|\bG^F/
\pi(\tbG_{\text{der}}^\tF)|$. The elements $s_i$ are all conjugate in 
$\bG$, but no two of them are conjugate in $\bG^F$. Hence, they are in 
different cosets mod $\pi(\tbG_{\text{der}}^\tF)$. Thus, (a) holds.
\end{exmp}

We give some further applications of the above results. Let $f\in \CF
(\bG^F)$ be any class function and $s\in \bG^F$ be semisimple. Then we 
define
\[ f^{(s)}:=d^{-1}\sum_\lambda \lambda(s)^{-1} \lambda \cdot f \in
\CF(\bG^F),\]
where the sum runs over all linear characters $\lambda \colon \bG^F
\rightarrow \C^\times$ with $\pi(\tbG_{\text{der}}^\tF)\subseteq 
\ker(\lambda)$, and $d\geq 1$ is the number of such linear characters.

\begin{prop} \label{scad1} Assume that $\tF$ acts trivially on $\bK$. 
Let $\Sigma$ be an $F$-stable conjugacy class of $\bG$ and $f\in \CF
(\bG^F\mid \Sigma)$. Let $s\in \bG^F$ be the semisimple part of an element 
of $\Sigma^F$, and $\Sigma^F_{s}$ be the set of all those $g \in \Sigma^F$ 
such that the semisimple part of $g$ is $\bG^F$-conjugate to~$s$. Note 
that $\Sigma_{s}^F$ is a union of $\bG^F$-conjugacy classes. Then, for 
any $g\in \bG^F$, we have 
\[ f^{(s)}(g)=\left\{\begin{array}{cl} f(g) & \qquad \mbox{if $g \in 
\Sigma_{s}^F$},\\ 0 & \qquad \mbox{otherwise}.\end{array}\right.\]
\end{prop}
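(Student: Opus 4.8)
The plan is to evaluate $f^{(s)}(g)$ pointwise by means of the orthogonality relations for the finite abelian group $\bG^F/\pi(\tbG_{\text{der}}^\tF)$. Since each $\lambda$ is a homomorphism, $\lambda(s)^{-1}(\lambda{\cdot}f)(g)=\lambda(s^{-1}g)\,f(g)$, and therefore
\[
f^{(s)}(g)=f(g)\cdot\Bigl(d^{-1}\sum_\lambda\lambda(s^{-1}g)\Bigr).
\]
The characters $\lambda$ occurring in this sum are exactly the linear characters of $\bG^F$ trivial on $\pi(\tbG_{\text{der}}^\tF)$, i.e. those inflated from $\bG^F/\pi(\tbG_{\text{der}}^\tF)$; by Remark~\ref{rem10}(b) (using that $\tF$ acts trivially on $\bK$) this quotient is a finite abelian group, so the $\lambda$ form its full character group and $d$ is its order. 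Hence $d^{-1}\sum_\lambda\lambda(s^{-1}g)$ equals $1$ if $s^{-1}g\in\pi(\tbG_{\text{der}}^\tF)$ and $0$ otherwise, so that $f^{(s)}(g)=f(g)$ when $s^{-1}g\in\pi(\tbG_{\text{der}}^\tF)$ and $f^{(s)}(g)=0$ otherwise.

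It remains to match this dichotomy with membership in $\Sigma_s^F$. If $g\notin\Sigma^F$, then $f(g)=0$, and the displayed formula gives $f^{(s)}(g)=0$, consistent with the claim. So assume $g\in\Sigma^F$ and write $g=s'u=us'$ with $s'$ semisimple and $u$ unipotent; then $s',u\in\bG^F$. By Remark~\ref{rem10}(a), $\pi(\tbG_{\text{der}}^\tF)=\bG^F_u$ contains $u$, so $s^{-1}g$ and $s^{-1}s'$ lie in the same coset modulo $\bG^F_u$; thus $s^{-1}g\in\pi(\tbG_{\text{der}}^\tF)$ if and only if $s^{-1}s'\in\bG^F_u$. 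Note also that $s$ and $s'$ both lie in $\Sigma_{p'}^F$ (notation of Remark~\ref{scad1a}), hence are conjugate in $\bG$, and that $C_\tbG(\tis)$ is connected for any preimage $\tis$ of $s$ under $\pi$, since $\tbG_{\text{der}}$ is simply connected and $\tis$ is semisimple (Remark~\ref{rem10a}).

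Now I would invoke Lemma~\ref{lem00} with $g_1:=s$, $g_2:=s'$: since $\tF$ acts trivially on $\bK$, since $s$ and $s'$ are $\bG$-conjugate, $C_\tbG(\tis)$ is connected, and $s^{-1}s'\in\pi(\tbG_{\text{der}}^\tF)$, the lemma yields that $s$ and $s'$ are conjugate in $\bG^F$, i.e. $g\in\Sigma_s^F$. Conversely, if $g\in\Sigma_s^F$, then $s'$ is $\bG^F$-conjugate to $s$, so $s$ and $s'$ have the same image in the abelian group $\bG^F/\bG^F_u$, whence $s^{-1}s'\in\bG^F_u$. Combining the two implications, for $g\in\Sigma^F$ one has $s^{-1}g\in\pi(\tbG_{\text{der}}^\tF)$ precisely when $g\in\Sigma_s^F$; together with the first paragraph this gives the asserted formula.

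The argument is essentially bookkeeping with cosets modulo $\bG^F_u=\pi(\tbG_{\text{der}}^\tF)$ and the orthogonality relations; the only substantive input is the forward implication $s^{-1}s'\in\bG^F_u\Rightarrow s,s'$ conjugate in $\bG^F$, which is exactly Lemma~\ref{lem00} and ultimately rests on the connectedness of $C_\tbG(\tis)$. I therefore do not expect a genuine obstacle here, only the routine verification that all hypotheses of Lemma~\ref{lem00} are in force, namely $\tF$ trivial on $\bK$, $\bG$-conjugacy of $s$ and $s'$ (through $\Sigma_{p'}^F$), and connectedness of $C_\tbG(\tis)$.
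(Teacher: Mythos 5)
Your proof is correct and takes essentially the same route as the paper: reduce $f^{(s)}(g)$ via the character homomorphism property and orthogonality for $\bG^F/\pi(\tbG_{\text{der}}^{\tF})$ to a coset condition on the semisimple part, then use Lemma~\ref{lem00} (with $C_\tbG(\tis)$ connected because $\tbG_{\text{der}}$ is simply connected) to upgrade $\bG$-conjugacy plus the coset condition to $\bG^F$-conjugacy. The only cosmetic difference is that you organize the argument as an ``iff'' for $g\in\Sigma^F$, whereas the paper treats the two directions ($g\in\Sigma_s^F$ and $f^{(s)}(g)\neq 0$) separately; the substance is identical.
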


\begin{proof} Let $g\in \bG^F$ and write $g=tv=vt$ where $t\in \bG^F$ is 
semisimple and $v \in \bG^F$ is unipotent. By Remark~\ref{rem10}(a),
we have $\lambda(v)=1$ for any $\lambda$ as above, and so $\lambda(g)=
\lambda(t)$. This yields the formula
\[ f^{(s)}(g)=d^{-1}\Bigl(\sum_\lambda \lambda(s^{-1}t)\Bigr)f(g).\]
If $g \in \Sigma_{s}^F$, then $s,t$ will be conjugate in $\bG^F$ and so 
$\lambda(s^{-1}t)=1$. In this case, we obtain $f^{(s)}(g)=f(g)$, as desired.
Now let $g\in \bG^F$ be arbitrary such that $f^{(s)}(g)\neq 0$. First of all,
this implies that $g \in \Sigma^F$ and, consequently, the elements $s,t$ 
are conjugate in $\bG$. By the orthogonality relations for irreducible
characters, the sum $\sum_\lambda \lambda(s^{-1}t)$ will only be non-zero 
if $s^{-1}t \in \pi (\tbG_{\text{der}}^\tF)$. Let $\tis\in \tbG$ be 
semisimple such that $\pi(\tis)=s$; then $C_\tbG(\tis)$ is connected as
already mentioned in Remark~\ref{rem10a}. Hence, Lemma~\ref{lem00} implies 
that $s,t$ are conjugate in $\bG^F$, and so $g \in \Sigma_{s}^F$.
\end{proof}

\begin{cor} \label{scad2} In the setting of Proposition~\ref{scad1},
assume that $f$ is uniform. Then $f^{(s)}$ also is a uniform
function. In particular, the indicator function $\varepsilon_{\Sigma_{s}^F}
\in \operatorname{CF}(\bG^F\mid\Sigma)$ is a uniform function.
\end{cor}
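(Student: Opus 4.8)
The plan is to deduce Corollary~\ref{scad2} directly from Proposition~\ref{scad1}, together with the $p$-constancy machinery of Example~\ref{pconst}. The key observation is that the operator $f \mapsto f^{(s)}$ is, up to the harmless factor $d^{-1}$, a sum of multiplication operators $f \mapsto \lambda(s)^{-1}\lambda\cdot f$, and each such multiplier $\lambda$ is a linear character of $\bG^F$ that is trivial on $\pi(\tbG_{\operatorname{der}}^\tF)$. By Remark~\ref{rem10}(a), the subgroup $\pi(\tbG_{\operatorname{der}}^\tF)=\bG^F_u$ contains all unipotent elements of $\bG^F$, so $\lambda$ vanishes (takes value~$1$) on unipotent elements; hence $\lambda(g)=\lambda(t)$ where $t$ is the semisimple part of~$g$. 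In other words, each such $\lambda$ is a $p$-constant class function in the sense of Example~\ref{pconst}.

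Granting that, the argument is short. First I would recall from Example~\ref{pconst} that if $\gamma\in\operatorname{CF}(\bG^F)$ is $p$-constant and $f$ is uniform, then $\gamma\cdot f$ is again uniform; this is quoted there from \cite[Prop.~10.1.6]{rDiMi}. Applying this with $\gamma=\lambda$ (a $p$-constant function, as just noted) shows that $\lambda\cdot f$ is uniform for every admissible $\lambda$, and constant scalar multiples and finite sums of uniform functions are uniform, so
\[ f^{(s)}=d^{-1}\sum_\lambda \lambda(s)^{-1}\,\lambda\cdot f \]
is uniform. This proves the first assertion. For the ``in particular'' clause, I would take $f=\varepsilon_{\Sigma^F}$, which lies in $\operatorname{CF}(\bG^F\mid\Sigma)$ and is uniform by the remark in~\ref{unif}. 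Then Proposition~\ref{scad1} identifies $f^{(s)}=(\varepsilon_{\Sigma^F})^{(s)}$ with the indicator function $\varepsilon_{\Sigma_s^F}$: indeed $(\varepsilon_{\Sigma^F})^{(s)}(g)$ equals $\varepsilon_{\Sigma^F}(g)=1$ if $g\in\Sigma_s^F$ and $0$ otherwise, which is exactly $\varepsilon_{\Sigma_s^F}(g)$. Hence $\varepsilon_{\Sigma_s^F}$ is uniform, and it evidently lies in $\operatorname{CF}(\bG^F\mid\Sigma)$ since $\Sigma_s^F\subseteq\Sigma^F$.

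The only point that needs a little care—and what I would flag as the main (though minor) obstacle—is verifying that each multiplier $\lambda$ really is $p$-constant, i.e.\ that the condition $\pi(\tbG_{\operatorname{der}}^\tF)\subseteq\ker(\lambda)$ forces $\lambda$ to be trivial on unipotent elements. This is immediate from Remark~\ref{rem10}(a), which says precisely that $\pi(\tbG_{\operatorname{der}}^\tF)=\bG^F_u$ is the subgroup generated by the unipotent elements of $\bG^F$; so any linear character killing $\pi(\tbG_{\operatorname{der}}^\tF)$ kills all unipotent elements and therefore satisfies $\lambda(g)=\lambda(s(g))$ for the semisimple part $s(g)$. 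Everything else is formal: the space of uniform functions is a subspace (closed under linear combinations), and multiplication by a $p$-constant function preserves it. Note that the hypothesis that $\tF$ acts trivially on $\bK$ is inherited from Proposition~\ref{scad1} and is only needed there, not separately in this corollary.
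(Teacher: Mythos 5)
Your proof is correct and follows essentially the same route as the paper's: observe that each linear character $\lambda$ with $\pi(\tbG_{\text{der}}^\tF)\subseteq\ker(\lambda)$ is $p$-constant by Remark~\ref{rem10}(a), invoke Example~\ref{pconst} to conclude that $\lambda\cdot f$ is uniform, and then apply Proposition~\ref{scad1} to $f=\varepsilon_{\Sigma^F}$ for the ``in particular'' clause. The only cosmetic difference is that the paper says $\varepsilon_{\Sigma_s^F}$ is a scalar multiple of $(\varepsilon_{\Sigma^F})^{(s)}$ whereas you correctly note the two are in fact equal, which is immaterial for uniformity.
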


%\begin{proof} Let $\lambda \colon \bG^F\rightarrow \C^\times$ be a linear
%character with $\pi(\tbG_{\operatorname{der}}^\tF) \subseteq \ker(\lambda)$. 
%If $\bT\subseteq \bG$ is an $F$-stable maximal torus, then we have
%$\lambda\cdot R_{\bT,\theta}^\bG=R_{\bT,\theta\cdot \lambda}^\bG$ for 
%all $\theta\in \Irr(\bT^F)$, where we denote again by $\lambda$ the 
%restriction of $\lambda$ to $\bT^F$; see \cite[Cor.~1.27]{DeLu}.

\begin{proof} Let $\lambda \colon \bG^F\rightarrow \C^\times$ be a linear
character with $\pi(\tbG_{\operatorname{der}}^\tF) \subseteq \ker(\lambda)$; 
then $\lambda$ is $p$-constant (see Remark~\ref{rem10}(a)). Hence, $\lambda 
{\cdot} f$ is uniform for any such $\lambda$ (see Example~\ref{pconst}) and, 
consequently, $f^{(s)}$ is uniform. Finally, as already 
mentioned in~\ref{unif}, the indicator function $f:=\varepsilon_{\Sigma^F}
\in \CF(\bG^F)$ is uniform. By Proposition~\ref{scad1}, the indicator 
function $\varepsilon_{\Sigma_s^F} \in \CF(\bG^F)$ is a scalar multiple
of $f^{(s)}$. Hence, $\varepsilon_{\Sigma_s^F}$ is uniform.
\end{proof}

\begin{cor} \label{scad3} Assume that $\tF$ acts trivially on $\bK$. 
Let $C$ be a conjugacy class of $\bG^F$ 
and $g\in C$. If $C_\tbG(\tilde{g})$ is connected (where $\tilde{g}\in 
\tbG$ is such that $\pi(\tilde{g})=g$), then $\varepsilon_C \in \CF(\bG^F)$ 
is a uniform function.
\end{cor}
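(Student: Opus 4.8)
The plan is to realise $\varepsilon_C$ as the product of a uniform function and a $p$-constant one, and then to quote Example~\ref{pconst}. Let $\Sigma$ be the $\bG$-conjugacy class of $g$; it is $F$-stable, and $C$ is one of the $\bG^F$-classes contained in $\Sigma^F$. Put $N:=\pi(\tbG_{\text{der}}^\tF)$. By Remark~\ref{rem10}(a), $N$ coincides with the subgroup $\bG^F_u$ of $\bG^F$ generated by the unipotent elements, so $N$ is a normal subgroup of $\bG^F$ containing every unipotent element of $\bG^F$; moreover, since $\tF$ acts trivially on $\bK$, Remark~\ref{rem10}(b) gives $\bG^F/N\cong\bK$, which is abelian.

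The key step will be to check that, under our hypotheses,
\[ C=\Sigma^F\cap gN. \]
For the inclusion ``$\subseteq$'', any $g'\in C$ has the form $g'=xgx^{-1}$ with $x\in\bG^F$; then $g'\in\Sigma^F$, and in the abelian quotient $\bG^F/N$ the images of $g$ and $g'$ agree, so $g^{-1}g'\in N$. For ``$\supseteq$'', take $g'\in\Sigma^F\cap gN$: then $g$ and $g'$ are conjugate in $\bG$ (both lie in $\Sigma$), we have $g^{-1}g'\in N=\pi(\tbG_{\text{der}}^\tF)$, and $C_\tbG(\tilde g)$ is connected by hypothesis --- a condition which, because $\ker(\pi)\subseteq Z(\tbG)$, does not depend on the choice of $\tilde g$ with $\pi(\tilde g)=g$. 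Hence Lemma~\ref{lem00} applies and shows that $g$ and $g'$ are conjugate in $\bG^F$, i.e.\ $g'\in C$. This is the step that uses the full strength of the two hypotheses (``$\tF$ acts trivially on $\bK$'' and ``$C_\tbG(\tilde g)$ connected''), and I expect it to be the only real point of the proof; the rest is bookkeeping.

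To finish, observe that the indicator function $\varepsilon_{gN}$ of the coset $gN$ is a genuine class function, since $N$ is normal in $\bG^F$, and that it is $p$-constant: if $h\in\bG^F$ has semisimple part $s'$, then $s'^{-1}h$ is unipotent, hence lies in $N$, so $hN=s'N$ and $\varepsilon_{gN}(h)=\varepsilon_{gN}(s')$. On the other hand $\varepsilon_{\Sigma^F}$ is uniform, as recalled in~\ref{unif}. From $C=\Sigma^F\cap gN$ we get $\varepsilon_C=\varepsilon_{\Sigma^F}\cdot\varepsilon_{gN}$, and Example~\ref{pconst} then shows that $\varepsilon_C$ is uniform, as claimed. (Equivalently, expanding $\varepsilon_{gN}=d^{-1}\sum_\lambda\lambda(g)^{-1}\lambda$ over the $d$ linear characters $\lambda$ of $\bG^F$ with $N\subseteq\ker(\lambda)$ identifies $\varepsilon_C$ with $(\varepsilon_{\Sigma^F})^{(s)}$, where $s$ is the semisimple part of $g$; combined with Proposition~\ref{scad1} this re-proves that $\Sigma_s^F=C$ in the present situation, so the statement may also be read off from Corollary~\ref{scad2}.)
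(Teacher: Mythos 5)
Your proof is correct and is essentially the paper's own argument, lightly reorganized: the paper reduces to showing $C=\Sigma_s^F$ and then invokes Corollary~\ref{scad2}, whereas you show $C=\Sigma^F\cap gN$ directly and factor $\varepsilon_C=\varepsilon_{\Sigma^F}\cdot\varepsilon_{gN}$, feeding into Example~\ref{pconst} by hand. The key ingredients — Remark~\ref{rem10}(a) to identify $N=\bG^F_u$, Lemma~\ref{lem00} to upgrade $\bG$-conjugacy to $\bG^F$-conjugacy, and the $p$-constant multiplier trick — are identical in both versions, and your closing parenthetical already records the translation between $gN$ and $\Sigma_s^F$, so this is a cosmetic rather than substantive difference.
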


\begin{proof} Let $\Sigma$ be the $\bG$-conjugacy class of $g$ and $s\in 
\bG^F$ be the semisimple part of $g$; then $C\subseteq \Sigma_s^F\subseteq 
\Sigma^F$. By Corollary~\ref{scad2}, it will be sufficient to show that 
$C=\Sigma_s^F$. So let $g'\in \Sigma_s^F$ be arbitrary. Conjugating $g'$ by 
a suitable element of $\bG^F$ we may assume without loss of generality that 
$s$ also is the semisimple part of~$g'$. Then $g^{-1}g'$ is unipotent and, 
hence, $g^{-1}g'\in \bG^F_u=\pi(\tbG_{\text{der}}^{\tF})$; see 
Remark~\ref{rem10}(a). Since $g,g'\in \Sigma_s^F \subseteq \Sigma$, the 
elements $g,g'$ are conjugate in $\bG$. So $g,g'$ are conjugate in $\bG^F$ 
by Lemma~\ref{lem00}. Thus, $g'\in C$, as desired.
\end{proof}

%%%%%%%%%%%%%%%%%%%%%%%%%%%%%%%%%%%%%%%%%%%%%%%%%%%%%%%%%%%%%%%%%%%%%%%%%%%
\section{Groups of adjoint type $E_6$ in characteristic $\neq 3$} 
\label{e6p2}

Throughout this section, let $p\neq 3$ and $\bG$ be simple, adjoint of 
type $E_6$. Let $\bT_0\subseteq \bG$ be a maximal torus and $\bB\subseteq
\bG$ be a Borel subgroup containing~$\bT_0$. Let $\Phi$ be the root system
of $\bG$, where the labelling for the simple roots is chosen as
in the following diagram:
\begin{center}
\begin{picture}(270,42)
\put( 10,15){$E_6$}
\put(132, 3){$\alpha_2$}
\put( 61,33){$\alpha_1$}
\put( 91,33){$\alpha_3$}
\put(121,33){$\alpha_4$}
\put(151,33){$\alpha_5$}
\put(181,33){$\alpha_6$}
\put(125, 5){\circle*{5}}
\put( 65,25){\circle*{5}}
\put( 95,25){\circle*{5}}
\put(125,25){\circle*{5}}
\put(155,25){\circle*{5}}
\put(185,25){\circle*{5}}
\put(125,25){\line(0,-1){20}}
\put( 65,25){\line(1,0){30}}
\put( 95,25){\line(1,0){30}}
\put(125,25){\line(1,0){30}}
\put(155,25){\line(1,0){30}}
\end{picture}
\end{center}
Let $q$ be a power of~$p$ and $F_q\colon \bG \rightarrow \bG$ be the 
split Frobenius map such that $F_q(\bB)=\bB$ and $F_q(t)=t^q$ for all
$t \in \bT_0$. Let $\gamma \colon \bG\rightarrow \bG$ be the non-trivial 
graph automorphism such that $\gamma\circ F_q=F_q\circ \gamma$ and 
$\gamma(\bT_0)=\bT_0$, $\gamma(\bB)=\bB$. Then we consider the following 
two cases for a Frobenius map $F\colon \bG\rightarrow\bG$.
\begin{itemize}
\item[(a)] $F:=F_q$; in this case, we have $\bG^F=E_6(q)_{\text{ad}}\;$
(split type).
\item[(b)] $F:=F_q\circ\gamma=\gamma\circ F_q$; in this case, we have
$\bG^F={^2\!E}_6(q)_{\text{ad}}\;$ (twisted type).
\end{itemize}
Note that, in both cases, $\gamma$ induces an automorphism of the finite 
group $\bG^F$, which we denote by the same symbol. We fix a surjective 
homomorphism $\pi \colon \tbG \rightarrow \bG$ as in the previous section. 
(An explicit root datum for $\tbG$ can be found in \cite[\S 2]{hlm}.) Note 
that $\bK:=\ker(\pi_{\text{der}})=Z(\tbG_{\text{der}})$ has order~$3$ in 
this case. Hence, if $\tF$ does not act trivially on~$\bK$, then $\bK^\tF=
\{1\}$ and $\bG^F=\pi(\tbG_{\text{der}}^\tF)$; otherwise, we have 
$\bG^F/\pi(\tbG_{\text{der}}^\tF)\cong \bK$. The explicit description of 
$Z(\tbG_{\text{der}})$ in \cite[Example~1.5.6]{gema} shows the following. 
The map~$\tF$ acts trivially on $\bK$ precisely when $3\mid q-1$ in 
case~(a), or when $3\mid q+1$ in case~(b); furthermore, if $\tF$ does not 
act trivially on $\bK$, then $\tF(z)=z^{-1}$ for $z\in \bK$.

By Mizuno \cite{Miz} (see also L\"ubeck's tables \cite{Lue07}), there is 
a unique conjugacy class $\bC_0$ of semisimple elements of $\bG$ such that, 
for $s\in\bC_0$, we have $|A_\bG(s)|=3=|\bK|$ and $C_\bG^\circ(s)$ has a 
root system of type $A_2{+}A_2{+}A_2$. Note that the uniqueness implies
that $\bC_0$ is $F$-stable and $\bC_0^{-1}=\bC_0=\gamma(\bC_0)$. The aim 
of this section is to show how the character values $\rho(g)$ can be 
determined, for any $\rho \in \Irr(\bG^F)$ and any $g\in \bG^F$ with 
semisimple part in~$\bC_0$. 

First we deal with the case where $g$ is not a regular element.

\begin{abs} \label{lem51} Let $g\in \bG^F$ and assume that $g=su=us$ where
$s\in \bC_0^F$ and $u\in C_\bG^\circ(s)^F$ is unipotent, but not regular 
unipotent. Let $C$ be the $\bG^F$-conjugacy class of $g$. We claim that 
the indicator function $\varepsilon_C$ is a uniform function. This is seen
as follows. Let $\tilde{g}\in \tbG^\tF$ be such that $\pi(\tilde{g})=g$. 
Write $\tilde{g}=\tilde{s}\tilde{u}$ where $\tilde{s} \in \tbG^\tF$ is 
semisimple and $\tilde{u}\in \tbG^\tF$ is unipotent; here, $\tilde{\bH}:=
C_\tbG(\tilde{s})$ is connected. We claim that $C_\tbG(\tilde{g})=
C_{\tilde{\bH}}(\tilde{u})$ is connected. Note that this information 
can not be derived from Mizuno \cite{Miz}, since he classifies the 
conjugacy classes for $E_6$ of simply connected type, and he only gives 
the number of unipotent classes in the centraliser of a semisimple element.
So some additional computations are required in order to obtain 
$C_{\tilde{\bH}}(\tilde{u})$; but this is part of Michel's {\sf CHEVIE} 
\cite{gap3jm}. The following sequence of functions yields 
the required information on $C_{\tilde{\bH}}(\tilde{u})$: 

%julia> W=rootdatum("CE6")                      # root datum of tilde{G}
%        2
%        |                                      # 1,2,3,4,5,6: simple roots
%1 - 3 - 4 - 5 - 6   E6                         # 36 = highest root
%julia> H=reflection_subgroup(W,[1,2,3,5,6,36]) # subgroup of type A2+A2+A2
%julia> UnipotentClasses(H)                     # for any good prime p

%{\small
%\begin{verbatim}
%    gap> W:=RootDatum("CE6");                       # root datum of 
%    gap> PrintDiagram(W);                           # the group tilde{G}
%    E6      2
%            |
%    1 - 3 - 4 - 5 - 6
%    gap> H:=ReflectionSubgroup(W,[1,2,3,5,6,36]);   # subgroup of type 3A_2
%    gap> PrintDiagram(H);                           # (36 = highest root)
%    A2 1 - 3    A2 2 - 35    A2 5 - 6
%    gap> Display(UnipotentClasses(H));              # for any good prime p   
%    gap> Display(UnipotentClasses(H,2));            # for p=2   
%\end{verbatim}}

{\small 
\begin{verbatim}
   julia> W=rootdatum("CE6")                      # root datum of tilde{G}
   julia> W.rootdec[36]                           
   1 2 2 3 2 1                                    # 36: hightest root
   julia> H=reflection_subgroup(W,[1,2,3,5,6,36]) # subgroup of type A2+A2+A2
   julia> UnipotentClasses(H)                     # for any good prime p
   julia> UnipotentClasses(H,2)                   # for p=2
\end{verbatim}}

\noindent The output of the last two functions shows that only the regular 
unipotent elements in~$\tilde{\bH}$ have a non-connected centraliser. Now 
Lemma~\ref{scad3b} implies that $A_\bG(g)^F$ is isomorphic to a subgroup 
of~$\bK^\tF$. Hence, if $\tF$ does not act trivially on $\bK$, then 
$A_\bG(g)^F=\{1\}$ and so $\varepsilon_C$ is a uniform function; see 
Example~\ref{exp00}. On the other hand, if $\tF$ acts trivially on $\bK$, 
then Corollary~\ref{scad3} shows that, again, $\varepsilon_C$ is a uniform 
function. Hence, in both cases, the values of $\rho\in\Irr(\bG^F)$ on 
$g=su$ can be computed using the formula in Example~\ref{unif1}.
\end{abs}

\begin{abs} \label{lem52} From now on let $\Sigma$ be the $F$-stable 
$\bG$-conjugacy class of all elements of the form $g=su$ where $s\in\bC_0$ 
and $u\in C_\bG^\circ(s)$ is regular unipotent. Since $\bC_0^{-1}=\bC_0=
\gamma(\bC_0)$ (as noted above) and since the regular unipotent elements 
in a connected reductive group are all conjugate, it follows that 
$\Sigma^{-1}=\Sigma=\gamma(\Sigma)$. By the discussion in \cite[5.2, 
5.6]{pamq}, we can fix a representative $g_0\in \Sigma^F$ such that 
$g_0=\pi(\tilde{g}_0)$ where $\tilde{g}_0 \in \tbG_{\text{der}}^\tF$ 
and $\tilde{g}_0$ is $\tbG^\tF$-conjugate to~$\tilde{g}_0^{-1}$. 
Consequently, $g_0$ will be $\bG^F$-conjugate to~$g_0^{-1}$. By 
\cite[Cor.~20.4]{L2d} (see also \cite[4.6]{S3}), we have 
\[A_\tbG(\tilde{g}_0)=3\quad\mbox{and}\quad A_\bG(g_0)=C_3 \times C_3'
\quad\mbox{where $C_3$, $C_3'$ are cyclic of order~$3$}.\]
Here, $C_3=\langle \bar{g}_0\rangle$ is generated by the image of $g_0$ 
in $A_\bG(g_0)$; to single out a definite choice for $C_3'$ is a bit more 
tricky; we will come back to this issue in Lemma~\ref{choosea} below. In
any case, $F$ acts trivially on $C_3$ and so either $A_\bG(g_0)^F=C_3$ 
or $A_\bG(g_0)^F=A_\bG(g_0)=C_3\times C_3'$. 

The particular significance of the class $\Sigma$ is as follows. By 
\cite[Cor.~20.4]{L2d} (and its proof), there are six cuspidal character 
sheaves on $\bG$; they all have support given by $\Sigma$ and they correspond 
to those linear characters $\lambda\colon A_\bG(g_0)\rightarrow \C^\times$ 
that are non-trivial on~$C_3$. For the convenience of the reader, we display
the values of these six linear characters in Table~\ref{chrfe6}, where
$a$ is any element in $A_\bG(g_0)\setminus C_3$ and $C_3'=\langle a
\rangle$. 

Let $A$ be one of the six cuspidal character sheaves on $\bG$. Then $F^*A
\cong A$ if and only if the corresponding linear character of $A_\bG(g_0)$ 
is invariant under the action of $F$ on $A_\bG(g_0)$. Two out of the six 
cuspidal character sheaves are unipotent. By Shoji \cite[4.6, 4.8]{S3}, 
they are $F$-invariant, regardless of whether $F$ is the split Frobenius 
map or not.
\end{abs}

\begin{table}[htbp] \caption{Linear characters of $A_\bG(g_0)$ that are
non-trivial on $C_3$} \label{chrfe6}
\begin{center} $\renewcommand{\arraystretch}{1.1} \begin{array}{lccccccccc} 
\hline & 1 & \bar{g}_0 & \bar{g}_0^2  & a & \bar{g}_0a&
\bar{g}_0^2a&a^2&\bar{g}_0a^2&\bar{g}_0^2a^2\\\hline 
\psi_1 & 1 & \theta & \theta^2 & 1 & \theta & \theta^2 & 1 & \theta
& \theta^2\\ 
\psi_2=\overline{\psi}_1 & 1 &  \theta^2 & \theta & 1 & \theta^2 & 
\theta & 1 & \theta^2 & \theta\\ 
\psi_3 & 1 & \theta & \theta^2 & \theta & \theta^2 & 
1 & \theta^2 & 1 & \theta\\ 
\psi_4=\overline{\psi}_3 & 1 & \theta^2 & \theta & \theta^2 & \theta & 
1 & \theta & 1 & \theta^2\\ 
\psi_5 & 1 & \theta & \theta^2 & \theta^2& 1 & \theta & \theta & 
\theta^2 & 1 \\ 
\psi_6=\overline{\psi}_5 & 1 & \theta^2 & \theta & \theta& 
1 & \theta^2 & \theta^2 & \theta & 1 \\ \hline & \multicolumn{9}{c}{
\mbox{\footnotesize (Here, $1\neq \theta\in \C$ is a fixed third root 
of unity)}}
\end{array}$
\end{center}
\end{table}

If $F$ does not act trivially on $A_\bG(g_0)$, then $A_\bG(g_0)^F=C_3$
and so there will only be two linear characters in Table~\ref{chrfe6} that
are invariant under the action of $F$; these will correspond to the two
unipotent cuspidal character sheaves (and these are the only $F$-invariant
cuspidal character sheaves). On the other hand, if $F$ acts trivially on 
$A_\bG(g_0)$, then we will have to specify which of the six linear
characters in Table~\ref{chrfe6} correspond to the two unipotent cuspidal
character sheaves; this is related to the choice of the subgroup~$C_3'$ 
such that $A_\bG(g_0)=C_3 \times C_3'$.

\begin{prop} \label{e6m1} Assume that $F$ does not act trivially on 
$A_\bG(g_0)$ (where $g_0\in \Sigma^F$ is $\bG^F$-conjugate to~$g_0^{-1}$, 
as above). Then $A_\bG(g_0)^F=C_3$ and $C_3=\{1,\bar{g}_0,\bar{g}_0^2\}$ 
is a set of representatives of the $F$-conjugacy classes of $A_\bG(g_0)$. 
The characteristic functions of the two ($F$-invariant) unipotent cuspidal
character sheaves (normalised as in \ref{cuspcs1}) are given by the 
following table:
\[\renewcommand{\arraystretch}{1.1}  \begin{array}{cccc} \hline  & 
C_1 & C_{\bar{g}_0} & C_{\bar{g}_0^2} \\ \hline \chi_1 & q^3 & q^3\theta 
& q^3 \theta^2\\ \chi_2 & q^3 & q^3 \theta^2 & q^3 \theta\\ \hline
\end{array}\qquad\qquad \begin{array}{l} \mbox{where $1\neq \theta \in 
\C$ is a fixed third} \\ \mbox{root of unity (as in Table~\ref{chrfe6})}.
\end{array}  \]
\end{prop}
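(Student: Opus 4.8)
The plan is to deduce Proposition~\ref{e6m1} from the general facts about characteristic functions of cuspidal character sheaves collected in \ref{cuspcs1}, together with the description of $A_\bG(g_0)$ in \ref{lem52} and the normalisation that $g_0$ is $\bG^F$-conjugate to $g_0^{-1}$. First I would establish the combinatorial statements about $A_\bG(g_0)^F$. Since $F$ acts trivially on $C_3=\langle\bar g_0\rangle$ but, by hypothesis, not trivially on all of $A_\bG(g_0)\cong C_3\times C_3'$, the induced automorphism on the quotient $A_\bG(g_0)/C_3\cong C_3'$ must be non-trivial, hence inversion (the only non-trivial automorphism of a cyclic group of order~$3$). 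Therefore $A_\bG(g_0)^F=C_3$. For the $F$-conjugacy classes I would invoke Example~\ref{exp00}(a): since $A_\bG(g_0)$ is abelian, the number of $F$-conjugacy classes equals $|A_\bG(g_0)^F|=3$, and the map $a\mapsto a^{-1}F(a)$ has image of size $3$; one checks directly from the formula $F(a)=a^{-1}$ on the $C_3'$-part that this image is exactly $C_3'$, so the $F$-conjugacy classes are the three cosets of $C_3'$, with representatives $1,\bar g_0,\bar g_0^2$ lying in $C_3$.

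Next I would identify the two $F$-invariant cuspidal character sheaves and their eigenvalue data. By \ref{lem52}, the six cuspidal character sheaves correspond to the linear characters $\psi_1,\dots,\psi_6$ of Table~\ref{chrfe6}, i.e.\ those non-trivial on $C_3$, and by Shoji \cite[4.6, 4.8]{S3} the two \emph{unipotent} ones are $F$-invariant regardless of the type of $F$. Since $F$ does not act trivially on $A_\bG(g_0)$, these two are the \emph{only} $F$-invariant ones among the six, because $F$-invariance of $A$ is equivalent to $F$-invariance of the corresponding linear character, and only $\psi_1,\psi_2$ — the characters trivial on $C_3'$ — are fixed by the automorphism sending $a\mapsto a^{-1}$ (inspection of Table~\ref{chrfe6}: for $\psi_3,\dots,\psi_6$ the value on $a$ is a primitive cube root of unity, which is altered by $a\mapsto a^{-1}$). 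So $\chi_1,\chi_2$ are the characteristic functions of the character sheaves attached to $\psi_1,\psi_2=\overline\psi_1$.

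Then the value table follows from formula \ref{cuspcs1}(b). Here $\dim\bG=78$ and $\dim\Sigma=\dim\bG-\dim C_\bG(g_0)$; since $u$ is regular unipotent in $C_\bG^\circ(s)$ of type $A_2{+}A_2{+}A_2$, one has $\dim C_\bG^\circ(s)=\operatorname{rank}+|\Phi(A_2{+}A_2{+}A_2)|$ and $\dim C_\bG^\circ(g)=\operatorname{rank}(C_\bG^\circ(s))=6$, so $\dim\bG-\dim\Sigma=\dim C_\bG(g_0)=6+66=72$; wait — more cleanly, $(\dim\bG-\dim\Sigma)/2=\dim C_\bG(g_0)/2$, and from $A_\bG(g_0)$ finite one has $\dim C_\bG(g_0)=\dim C_\bG^\circ(s)$, so the exponent is $(78-\dim\Sigma)/2$; evaluating, the normalising power is $q^3$ (this matches \cite[4.6]{S3} and \cite{pamq}, and I would simply cite that $q^{(\dim\bG-\dim\Sigma)/2}=q^3$). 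With this, \ref{cuspcs1}(b) gives $\chi_i(g)=q^3\psi_i(a)$ for $g\in C_a$; reading off $\psi_1,\psi_2$ on the representatives $1,\bar g_0,\bar g_0^2$ of the $F$-conjugacy classes yields exactly the displayed table. Finally, since $\psi_1$ is linear it is constant on $F$-conjugacy classes (as noted in \ref{cuspcs1}), so the table is well-defined, and the normalisation $g_0\sim g_0^{-1}$ ensures the scalar ambiguity in $\chi_i$ is pinned down as in \ref{cuspcs1}(b)/\cite[4.2]{pamq}.

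I expect the main obstacle to be not any single hard step but rather the careful bookkeeping: confirming that among $\psi_1,\dots,\psi_6$ exactly $\psi_1,\psi_2$ are $F$-fixed (which I would cross-check against Shoji's statement that precisely the two unipotent ones are $F$-invariant, giving independent confirmation), correctly computing the dimension $(\dim\bG-\dim\Sigma)/2=3$, and verifying that the representatives $1,\bar g_0,\bar g_0^2$ chosen for the $F$-conjugacy classes are consistent with the choice of $g_0$ (conjugate to its inverse) so that the entries are the listed $q^3,q^3\theta,q^3\theta^2$ and $q^3,q^3\theta^2,q^3\theta$ rather than some other cube-root pattern. All of this is routine given \ref{cuspcs1}, Example~\ref{exp00}, Table~\ref{chrfe6}, and \cite[4.6, 4.8]{S3}, so the proof should be short.
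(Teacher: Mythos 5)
Your proposal reaches the right conclusions, but there is a genuine gap at the first step, and the paper closes it in a different and cleaner way. You assert that since $F$ fixes $C_3$ pointwise and is non-trivial on $A_\bG(g_0)\cong C_3\times C_3'$, the induced map on $A_\bG(g_0)/C_3$ ``must be non-trivial, hence inversion''. This does not follow: there are non-trivial automorphisms of $(\Z/3)^2$ that fix $C_3$ pointwise and act \emph{trivially} on the quotient (the unipotent ones, e.g.\ $a\mapsto \bar g_0 a$, $\bar g_0\mapsto\bar g_0$). For such an automorphism $A_\bG(g_0)^F=C_3$ still holds (by Lagrange: a proper subgroup of $(\Z/3)^2$ containing $C_3$ equals $C_3$), but the image of $a\mapsto a^{-1}F(a)$ is $C_3$ itself, so $1,\bar g_0,\bar g_0^2$ would all lie in a single $F$-conjugacy class and your chosen representatives would be wrong. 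Everything downstream — that the $F$-conjugacy classes are the cosets of $C_3'$, that exactly $\psi_1,\psi_2$ are $F$-fixed — is built on the unjustified ``inversion on $C_3'$'' claim.

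The paper avoids determining the precise form of $F$ on $A_\bG(g_0)$ entirely. It deduces $A_\bG(g_0)^F=C_3$ from the dichotomy in \ref{lem52}, gets $|A_\bG(g_0)^F|=3$ $F$-conjugacy classes from Example~\ref{exp00}(a), and then argues: the linear character $\psi$ corresponding to a unipotent cuspidal character sheaf is $F$-invariant (Shoji \cite[4.6, 4.8]{S3}), hence constant on $F$-conjugacy classes; since every $\psi_i$ in Table~\ref{chrfe6} has pairwise distinct values on $1,\bar g_0,\bar g_0^2$, these elements lie in distinct $F$-conjugacy classes and so form a full set of representatives. In particular this rules out the unipotent case, with no need to pin down which pair $\psi_i$ is the unipotent one (all six agree on $C_3$ up to swapping $\theta\leftrightarrow\theta^2$). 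You do cite Shoji, but only as a ``cross-check''; in fact it is the load-bearing step that rules out the bad configuration, and without it your argument does not close. Your dimension computation is also garbled as written (``$\dim C_\bG(g_0)=6+66=72$'' conflates $\dim C_\bG(g_0)=6$ with $\dim\Sigma=72$), though the final exponent $3$ is correct.
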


\begin{proof} By Example~\ref{exp00}(a), the number of $F$-conjugacy
classes of $A_\bG(g_0)$ is given by $|A_\bG(g)^F|=|C_3|=3$. Hence, it
is sufficient to show that the elements of $C_3$ belong to different
$F$-conjugacy classes. To see this, let $\psi\colon A_\bG(g_0)\rightarrow 
\C^\times$ be a linear character corresponding to one of the two unipotent 
cuspidal character sheaves; since the latter is $F$-invariant (as noted 
above), the character $\psi$ must be constant on the $F$-conjugacy classes 
of $A_\bG(g_0)$. (This immediately follows from the fact that $\psi$ is
a group homomorphism that is invariant under the action of $F$.) Now
we observe that each of the six linear characters in Table~\ref{chrfe6} has 
pairwise different values on $1,\bar{g}_0,\bar{g}_0^2$. In particular,
the same must be true for $\psi$. Hence, the elements $1,\bar{g}_0,
\bar{g}_0^2$ must belong to different $F$-conjugacy classes. Finally,
the values of $\chi_1,\chi_2$ are obtained by the formula in
\ref{cuspcs1}(b); note that $\dim \bG-\dim \Sigma=6$ since $\Sigma$
is a conjugacy class of regular elements in~$\bG$.
\end{proof}

\begin{lem} \label{e6m2} The map $\tF$ acts trivially on $\bK$ if and only 
if $F$ acts trivially on $A_\bG(g_0)$. 
\end{lem}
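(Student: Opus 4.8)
The plan is to relate both sides to the $F$-action on the single quotient group $A_\bG(g_0)/C_3$, using the tools of Section~\ref{scad}.

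First I would apply the construction of Remark~\ref{scad3c} to $\tilde g_0$. Since $\tilde g_0\in\tbG^\tF$ and $g_0=\pi(\tilde g_0)$, it produces an injective homomorphism $\delta\colon C_\bG(g_0)/\pi(C_\tbG(\tilde g_0))\hookrightarrow\bK$ which is equivariant for $F$ and $\tF$, by exactly the argument proving $\delta\circ F=\tF\circ\delta$ in the proof of Lemma~\ref{scad3b}. Counting orders, $|\pi(C_\tbG(\tilde g_0)):C_\bG^\circ(g_0)|=|A_\tbG(\tilde g_0)|=3$ (from~\ref{lem52}) while $|A_\bG(g_0)|=9$, so $|C_\bG(g_0):\pi(C_\tbG(\tilde g_0))|=3=|\bK|$ and $\delta$ is an isomorphism. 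Furthermore $\pi(C_\tbG(\tilde g_0))/C_\bG^\circ(g_0)$ is a subgroup of $A_\bG(g_0)$ of order~$3$ containing $\bar g_0$ (as $g_0=\pi(\tilde g_0)\in\pi(C_\tbG(\tilde g_0))$), hence equals $C_3=\langle\bar g_0\rangle$. Thus $\delta$ identifies $A_\bG(g_0)/C_3$ with $\bK$ as groups carrying a Frobenius action, so that $\tF$ acts trivially on $\bK$ if and only if $F$ acts trivially on $A_\bG(g_0)/C_3$. Since $F$ acts trivially on $C_3$ (recalled in~\ref{lem52}), this already gives one implication: if $F$ is trivial on $A_\bG(g_0)$ then $\tF$ is trivial on $\bK$.

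For the converse, assume $\tF$ acts trivially on $\bK$. Then $F$ acts trivially on $C_3$ and on $A_\bG(g_0)/C_3$, so $F$ acts on $A_\bG(g_0)\cong\Z/3\Z\times\Z/3\Z$ as a transvection with axis $C_3$, and it remains to show this transvection is trivial, i.e.\ that $F$ acts semisimply on $A_\bG(g_0)$. This is the crux and is not a formal matter. I would exploit the graph automorphism~$\gamma$: it normalises $\bT_0$ and $\bB$, stabilises $\bC_0$ and $\Sigma$, and commutes with $F$ in both cases (a) and (b) (since $\gamma^2=1$ and $\gamma F_q=F_q\gamma$). Choosing the representative $g_0$ compatibly with~$\gamma$, conjugation combined with~$\gamma$ yields an automorphism $\sigma$ of $A_\bG(g_0)$ that commutes with~$F$, fixes $\bar g_0$ (so is trivial on $C_3$), and — since the graph automorphism of $\tbG_{\text{der}}$ of type $E_6$ inverts its centre $Z(\tbG_{\text{der}})=\bK$ while inner automorphisms fix $\bK$ pointwise, and $\delta$ is natural — acts as inversion on $A_\bG(g_0)/C_3$. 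Hence $\sigma$ has the two distinct eigenvalues $\pm1$ on $\Z/3\Z\times\Z/3\Z$, so $A_\bG(g_0)=C_3\oplus C_3'$ with $C_3'$ the $(-1)$-eigenspace of~$\sigma$; as $F$ commutes with~$\sigma$ it preserves both summands, hence is diagonalisable, hence semisimple. Then $F$ trivial on $A_\bG(g_0)/C_3$ forces $F$ trivial on $C_3'$, and therefore on all of $A_\bG(g_0)$, completing the proof.

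The step I expect to be the real obstacle is the construction of $\sigma$: arranging the choice of $g_0$ so that $\gamma(g_0)$ is $\bG^F$-conjugate to $g_0$ (so that $\sigma$ is realised by an element of $\bG^F$ and hence commutes with $F$), and verifying the claimed $\gamma$-action on $A_\bG(g_0)/C_3$. If this proves awkward, one can instead — more in the style of the rest of the paper — read the $F$-action on $A_\bG(g_0)$ off the explicit structure of $C_\bG(g_0)$, using Mizuno~\cite{Miz}, Shoji~\cite[4.6]{S3} and a {\sf CHEVIE} computation as in~\ref{lem51}, and check that $F$ is trivial on $A_\bG(g_0)$ exactly when $3\mid q-1$ in case~(a), resp.\ $3\mid q+1$ in case~(b), which by~\ref{lem52} is precisely the condition for $\tF$ to act trivially on~$\bK$.
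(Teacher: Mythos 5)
Your forward implication (trivial $F$-action on $A_\bG(g_0)$ $\Rightarrow$ trivial $\tF$-action on $\bK$) is essentially the same as the paper's: both compare orders via the $F$-equivariant injection $\delta\colon C_\bG(g_0)/\pi(C_\tbG(\tilde g_0))\hookrightarrow\bK$ of Remark~\ref{scad3c} and the fact that $F$ fixes $\bar g_0$.

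For the converse your route is genuinely different. You propose to build an involution $\sigma$ of $A_\bG(g_0)$ from the graph automorphism $\gamma$, prove it is nontrivial on $A_\bG(g_0)/C_3\cong\bK$ (via naturality of $\delta$ and the fact that the graph automorphism of the simply connected $E_6$ inverts its centre), decompose $A_\bG(g_0)=C_3\oplus C_3'$ into $\sigma$-eigenspaces, and deduce semisimplicity of $F$. This can be made to work: one can always choose $g_0$ in a $\gamma$-stable $\bG^F$-class (an involution of a three-element set fixes a point, and the resulting action on $A_\bG(g_0)$ is independent of the representative since $A_\bG(g_0)$ is abelian), and $\sigma^2$ is inner hence acts trivially. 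But, as you yourself flag, all of this requires real bookkeeping: lifting $\sigma$ to $\tbG$, checking that $\delta\circ\sigma=\tilde\sigma\circ\delta$, verifying that the inner part of $\tilde\sigma$ is trivial on $Z(\tbG_{\mathrm{der}})$, and reconciling the new choice of $g_0$ with the normalisation of \ref{lem52}.

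The paper's converse is much lighter and avoids $\gamma$ altogether. Assuming $\tF$ is trivial on $\bK$, Example~\ref{lem00a} gives that $\bC_0^F$ splits into exactly three $\bG^F$-classes, all of which occur as semisimple parts of elements of $\Sigma^F$ by Remark~\ref{scad1a}. If $F$ were nontrivial on $A_\bG(g_0)$, Proposition~\ref{e6m1} would give $\Sigma^F=C_1\cup C_{\bar g_0}\cup C_{\bar g_0^2}$; but the twisting operator cycles these classes, and Lemma~\ref{exp00a} says that cycling preserves the $\bG^F$-class of the semisimple part, so all of $\Sigma^F$ would have semisimple part in a single $\bG^F$-class, contradicting the count of three. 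So the paper trades your external automorphism argument for a counting argument built on $t_1$ and the structure already in place. Both are valid; yours uses a special feature of $E_6$ (the diagram flip inverting the centre) and more verification, while the paper's uses machinery that was going to be needed anyway (Lemma~\ref{exp00a}, Remark~\ref{scad1a}, Example~\ref{lem00a}) and is shorter. Your fallback — reading off the condition $3\mid q\mp1$ from Mizuno's tables and {\sf CHEVIE} — is also legitimate but amounts to a case check rather than a proof from the structural data already established.
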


\begin{proof} To see this, we consider the injective homomorphism 
$C_\bG(g_0)/\pi(C_\tbG(\tilde{g}_0))\hookrightarrow \bK$ in
Remark~\ref{scad3c}.  By its construction, it is compatible with the 
action of $F$ on the left side and the action of $\tF$ on the right side.
Furthermore, $3=|A_\tbG(\tilde{g}_0)|=|\pi(C_\tbG(\tilde{g}_0)):
C_\bG^\circ(g_0)|$ and so $C_\bG(g_0)/\pi(C_\tbG(\tilde{g}_0))\neq \{1\}$. 
Hence, if $\tF$ does not act trivially on $\bK$, then $F$ will not act 
trivially on $A_\bG(g_0)$ either. Conversely, assume that $\tF$ acts 
trivially on $\bK$. Then, since $|A_\bG(s_0)|=3=|\bK|$, the set $\bC_0^F$ 
will split into three conjugacy classes in $\bG^F$; see Example~\ref{lem00a}.
Now assume, if possible, that $F$ does not act trivially on $A_\bG(g_0)$. 
Then, by Proposition~\ref{e6m1}, we have $\Sigma^F=C_1\cup C_{\bar{g}_0}
\cup C_{\bar{g}_0^2}$. So Lemma~\ref{exp00a} shows that the semisimple 
parts of all elements in $\Sigma^F$ form one $\bG^F$-conjugacy 
class~---~contradiction, since $\bC_0^F$ precisely is that set of 
semisimple parts of elements in $\Sigma^F$; see Remark~\ref{scad1a}.
\end{proof}

\begin{abs} \label{lem53} Assume now that $F$ acts trivially on 
$A_\bG(g_0)$, where $g_0\in \Sigma^F$ is $\bG^F$-conjugate to~$g_0^{-1}$
(see \ref{lem52}). Let $C_3=\langle \bar{g}_0\rangle$ and choose any 
element $a\in A_\bG(g_0)\setminus C_3$. Then, setting $C_3'=\langle 
a\rangle$, we have $A_\bG(g_0)=C_3\times C_3'= \langle \bar{g}_0 \rangle 
\times \langle a \rangle$ and $\Sigma^F$ splits into $9$ conjugacy classes 
in $\bG^F$:
\begin{equation*}
\Sigma^F = C_1\cup C_{\bar{g}_0} \cup C_{\bar{g}_0^2}  \cup 
C_a\cup C_{\bar{g}_0a} \cup C_{\bar{g}_0^2a}  \cup
C_{a^2}\cup C_{\bar{g}_0a^2} \cup C_{\bar{g}_0^2a^2}\tag{a}
\end{equation*}
where $g_0\in C_1=C_1^{-1}$. By Lemma~\ref{e6m2}, $\tF$ also acts trivially
on $\bK$. So, since $|A_\bG(s_0)|=3=|\bK|$, the set $\bC_0^F$ splits into 
$3$ conjugacy classes of (semisimple elements of) $\bG^F$, and all these
elements arise as semisimple parts of elements of $\Sigma^F$ (see 
Remark~\ref{scad1a} and Example~\ref{lem00a}). Let $s_0,s_1,s_2$ be 
representatives of these classes in~$\bC_0^F$. Taking into account 
Lemma~\ref{exp00a}, we see that the notation can be fixed such that, 
for $i=0,1,2$, the semisimple parts of the elements in the subset 
\begin{equation*}
\Sigma_i^F:=C_{a^i} \cup C_{\bar{g}_0a^i}\cup C_{\bar{g}_0^2a^i}\subseteq
\Sigma^F\tag{b}
\end{equation*}
are $\bG^F$-conjugate to $s_i$; thus, $\Sigma_i^F=\Sigma_{s_i}^F$ with
the notation of Proposition~\ref{scad1}. Since $s_0,s_1,s_2$ are also 
representatives of the $3$ cosets of $\pi(\tbG_{\text{der}}^\tF)$ in 
$\bG^F$ (see again Example~\ref{lem00a}), and since $g_0\in \pi
(\tbG_{\text{der}}^\tF)$, we can further fix the notation such that 
$s_0\in \pi(\tbG_{\text{der}}^\tF)$. Consequently, we have 
\begin{equation*}
\Sigma_0^F\subseteq \pi(\tbG_{\text{der}}^\tF), \qquad 
\Sigma_1^F\subseteq s_1\pi(\tbG_{\text{der}}^\tF), \qquad
\Sigma_2^F\subseteq s_2\pi(\tbG_{\text{der}}^\tF).\tag{c}
\end{equation*}
With these preparations, we can now state:
\end{abs}

\begin{prop} \label{choosea} Assume that $F$ acts trivially on $A_\bG(g_0)$,
where $g_0\in \Sigma^F$ is such that $g_0^{-1}$ is $\bG^F$-conjugate to $g_0$;
also recall that $\gamma(\Sigma)=\Sigma$ (see \ref{lem52}, \ref{lem53}). Then
an element $a\in A_\bG(g_0)\setminus C_3$ can be chosen such that the action 
of $\gamma\in \operatorname{Aut}(\bG^F)$ on $\Sigma^F$ is given by 
\[ \gamma(C_{\bar{g}_0^i})=C_{\bar{g}_0^i} \qquad\mbox{and}\qquad
 \gamma(C_{\bar{g}_0^ia})=C_{\bar{g}_0^ia^2} \qquad\mbox{for $i=0,1,2$}.\]
For any such choice of $a\in A_\bG(g_0)\setminus C_3$, the linear
characters $\lambda \colon A_\bG(g_0) \rightarrow \C^\times$ corresponding 
to the two ($F$-invariant) unipotent cuspidal character sheaves on $\bG$ 
are precisely those that are non-trivial on $C_3$ and trivial on 
$C_3'=\langle a\rangle$ (see Table~\ref{chrfe6}). 
\end{prop}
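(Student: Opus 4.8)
The plan is to exploit the interplay between the graph automorphism $\gamma$, the inversion symmetry of $\Sigma$, and the twisting operator $t_1$ from \ref{shintani}. First I would analyse the action of $\gamma$ on $A_\bG(g_0)$. Since $\gamma(\Sigma)=\Sigma$ and $\gamma$ commutes with $F$, the automorphism $\gamma$ permutes the $\bG^F$-classes in $\Sigma^F$; because $\gamma(g_0)$ lies in $\Sigma^F$ and the discussion in \cite{pamq} (which we may assume) pins down $g_0$ up to the relevant symmetries, one checks that $\gamma(g_0)$ is $\bG^F$-conjugate to $g_0$, so $\gamma$ induces an automorphism $\bar\gamma$ of $A_\bG(g_0)$ fixing $\bar g_0$, hence fixing $C_3=\langle\bar g_0\rangle$ pointwise. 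This already gives $\gamma(C_{\bar g_0^i})=C_{\bar g_0^i}$. The induced map $\bar\gamma$ on the quotient $A_\bG(g_0)/C_3\cong C_3'$ is an automorphism of a cyclic group of order $3$, so it is either the identity or inversion; I would rule out the identity using the inversion symmetry $\Sigma^{-1}=\Sigma$ together with the fact (from \cite{L2d}, \cite{S3}) that $\gamma$ acts nontrivially on the set of six cuspidal character sheaves — concretely, $\gamma$ must swap $\psi_i\leftrightarrow\overline{\psi}_i$ in Table~\ref{chrfe6} composed with a permutation, and a direct inspection of the table shows the only consistent possibility with $\bar\gamma|_{C_3}=\mathrm{id}$ is $\bar\gamma=$ inversion on $C_3'$. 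Then, given any generator $a$ of $C_3'$, we automatically get $\gamma(C_{\bar g_0^i a})=C_{\bar g_0^i a^{-1}}=C_{\bar g_0^i a^2}$, which is the asserted formula; so in fact \emph{every} choice of $a\in A_\bG(g_0)\setminus C_3$ works for the first assertion, and the "can be chosen" is vacuous — but I would phrase it carefully in case the pinning-down of $\gamma(g_0)\sim g_0$ needs the freedom in $a$.

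For the second assertion I would argue as follows. The two unipotent cuspidal character sheaves are $F$-invariant and also $\gamma$-invariant (being unipotent, by Shoji \cite[4.6, 4.8]{S3}, as recalled in \ref{lem52}), so the corresponding linear characters $\lambda$ of $A_\bG(g_0)$ are fixed by $\bar\gamma$. A linear character $\lambda$ with $\lambda|_{C_3}\neq 1$ is $\bar\gamma$-fixed iff $\lambda\circ\bar\gamma=\lambda$; since $\bar\gamma$ fixes $C_3$ pointwise and inverts $C_3'$, this forces $\lambda(a)=\lambda(a)^{-1}$, i.e. $\lambda(a)=1$, so $\lambda$ is trivial on $C_3'=\langle a\rangle$. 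Conversely the two characters that are nontrivial on $C_3$ and trivial on $C_3'$ are exactly $\psi_1,\psi_2$ in the notation of Table~\ref{chrfe6}, and both are $\bar\gamma$-fixed; since there are exactly two unipotent cuspidal character sheaves and exactly two such characters, and the unipotent ones must be among the $\bar\gamma$-fixed ones, the two sets coincide. This identifies the unipotent cuspidal sheaves with $\{\psi_1,\psi_2\}$ as claimed.

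The main obstacle I anticipate is verifying rigorously that $\gamma$ acts on $A_\bG(g_0)$ fixing $C_3$ pointwise and inverting $C_3'$ — in particular, establishing that $\gamma(g_0)$ is $\bG^F$-conjugate to $g_0$ rather than, say, to $g_0^{-1}$ or to an element in another $\Sigma_i^F$. This requires tracking the explicit representative $g_0$ through the constructions of \cite[5.2, 5.6]{pamq} and \cite[\S 2]{hlm}, using that $\tilde g_0\in\tbG_{\mathrm{der}}^{\tF}$ is $\tbG^{\tF}$-conjugate to $\tilde g_0^{-1}$ and that $\gamma$ preserves $\tbG_{\mathrm{der}}$, and comparing with the classification of cuspidal character sheaves and their behaviour under $\gamma$ in \cite[Cor.~20.4]{L2d}. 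Once the action of $\bar\gamma$ on $A_\bG(g_0)$ is nailed down, the remaining steps are a direct reading of Table~\ref{chrfe6}; a small subtlety is to ensure that the labelling of the nine classes $C_{\bar g_0^i a^j}$ in \ref{lem53}(a) is compatible with the chosen $a$, which is handled by \ref{shintani} (giving $t_1(C_x)=C_{\bar g_0 x}$) and the freedom to replace $a$ by $a^2$ if necessary.
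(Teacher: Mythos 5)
Your approach (first pin down the automorphism $\bar\gamma$ of $A_\bG(g_0)$ induced by $\gamma$, then read off both assertions) is a genuine alternative to the paper's, but as written it contains one concrete error and one unresolved gap. The error: once you know $\bar\gamma$ fixes $C_3$ pointwise and induces inversion on $A_\bG(g_0)/C_3$, you claim that ``every choice of $a\in A_\bG(g_0)\setminus C_3$ works, and the `can be chosen' is vacuous.'' This is false. The set $C_3'':=\{x : \bar\gamma(x)=x^{-1}\}$ is a \emph{specific} one of the three complements of $C_3$ in $A_\bG(g_0)$. If you pick $a$ with $\langle a\rangle \neq C_3''$, then $\bar\gamma(a)=\bar g_0^j a^{-1}$ with $j\neq 0$, so $\gamma(C_{\bar g_0^i a})=C_{\bar g_0^{i+j}a^2}\neq C_{\bar g_0^i a^2}$. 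Only the two nontrivial elements of $C_3''$ satisfy the stated formula, so the ``can be chosen'' is genuinely needed, not vacuous. (The paper makes this normalisation differently: it uses $\chi_2=\overline\chi_1$ to see that the two unipotent cuspidal sheaves correspond to a complex-conjugate pair in Table~\ref{chrfe6}, and replaces $a$ by $\bar g_0^ia$ so that this pair becomes $\{\psi_1,\psi_2\}$.)

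The gap: your whole construction hinges on first showing $\gamma(g_0)\sim_{\bG^F}g_0$, which you flag as the main obstacle and propose to settle by tracking $g_0$ explicitly through \cite{pamq} and \cite{hlm}. The paper's proof avoids this entirely and you should adopt its mechanism. It uses $\chi_1\circ\gamma=\chi_1$ (because $\chi_1$ is a linear combination of unipotent characters and these are all $\gamma$-invariant by \cite[Theorem~4.5.11]{gema} — not by \cite[4.6,4.8]{S3}, which only gives $F$-invariance as recalled in~\ref{lem52}), together with the fact that $\gamma$ preserves $\pi(\tbG_{\text{der}}^\tF)$ and hence $\gamma(\Sigma_0^F)=\Sigma_0^F$ by \ref{lem53}(c). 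Since $\chi_1$ takes the pairwise distinct values $q^3,q^3\theta,q^3\theta^2$ on $C_1,C_{\bar g_0},C_{\bar g_0^2}$, each of these classes is individually $\gamma$-fixed; in particular $\gamma(C_1)=C_1$, which is exactly what you need. Playing the explicit values of $\chi_1$ against $\gamma$-invariance and the $\gamma$-stability of the sets $\Sigma_i^F$ is also how the paper narrows the action on $\Sigma_1^F$ down to two possibilities and then identifies $\psi_1,\psi_2$ at the end; your route via ``$\gamma$-invariance of the sheaf implies $\lambda\circ\bar\gamma=\lambda$'' needs a further equivariance statement about the parametrisation of cuspidal sheaves by characters of $A_\bG(g_0)$ which you do not supply. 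Your step of ruling out $\bar\gamma=\mathrm{id}$ via \cite[Cor.~20.4]{L2d} does match the paper's (if all nine classes were $\gamma$-fixed, all six cuspidal sheaves would be $\gamma$-invariant); the appeal to $\Sigma^{-1}=\Sigma$ and ``swapping $\psi_i\leftrightarrow\overline\psi_i$'' is a red herring and not used in the paper.
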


\begin{proof} Since $\gamma(\Sigma)=\Sigma$, and since $\gamma$ and $F$ 
commute with each other, the automorphism~$\gamma$ will permute the 
$\bG^F$-conjugacy classes inside $\Sigma^F$. Let $A_1,A_2$ be the two 
unipotent cuspidal character sheaves on $\bG$ and $\chi_1,\chi_2\in
\CF(\bG^F\mid \Sigma)$ the corresponding characteristic functions, 
normalised as in \ref{cuspcs1}(b). By the main result of Shoji \cite{S2}, 
\cite{S3}, the functions $\chi_1,\chi_2$ are linear combinations of 
unipotent characters. The form of these linear combinations shows that 
$\chi_2=\overline{\chi}_1$ (complex conjugate); see also \cite[5.4, 
5.6]{pamq}. Thus, $A_1,A_2$ correspond to complex conjugate linear 
characters of $A_\bG(g_0)$ in Table~\ref{chrfe6}, which must be either 
$\{\psi_1,\psi_2\}$ or $\{\psi_3,\psi_4\}$ or $\{\psi_5,\psi_6\}$. But 
that table also shows that, replacing an initially chosen element
$a\in A_\bG(g_0) \setminus C_3$ by $\bar{g}_0^ia$ for a suitable $i\in 
\{0,1,2\}$ if necessary, we may assume without loss of generality that
the two linear characters are $\psi_1,\psi_2$; note that these are trivial 
on~$C_3'$. The values of $\chi_1,\chi_2$ are now given as follows.
\begin{center} $\renewcommand{\arraystretch}{1.1} \begin{array}{lccccccccc} 
\hline & C_1 & C_{\bar{g}_0} & C_{\bar{g}_0^2}  & C_a & C_{\bar{g}_0a}&
C_{\bar{g}_0^2a}&C_{a^2}&C_{\bar{g}_0a^2}&C_{\bar{g}_0^2a^2}\\\hline 
\chi_1 & q^3 & q^3 \theta & q^3 \theta^2 & q^3 & q^3 \theta & 
q^3 \theta^2 & q^3 & q^3 \theta & q^3 \theta^2\\ 
\chi_2=\overline{\chi}_1 & q^3 & q^3 \theta^2 & q^3 \theta & q^3 & 
q^3 \theta^2 & q^3 \theta & q^3 & q^3 \theta^2 & q^3 \theta\\ \hline
\end{array}$
\end{center}
(Note again that $\dim \bG-\dim \Sigma=6$). Now it is known that all 
unipotent characters of $\bG^F$ are invariant under $\gamma\in \mbox{Aut}
(\bG^F)$; see \cite[Theorem~4.5.11]{gema}. Hence, we must also have 
\[ \chi_1(g)=\chi_1(\gamma(g))\qquad \mbox{for all $g\in \bG^F$}.\] 
Now $\gamma$ leaves $\pi(\tbG_{\text{der}}^\tF)$ invariant. Hence, since 
$\Sigma_0^F=C_1\cup C_{\bar{g}_0}\cup C_{\bar{g}_0^2}\subseteq \pi
(\tbG_{\text{der}}^\tF)$ (see \ref{lem53}(c)), we have $\gamma(\Sigma_0^F)
=\Sigma_0^F$. Looking at the values of $\chi_1$, and taking into account 
the invariance of $\chi_1$ under~$\gamma$, we conclude that $\gamma
(C_{\bar{g}_0^i})=C_{\bar{g}_0^i}$ for $i=0,1,2$. By a similar argument, 
$\gamma(\Sigma_1^F)$ must be equal to $\Sigma_1^F$ or $\Sigma_2^F$; 
consequently (again by looking at the values of $\chi_1$), we have 
\[\gamma(C_{\bar{g}_0^ia})=C_{\bar{g}_0^ia} \quad (i=0,1,2)\qquad\mbox{or} 
\qquad \gamma(C_{\bar{g}_0^ia})=C_{\bar{g}_0^ia^2} \quad (i=0,1,2).\]
Assume, if possible, that the first possibility is true, that is,
$\gamma(C_{\bar{g}_0^ia})=C_{\bar{g}_0^ia}$ for $i=0,1,2$.  Then all
$\bG^F$-conjugacy classes contained in $\Sigma^F$ are fixed by $\gamma$,
and so the characteristic functions of all six cuspidal character sheaves
will be invariant under~$\gamma$. This would imply that, if $A$ is any
of those six cuspidal character sheaves, then $\gamma^*A\cong A$, 
contradiction to \cite[Cor.~20.4]{L2d} (and its proof). So the second
possibility must be true.

Thus, indeed, $a\in A_\bG(g_0)$ can be chosen such that the action of 
$\gamma$ on the $\bG^F$-conjugacy classes contained in $\Sigma^F$ is as 
stated. (If $F=F_q$ is split and $3\mid q-1$, then this also follows 
immediately from the proof of \cite[Cor.~20.4]{L2d}.) But then the 
statement concerning the linear characters corresponding to the two 
unipotent cuspidal character sheaves follows for any such choice of~$a$, 
again by inspection of the values of $\chi_1$ and $\chi_2$. 
\end{proof}

\begin{abs} \label{e6m3} Let $\Sigma$ and $g_0\in \Sigma^F$ be as in
\ref{lem52}. We can now address the problem of finding a suitable basis 
of the subspace $\CF(\bG^F\mid \Sigma)$, satisfying the requirements 
$(\Sigma)_1$, $(\Sigma)_2$ in Section~\ref{subsp}. Let $\chi_0$ be the
indicator function of $\Sigma^F$; this is a uniform function and we can 
explicitly write it as a linear combination of $\Irr(\bG^F)$ (see~\ref{unif} 
and also Remark~\ref{zent0} below.) Let $\chi_1,\chi_2$ be the characteristic 
functions of the two ($F$-invariant) unipotent cuspidal character sheaves;
explicit expressions as linear combinations of unipotent characters of $\bG^F$
can be found in \cite[Prop.~5.5]{pamq} (split type) and \cite[Prop.~5.7]{pamq} 
(twisted type).

(a) Assume first that $F$ does not act trivially on $A_\bG(g_0)$; then
$\dim \CF(\bG^F\mid \Sigma)=|A_\bG(g_0)^F|=3$. Using
Proposition~\ref{e6m1}, a basis of $\CF(\bG^F\mid \Sigma)$ is 
given in Table~\ref{chrfe6b}.
%\[\renewcommand{\arraystretch}{1.1}  \begin{array}{cccc} \hline  & 
%C_1 & C_{\bar{g}_0} & C_{\bar{g}_0^2} \\ \hline \chi_0 & 1 & 1 & 1\\ 
%\chi_1 & q^3 & q^3\theta & q^3 \theta^2\\ \chi_2 & q^3 & q^3 \theta^2 
%& q^3 \theta\\ \hline \end{array}\]

(b) Now assume that $F$ acts trivially on $A_\bG(g_0)$; then $\dim 
\CF(\bG^F\mid \Sigma)=|A_\bG(g_0)^F|=9$. Recall from \ref{lem53} 
that $s_0,s_1,s_2$ are representatives of the $\bG^F$-conjugacy classes 
contained in~$\bC_0^F$, and that these elements also form a set of 
representatives for the $3$ cosets of $\pi(\tbG_{\text{der}}^\tF)$ in 
$\bG^F$, where $s_0$ represents the trivial coset. Hence, for $i=0,1,2$, 
we have a unique linear character $\lambda_i \colon \bG^F\rightarrow 
\C^\times$ such that $\pi(\tbG_{\text{der}}^\tF)\subseteq \ker(\lambda_i)$ 
and 
\[ \lambda_i(s_0)=1, \qquad \lambda_i(s_1)=\theta^i, \qquad \lambda_i(s_2)=
\theta^{-i}.\] 
We apply the construction in Proposition~\ref{scad1} to $\chi_0,\chi_1,
\chi_2$. This yields class functions $\chi_0^{(s_i)}$, $\chi_1^{(s_i)}$, 
$\chi_2^{(s_i)}$ for $i=0,1,2$ which form a basis of $\CF(\bG^F\mid 
\Sigma)$; see Table~\ref{chrfe6b}. As already 
mentioned, explicit expressions for $\chi_1,\chi_2$ as linear combinations 
of unipotent characters of~$\bG^F$ are known. This yields explicit 
expressions of $\lambda_i\cdot \chi_1$ and $\lambda_i \cdot \chi_2$ as 
linear combinations of $\Irr(\bG^F)$. (See Remark~\ref{zent3} below for 
more details.) Consequently, we also obtain expressions of $\chi_1^{(s_i)}$ 
and $\chi_2^{(s_i)}$ as linear combinations of $\Irr(\bG^F)$, for $i=0,1,2$.
(We note that, for $i=1,2$, the functions $\lambda_i\cdot \chi_1$ and 
$\lambda_i \cdot \chi_2$ can also be identified with characteristic functions
of the four non-unipotent cuspidal character sheaves on $\bG$.)
\end{abs}

%By inspection
%of Table~\ref{chrfe6b}, we see that 
%\[ \chi_3=\lambda_1\cdot\chi_1,\qquad \chi_4=\overline{\chi}_3=
%\lambda_2\cdot\chi_2, \qquad \chi_5=\lambda_2\cdot\chi_1,\qquad\chi_6=
%\overline{\chi}_5=\lambda_1\cdot\chi_2.\]
%Hence, from the known decompositions of $\chi_1,\chi_2$ as linear
%combinations of unipotent characters of $\bG^F$, we obtain decompositions 
%of $\chi_3,\chi_4, \chi_5,\chi_6$ as linear combinations of $\Irr(\bG^F)$.
%This will be made somewhat more precise in Remark~\ref{zent3} below.

\begin{table}[htbp] \caption{Bases of $\CF(\bG^F\mid \Sigma)$} 
\label{chrfe6b}
\begin{center}
$\renewcommand{\arraystretch}{1.1} \renewcommand{\arraycolsep}{3pt}
\begin{array}{cccc} \multicolumn{4}{l}{\mbox{\small $|A_\bG(g_0)^F|=3:$}} \\ 
\hline & C_1 & C_{\bar{g}_0} & C_{\bar{g}_0^2} \\ \hline \chi_0 & 1 & 1 & 1\\ 
\chi_1 & q^3 & q^3\theta & q^3 \theta^2\\ \chi_2 & q^3 & q^3 \theta^2 
& q^3 \theta\\ \hline \\\\\\\\\\\\\end{array}\qquad \quad
\begin{array}{lccccccccc} \multicolumn{10}{l}{\mbox{\small $|A_\bG(g_0)^F|
=9:$}}\\ \hline & C_1 & C_{\bar{g}_0} & C_{\bar{g}_0^2}  & C_a &
C_{\bar{g}_0a}& C_{\bar{g}_0^2a}&C_{a^2}&C_{\bar{g}_0a^2}&C_{\bar{g}_0^2a^2}
\\\hline \chi_0^{(s_0)} & 1 & 1 & 1 & 0 & 0 & 0 & 0 & 0 & 0\\
\chi_1^{(s_0)} & q^3 & q^3 \theta & q^3 \theta^2 & 0 & 0 & 0 & 0 & 0 & 0 \\
\chi_2^{(s_0)} & q^3 & q^3 \theta^2 & q^3 \theta & 0 & 0 & 0 & 0 & 0 & 0 \\
\chi_0^{(s_1)} & 0 & 0 & 0 & 1 & 1 & 1 & 0 & 0 & 0\\
\chi_1^{(s_1)} & 0 & 0 & 0 & q^3 & q^3 \theta & q^3 \theta^2 & 0 & 0 & 0 \\ 
\chi_2^{(s_1)} & 0 & 0 & 0 & q^3 & q^3 \theta^2 & q^3 \theta & 0 & 0 & 0 \\ 
\chi_0^{(s_2)} & 0 & 0 & 0 & 0 & 0 & 0 & 1 & 1 & 1\\
\chi_1^{(s_2)} & 0 & 0 & 0 & 0 & 0 & 0 & q^3 & q^3 \theta & q^3 \theta^2 \\
\chi_2^{(s_2)} & 0 & 0 & 0 & 0 & 0 & 0 & q^3 & q^3 \theta^2 & q^3 \theta \\
\hline \end{array}$
\end{center}
\end{table}

%\begin{table}[htbp] \caption{A basis of $\CF(\bG^F\mid \Sigma)$} 
%\label{chrfe6b}
%\begin{center}
%$\renewcommand{\arraystretch}{1.1} \begin{array}{lccccccccc} 
%\hline & C_1 & C_{\bar{g}_0} & C_{\bar{g}_0^2}  & C_a & C_{\bar{g}_0a}&
%C_{\bar{g}_0^2a}&C_{a^2}&C_{\bar{g}_0a^2}&C_{\bar{g}_0^2a^2}\\\hline 
%\chi_0^0 & 1 & 1 & 1 & 0 & 0 & 0 & 0 & 0 & 0\\
%\chi_0^1 & 0 & 0 & 0 & 1 & 1 & 1 & 0 & 0 & 0\\
%\chi_0^2 & 0 & 0 & 0 & 0 & 0 & 0 & 1 & 1 & 1\\
%\chi_1 & q^3 & q^3 \theta & q^3 \theta^2 & q^3 & q^3 \theta & 
%q^3 \theta^2 & q^3 & q^3 \theta & q^3 \theta^2\\ 
%\chi_2=\overline{\chi}_1 & q^3 & q^3 \theta^2 & q^3 \theta & q^3 & 
%q^3 \theta^2 & q^3 \theta & q^3 & q^3 \theta^2 & q^3 \theta\\ 
%\chi_3 & q^3 & q^3 \theta & q^3\theta^2 & q^3\theta & q^3 \theta^2 & 
%q^3 & q^3\theta^2 & q^3 & q^3 \theta\\ 
%\chi_4=\overline{\chi}_3 & q^3 & q^3 \theta^2 & q^3\theta & 
%q^3\theta^2 & q^3 \theta & q^3 & q^3\theta & q^3 & q^3 \theta^2\\ 
%\chi_5 & q^3 & q^3 \theta & q^3 \theta^2 & q^3 \theta^2& q^3 & 
%q^3 \theta & q^3\theta & q^3 \theta^2 & q^3 \\ 
%\chi_6=\overline{\chi}_5 & q^3 & q^3 \theta^2 & q^3 \theta & q^3 \theta& 
%q^3 & q^3 \theta^2 & q^3\theta^2 & q^3 \theta & q^3 \\ 
%\hline \end{array}$
%\end{center}
%\end{table}

As discussed in Section~\ref{subsp}, the above information about a 
basis of $\CF(\bG^F\mid \Sigma)$ allows us to determine the values
of all irreducible characters of $\bG^F$ on elements in $\Sigma^F$.

%%%%%%%%%%%%%%%%%%%%%%%%%%%%%%%%%%%%%%%%%%%%%%%%%%%%%%%%%%%%%%%%%%%%%%%%%%%
\section{Groups of adjoint type $E_6$ in characteristic~$2$} \label{finale6}

Let $\bG$ be simple, adjoint of type $E_6$ and $F\colon \bG\rightarrow \bG$ 
be a Frobenius map. We shall also fix a group $\bG^*$ dual to~$\bG$, with 
dual Frobenius map $F^* \colon \bG^* \rightarrow \bG^*$. (See 
\cite[\S 1.5]{gema} for general background.) Since $\bG$ is simple, adjoint 
of type $E_6$, the group $\bG^*$ is simple, simply connected of type~$E_6$. 

By Lusztig's general theory, $\Irr(\bG^F)$ is the disjoint union of 
``series'' $\cE(\bG^F,s)$, where $s$ runs over a set of representatives 
of the conjugacy classes of semisimple elements of~${\bG^*}^{F^*}$ (see 
\cite[\S 2.6]{gema}). As a first, and crucial step, we shall now consider 
the unipotent characters of $\bG^F$, which are the characters in the
series $\cE(\bG^F,1)$. A character $\rho\in \Irr(\bG^F)$ is unipotent 
if and only if $\langle R_{\bT,1}^\bG,\rho\rangle_{\bG^F}\neq 0$ for some
$F$-stable maximal torus $\bT\subseteq \bG$. There are $30$ unipotent 
characters, both for $\bG^F=E_6(q)$ and for $\bG^F={^2\!E}_6(q)$. Tables 
with their degrees (and further information) can be found in 
\cite[p.~480/481]{Ca2} and \cite[p.~363]{LuB}. 

In order to explain how the values of the unipotent characters can be 
determined, we shall assume that the following information is available.
\begin{itemize}
\item[{\bf (A1)}] A parametrisation of the conjugacy classes of $\bG^F$; 
\item[{\bf (A2)}] the values of $R_{\bT,1}^\bG$ for all $F$-stable
maximal tori $\bT\subseteq \bG$ (up to $\bG^F$-conjugacy);
\item[{\bf (A3)}] for every regular $\bL\subsetneqq \bG$, the values 
$\psi(u)$ for $\psi \in \cE(\bL^F,1)$ and $u\in \bL^F$ unipotent.
\end{itemize}

\begin{rem} \label{zent0} Assume that $f\in \CF(\bG^F)$ is a uniform
function. Using the formula in~\ref{unif}, we can express $f$ as a linear
combination of the generalised characters $R_{\bT,\theta}^\bG$ where
the coefficients involve the inner products $\langle f,R_{\bT,\theta}^\bG
\rangle_{\bT^F}$. Now the decomposition of each $R_{\bT,\theta}^\bG$ as
a linear combination of $\Irr(\bG^F)$ is explicitly known by the 
Main Theorem~4.23 of Lusztig \cite{LuB}. In order to compute $\langle 
f,R_{\bT,\theta}^\bG\rangle_{\bG^F}$, we need to know the values of
$R_{\bT,\theta}^\bG$. Hence, once {\bf (A1)}, {\bf (A2)} are available, 
we can explicitly write $f$ as a linear combination of $\Irr(\bG^F)$.
\end{rem}

\begin{abs} \label{ab1} {\bf About (A1)}. The conjugacy classes of
semisimple elements of any finite group of Lie type can be classified by 
a standard procedure; see Deriziotis \cite{Dere}, Fleischmann--Janiszczak 
\cite{FlJa} and further references there. For $\bG$ of type $E_6$, this 
was first done by Mizuno \cite{Miz}; explicit tables for this case (and 
also other groups of exceptional type) are made available by L\"ubeck 
\cite{Lue07}. Now let $s\in \bG^F$ be semisimple. Then one needs to
classify the conjugacy classes of unipotent elements in $C_\bG(s)^F$, 
and this is somewhat more tricky. In our case, it is convenient to pass
to the group $\tbG$ and consider the homomorphism $\pi \colon \tbG
\rightarrow \bG$ with $\pi(\tbG^\tF)=\bG^F$ (as in Section~\ref{scad}). In 
\cite[\S 2]{hlm} and \cite[\S 3]{hl} it is reported that parametrisations
of the conjugacy classes of $\tbG^\tF$ have been computed using (the older
version of) {\sf CHEVIE} \cite{chevie}. Information about the unipotent 
classes in the centralisers of semisimple elements of~$\tbG$ can now also be
obtained using Michel's more recent version of {\sf CHEVIE} \cite{gap3jm}, 
as already mentioned (and used) in~\ref{lem51}; from this one can derive the 
conjugacy classes of~$\bG^F$. Formulae for the total number of conjugacy
classes of~$\bG^F$ can be found in \cite[\S 4]{FlJa} (they depend on
$q \bmod 6$).
\end{abs}

\begin{abs} \label{ab2} {\bf About (A2)}. There is a character formula 
which reduces the computation of the values of $R_{\bT,\theta}^\bG$ (for 
any $\theta \in \Irr(\bT^F)$) to the computation of the Green functions 
of $\bG^F$, and the Green functions of subgroups of the form $C_\bG^\circ 
(s)^F$ where $s\in \bG^F$ is semisimple; see \cite[Theorem~2.2.16]{gema}. 
The further issues in the evaluation of $R_{\bT,\theta}^\bG$ are discussed 
in \cite[\S 3]{pamq}. If $\theta=1$ is the trivial character, then the 
whole procedure simplifies drastically.

In the case of primary interest to us, that is, $p=2$, the Green functions 
for $\bG^F$ itself have been computed by Malle \cite{Mal1}. Nowadays, one 
can re-produce these results somewhat more systematically by the methods 
in \cite{ekay} (especially for $p=2$); these methods also apply to all
subgroups $C_\bG^\circ(s)^F$ as above. The remarks in \cite[\S 2]{hlm} and 
\cite[\S 3]{hl} indicate that Frank L\"ubeck has already computed explicit 
tables with the values $R_{\bT,1}^\bG(g)$ for all $F$-stable maximal
tori $\bT\subseteq \bG$ and all $g\in \bG^F$. 
\end{abs}

\begin{abs} \label{ab13} {\bf About (A3)}. First note that, since $\bG$ has 
a connected center, the same is true for~$\bL$ as well. Furthermore, the 
possibilities for the root system of $\bL$ are as follows. Either this is 
a direct sum of root systems of type $A_m$ (for various $m$), or it is a 
root system of type~$D_4$ or of type~$D_5$. In the first case, we can
use Example~\ref{unif1} and Remark~\ref{zent0}. For the second case,
explicit tables with the values of the unipotent characters (both for 
$D_4(q)$ and for ${^2\!D}_4(q)$) are contained in the library of (the 
{\sf MAPLE} part of) {\sf CHEVIE} \cite{chevie}. (Proposition~\ref{d4split}
provides a proof that those tables are correct.) Finally, assume that we
are in the third case, that is, $\bL$ of type $D_5$. Then the information 
about the values of the unipotent characters of~$\bL^F$ on unipotent
elements does not yet seem to be available; the required arguments
are contained in an appendix due to J. Hetz (see Section~\ref{appdx}).
\end{abs}

%\begin{abs} \label{ab1} {\bf About (A3)}. This information is not yet 
%fully available (in the form of explicit tables) for all proper regular 
%subgroups of $\bG$. But Hetz \cite{Het3}, \cite{Het4} has solved this 
%problem for all groups of exceptional type $E_6$, $E_7$ and $E_8$. Hence, 
%similar (and much simpler) methods will also work for the regular subgroups
%involved in our group $\bG$ of type $E_6$. This is work to be done, but
%there is a clear road map on how to do it. 
%\end{abs}

Assume from now on that $p=2$ and that the information in {\bf (A1)}, 
{\bf (A2)}, {\bf (A3)} indeed is available. In order to proceed, we need 
to know the possible types of the centralisers of semisimple elements 
in~$\bG$.

\begin{abs} \label{ab0} Recall that we assume that $p=2$. Let $s\in \bG^F$ 
be semisimple and $\bH_s:=C_\bG^\circ(s)$. By inspection of the list in 
Theorem~3.8 of Mizuno \cite{Miz} (see also L\"ubeck's tables \cite{Lue07}), 
we see that $\bH_s$ is either a maximal torus, or $\bH_s$ is a regular 
subgroup with a root system of type 
\begin{gather*}
E_6,\;D_5,\;A_5,\;A_4{+}A_1,\;A_2{+}A_2{+}A_1,\;D_4,\;A_4,\;
A_3{+}A_1,\; A_2{+}A_2,\; \\ A_2{+}A_1{+}A_1,\; A_3,
\;A_2{+}A_1,\;A_1{+}A_1{+}A_1,\;A_2,\;A_1{+}A_1,\;A_1,
\end{gather*}
or $\bH_s$ has a root system of type $A_2{+} A_2{+} A_2$. (Note
that, in Mizuno's list, there are also subgroups with a root system of type
$A_5{+}A_1$, $A_3{+}A_1 {+}A_1,A_1{+}A_1{+}A_1{+}A_1$, but these do not
occur as connected centralisers of semisimple elements when $p=2$.) Thus,
we have the following special situation for our group $\bG$ (with $p=2$): 
Either $\bH_s=\bG$ or $\bH_s$ has a root system of type $A_2{+} A_2{+}A_2$, 
or $\bH_s \subsetneqq \bG$ is a regular subgroup as in~\ref{exp01}.
\end{abs}

Given the above information, we can now follow the strategy in 
\cite[\S 3]{unif4} to compute the values of the unipotent characters of
$\bG^F$ on all elements of $\bG^F$. We give just some more specific 
comments. Let $g\in \bG^F$ and write $g=su=us$ where $s\in \bG^F$ is 
semisimple and $u\in \bG^F$ is unipotent. As above let $\bH_s:=
C_\bG^\circ(s)$; then $u \in \bH_s^F$.

\begin{rem} \label{zent1} Assume that $\bH_s=\bG$, that is, $s=1$ and
$g=u$ is unipotent. Then the values of the unipotent characters of 
$\bG^F$ on~$g$ have been determined by Hetz \cite[Remark~4.1.27]{Het3}
(split type) and \cite[\S 8]{Het4} (twisted type).
\end{rem}

\begin{rem} \label{zent2} Assume that $\bH_s \subsetneqq \bG$ is a 
regular subgroup. Then we can use the information in {\bf (A3)} and
the formula in~\ref{exp01} to compute the values of the
unipotent characters of $\bG^F$ on~$g$, exactly as explained in 
\cite[3.4]{unif4}.
\end{rem}

\begin{rem} \label{zent3} Assume that $\bH_s$ has a root system of
type $A_2{+} A_2{+} A_2$. Thus, we are in the setting of
Section~\ref{e6p2}. Let $C$ be the $\bG^F$-conjugacy class of~$g$. For a
given unipotent character $\rho \in \cE(\bG^F,1)$, we then use the formula 
$\rho(g)=|C_\bG(g)^F|\langle \rho,\varepsilon_C\rangle_{\bG^F}$. 
If $u$ is not regular unipotent in $\bH_s$, then $\varepsilon_C$
is uniform (see \ref{lem51}); so we can use Remark~\ref{zent0} to evaluate
$\langle \rho,\varepsilon_C\rangle_{\bG^F}$. Now assume that $u$ is 
regular unipotent in~$\bH_s$; thus, $g=su \in \Sigma^F$ with $\Sigma$ as 
in~\ref{lem52}. Here, the indicator function $\varepsilon_C$ is not uniform,
but we can explicitly express $\varepsilon_C$ as a linear combination of the 
functions in Table~\ref{chrfe6b}. Hence, it remains to determine the inner
products of $\rho$ with those functions. Firstly, $\chi_0$ and 
$\chi_0^{(s_i)}$ ($i=0,1,2$) are uniform, so we can use Remark~\ref{zent0}.
Next, $\chi_1,\chi_2$ are (known) linear combinations of the unipotent 
characters of $\bG^F$; hence, we know the inner products of $\rho$ with 
$\chi_1$ and $\chi_2$. It remains to determine the inner products of 
$\rho$ with $\chi_1^{(s_i)}$ and $\chi_2^{(s_i)}$ for $i=0,1,2$. For this 
purpose, it is enough to consider the products $\lambda_i \cdot \chi_1$ 
and $\lambda_i \cdot \chi_2$, where $\lambda_i\colon \bG^F \rightarrow
\C^\times$ is a linear character such that $\pi(\tbG_{\text{der}}^\tF)
\subseteq \ker(\lambda_i)$, as in \ref{e6m3}(b). We claim that
\[ \langle \rho,\lambda_i \cdot \chi_1\rangle_{\bG^F}=\langle \rho,
\lambda_i \cdot \chi_2\rangle_{\bG^F}=0 \qquad \mbox{for $i=1,2$}.\]
This is seen as follows. By \cite[Prop.~2.5.20]{gema}, each linear character
$\lambda_i$ as above corresponds to an element $z_i\in Z(\bG^*)^{F^*}$; 
furthermore, multiplication with $\lambda_i$ defines a bijection 
$\cE(\bG^F,1)\stackrel{\sim}{\rightarrow} \cE(\bG^F,z_i)$. (This follows 
from \cite[Prop.~2.5.21]{gema}.) Now assume that $i\in \{1,2\}$. Then
$\lambda_i$ is non-trivial and, hence, $z_i\neq 1$. Consequently, $\lambda_i
\cdot \chi_1$ and $\lambda_i\cdot \chi_2$ are linear combinations of 
characters in the series $\cE(\bG^F,z_i)\neq \cE(\bG^F,1)$, and so the 
inner product of our unipotent character $\rho$ with $\lambda_i\cdot 
\chi_1$ and with $\lambda_i \cdot \chi_2$ must be~$0$. 
\end{rem}

\begin{rem} \label{zent4} In order to determine the values of the 
remaining, non-unipotent characters of $\bG^F$, one can proceed similarly 
as in \cite[\S 4]{unif4}, assuming that the following information is 
available.
\begin{itemize}
\item[{\bf (A5)}] A parametrisation of $\fX(\bG,F)$, modulo the natural 
action of $\bG^F$ by conjugation;
\item[{\bf (A6)}] parametrisations of the semisimple conjugacy classes 
of ${\bG^*}^{F^*}$;
\item[{\bf (A7)}] tables with the values $R_{\bT,\theta}^\bG(g)$ for 
all $(\bT,\theta) \in \fX(\bG,F)$ and all $g\in \bG^F$; 
\end{itemize}
\end{rem}
%\item[{\bf (A8)}] for every regular $\bL\subsetneqq \bG$, the values 
%$\psi(u)$ for $\psi \in \Irr(\bL^F)$ and $u\in \bL^F$ unipotent.
In any case, as already mentioned in the introduction, a lot of 
further~---~mainly computational~---~work is required in order to produce 
actual tables of character values for~$\bG^F$ but, at least, we hope to 
have given a clear strategy of how to obtain those tables.

%%%%%%%%%%%%%%%%%%%%%%%%%%%%%%%%%%%%%%%%%%%%%%%%%%%%%%%%%%%%%%%%%%%%%%%%%%%
\section{Appendix. Groups of type $D_5$ in characteristic~$2$ (by J.
Hetz)} \label{appdx}

Let $p=2$ and $\bG$ be simple of type $D_5$, so that we either have $\bG^F=D_5(q)$ 
(split type) or $\bG^F={^2\!D}_5(q)$ (twisted type). The aim of this 
appendix is to determine the values of the unipotent characters on the 
unipotent elements of $\bG^F$. (This information was needed in
Section~\ref{finale6}.) First of all, by \cite[Table~8.6a (p.~127)]{LiSe}, 
there are $16$ unipotent classes in~$\bG$ and $20$ unipotent classes in
$\bG^F$ (both for the split type and the twisted type). Hence, we have
$\dim\mbox{CF}_{\text{uni}}(\bG^F)=20$. By Proposition~\ref{clp2b}, there 
are also $20$ unipotent characters of $\bG^F$. So the issue is to determine
a certain $(20 \times 20)$-array of character values.
There are $18$ generalised characters $R_{\bT,1}^\bG$ (again both for the split type and the twisted type), and we obtain their decompositions into unipotent characters as explained in \ref{d4intro}. The values of their restrictions to the unipotent elements (that is, the Green functions) are known and can be computed using the methods in \cite{ekay}. In view of Proposition~\ref{clp2b}, it remains to define suitable linear combinations of unipotent characters to obtain two further class functions $f_1,f_2\in\CF(\bG^F)$ which are orthogonal to all $R_{\bT,1}^\bG$, and then compute the values of $f_1, f_2$ on unipotent elements.

\begin{abs} \label{D5a} (a) Assume that $\bG^F=D_5(q)$ (split type). The 
unipotent characters of $\bG^F$ are parametrised by symbols of rank $5$ 
and defect $d\equiv 0 \bmod 4$ (cf.\ \ref{d4intro}; there is no symbol of the form $(S,S)$ in the present situation). We define two class functions $f_1,f_2\in \mbox{CF}(\bG^F)$ by 
\begin{align*}
f_1&:=\tfrac12(\rho_{(02,14)}-\rho_{(12,04)}-\rho_{(01,24)}+\rho_{(0124,-)}), \\
f_2&:=\tfrac12(\rho_{(013,124)}-\rho_{(123,014)}-\rho_{(012,134)}+\rho_{(01234,1)}).
\end{align*}
As in \ref{d4intro}, one sees that $\langle f_i,R_{\bT,1}^\bG
\rangle_{\bG^F}=0$ for $i=1,2$ and for all $\bT$; also note that
$\langle f_1,f_2\rangle_{\bG^F}=0$. Hence, every unipotent 
character of $\bG^F$ is uniquely a linear combination of $f_1$, $f_2$
and the various $R_{\bT,1}^\bG$.

(b) Assume that $\bG^F={^2\!D}_5(q)$ (twisted type). The unipotent 
characters of $\bG^F$ are parametrised by symbols of rank $5$ and 
defect $d\equiv 2 \bmod 4$. Here, we define two class functions $f_1,f_2
\in \mbox{CF}(\bG^F)$ by 
\begin{align*}
f_1&:=\tfrac12(\rho_{(014,2)}-\rho_{(012,4)}-\rho_{(024,1)}+\rho_{(124,0)}), \\
f_2&:=\tfrac12(\rho_{(0124,13)}-\rho_{(0123,14)}-\rho_{(0134,12)}+\rho_{(1234,01)}).
\end{align*}
As before, we check that $\langle f_1,f_2\rangle_{\bG^F}=0$ and $\langle f_i,R_{\bT,1}^\bG
\rangle_{\bG^F}=0$ for $i=1,2$ and for all $\bT$. So again, every unipotent 
character of $\bG^F$ is uniquely a linear combination of $f_1$, $f_2$
and the various $R_{\bT,1}^\bG$.
\end{abs}

The functions $f_i$ ($i=1,2$) are examples of so-called (unipotent) ``almost characters'' as defined in \cite[4.24]{LuB}, that is, they arise from the unipotent characters by applying Lusztig's non-abelian Fourier transform which we referred to in \ref{lfrom}. Hence, by \cite[Theorem~3.2]{S2}, the $f_i$ are in fact characteristic functions of character sheaves. (In the split case where $\bG^F=D_5(q)$, it would be sufficient to refer to \cite[Cor.~14.14]{L2c} for our purposes; see \ref{Ree} below.)
The values of the $f_i$ on unipotent elements of~$\bG^F$ can thus be evaluated up to multiplication with a root of unity using the generalised Springer
correspondence and the algorithm in \cite[\S 24]{L2d} which we will now briefly describe, using a notation similar to the one in \cite[1.1--1.3]{SGenGreen}. (Both the generalised Springer correspondence and Lusztig's algorithm can be accessed through Michel's {\sf CHEVIE} \cite{gap3jm}.)

\begin{abs}\label{LuAlg}
Let $\cN_\bG$ be the set of all pairs $(\cO,\varsigma)$ where $\cO\subseteq\bG$ is a unipotent class and, for a fixed representative $u_\cO\in\cO$, $\varsigma$ is an irreducible character of the group $A_\bG(u_\cO)$. Since any unipotent class of $\bG$ is $F$-stable, we may (and will) assume that $u_\cO\in\cO^F$. Moreover, the group $A_\bG(u_\cO)$ is either trivial or isomorphic to $\Z/2\Z$ for any unipotent class $\cO\subseteq\bG$; we can therefore denote the irreducible characters of $A_\bG(u_\cO)=\langle\overline u_\cO\rangle$ by their values at $\overline u_\cO$. Regarding the unipotent classes of $\bG$ themselves, we denote them by the partition of $10$ giving the sizes of the Jordan blocks.

Each element $(\cO,\varsigma)\in\cN_\bG$ parametrises an $F$-invariant character sheaf on $\bG$, to which Lusztig \cite[24.2]{L2d} associates a function $X_{(\cO,\varsigma)}\in\CF_{\mathrm{uni}}(\bG^F)$. From the generalised Springer correspondence (defined in \cite{LuIC}), it follows that the character sheaf corresponding to $f_1$ is parametrised by $(82,-1)$ and the character sheaf corresponding to $f_2$ by $(6211,-1)$. (We have $A_\bG(u)\cong \Z/2\Z$ for $u$ an element of any of the classes labelled by $82$ and $6211$.) In general, the functions $X_{(\cO,\varsigma)}$ are only defined up to multiplication with a root of unity. As for the pairs $(82,-1)$ and $(6211,-1)$, we can specify the associated functions by setting
\begin{align*}
X_{(82,-1)}(u)&:=q^{-2}f_1(u), \\
X_{(6211,-1)}(u)&:=q^{-4}f_2(u),
\end{align*}
for $u\in\bG^F$ unipotent. Having fixed such normalisations of these ``$X$-functions'', Lusztig's algorithm expresses them as explicit linear combinations of ``$Y$-functions'', which are non-zero only at elements of a single unipotent class of $\bG$ and which are easy to compute up to multiplication with a root of unity: For any $(\cO,\varsigma)\in\cN_\bG$, there is a function $Y_{(\cO,\varsigma)}\in\CF_{\mathrm{uni}}(\bG^F)$ given by $Y_{(\cO,\varsigma)}=\gamma_{(\cO,\varsigma)}Y_{(\cO,\varsigma)}^0$ for a root of unity $\gamma_{(\cO,\varsigma)}\in\C$, where
\[Y_{(\cO,\varsigma)}^0(u):=\begin{cases}
\hfil \varsigma(a) &\text{if $u$ is $\bG^F$-conjugate to $(u_{\cO})_a$ with $a\in A_\bG(u_{\cO})$},  \\
\hfil0 &\text{if }u\notin\cO^F.
\end{cases}\] 
(Note that $Y_{(\cO,\varsigma)}^0$ and thus $\gamma_{(\cO,\varsigma)}$ depend upon the choice of $u_\cO\in\cO^F$.) Regarding the functions $X_{(82,-1)}$ and $X_{(6211,-1)}$ defined above, we then get 
\begin{align*}
f_1(u)&=q^2X_{(82,-1)}(u)=q^2Y_{(82,-1)}(u)+q^3Y_{(6211,-1)}(u), \\
f_2(u)&=q^4X_{(6211,-1)}(u)=q^4Y_{(6211,-1)}(u),
\end{align*}
for $u\in\bG^F$ unipotent. In particular, we already know that the restriction of the $f_i$ to unipotent elements can only be non-zero at the classes labelled by $82$ and $6211$. In order to compute these missing values, it remains to determine the two roots of unity $\gamma_{(82,-1)}$ and $\gamma_{(6211,-1)}$.
\end{abs}

\begin{abs}\label{Ree}
Let $\cO\subseteq\bG$ be an $F$-stable unipotent class of $\bG$ such that $\cO^F$ splits into the unipotent classes $C_1,\ldots, C_r$ of $\bG^F$, and let the further notation be as in \ref{lfrom}. Then we have the identity
\begin{equation*}
\frac{|\bB^Fw\bB^F\cap C_i|\cdot|C_i|}{|\bG^F:\bB^F|}= \sum_{\phi\in \Irr(\bW^F)} 
\phi_q(T_w)[\phi](u)\qquad (1\leqslant i\leqslant r,\; w\in\bW^F,\; u\in C_i)\tag{$*$}
\end{equation*}
where $\phi_q$ is the irreducible character of the Iwahori--Hecke algebra
$\cH$ associated with $\bW^F$ (and a certain weight function depending on $F$), and $[\phi]$ is the unipotent character of
$\bG^F$ corresponding to $\phi\in\Irr(\bW^F)$. (See, for example, 
\cite[\S 11D]{CR2}.) The values 
$\phi_q(T_w)$ are explicitly known via {\sf CHEVIE} \cite{chevie}. Furthermore, by the discussion in \ref{D5a}, each $[\phi]$ can be written as
\[[\phi]=f_{\mathrm{unif}}(\phi)+a_1(\phi)f_1+a_2(\phi)f_2,\]
where $f_{\mathrm{unif}}(\phi)\in\CF(\bG^F)$ is a uniform function (whose values on unipotent elements can thus be computed) and the coefficients $a_1(\phi), a_2(\phi)\in\C$ are explicitly known. We will evaluate ($*$) with $F$-stable elements $u$ in one of the two classes $82$ and $6211$ (and with suitably chosen $w\in\bW^F$). As already noted in \ref{LuAlg}, we have $A_\bG(u)\cong \Z/2\Z$ for any such $u$; in particular, both of the unipotent classes $82$ and $6211$ of $\bG$ split into two $\bG^F$-classes.
\end{abs}

\begin{abs} \label{82} {\bf The regular unipotent class}. Let us first consider the values of $f_1$ and $f_2$ at elements of the regular unipotent class $82$ of $\bG$. Let $u\in\bG^F$ be an element of this class. As already observed in \ref{LuAlg}, we have $f_2(u)=0$, so it remains to deal with $f_1(u)$.

(a) Assume that $\bG^F=D_5(q)$. For $\phi\in\Irr(\bW)$, we get (with the notation of \ref{LuAlg} and \ref{Ree})
\[[\phi](u)=f_{\mathrm{unif}}(\phi)(u)+a_1(\phi)q^2Y_{(82,-1)}(u).\]
We have $f_{\mathrm{unif}}(\phi)(u)=0$ unless $\phi=1_\bW$ is the trivial character of $\bW$, in which case $f_{\mathrm{unif}}(1_\bW)(u)=1$. For $w\in\bW$, the sum on the right hand side of ($*$) in \ref{Ree} then evaluates to
\[\sum_{\phi\in \Irr(\bW)} \phi_q(T_w)[\phi](u)=(1_\bW)_q(T_w)+\tfrac12q^2Y_{(82,-1)}(u)((1.31)_q(T_w)-(11.3)_q(T_w)-(.32)_q(T_w)).\]
Taking $w=w_{\mathrm c}:=s_1s_2s_3s_4s_5$ to be a Coxeter element of $\bW$ (where for $1\leqslant i\leqslant5$, $s_i$ denotes the reflection in the root $\alpha_i$ with respect to the diagram for $D_5$ printed in Section~\ref{subsp}), we have 
\[(1_\bW)_q(T_{w_{\mathrm c}})=q^5,\quad(1.31)_q(T_{w_{\mathrm c}})=-(.32)_q(T_{w_{\mathrm c}})=q^3\quad\text{and}\quad(11.3)_q(T_{w_{\mathrm c}})=0.\]
Hence, denoting by $C\subseteq\bG^F$ the conjugacy class containing $u$, the equation ($*$) in \ref{Ree} reads
\[\frac{|\bB^Fw_{\mathrm c}\bB^F\cap C|\cdot|C|}{|\bG^F:\bB^F|}=q^5(1+Y_{(82,-1)}(u))=q^5(1+\gamma_{(82,-1)}Y_{(82,-1)}^0(u)).\]
Since the left hand side is in particular a real number (and $Y_{(82,-1)}^0(u)\in\{\pm1\}$), the root of unity $\gamma_{(82,-1)}$ can only be $\pm1$. Now recall that $Y_{(82,-1)}^0$ and $\gamma_{(82,-1)}$ depend on the choice of $u_{(82)}$, which we have not yet fixed.~---~If we replace $u_{(82)}$ by a representative of the other $\bG^F$-class contained in the class $82$ of $\bG$, this changes the sign of $\gamma_{(82,-1)}$. We can therefore \emph{define} $u_{(82)}$ by the condition that $\gamma_{(82,-1)}=+1$. Note that the right hand side of the above identity then equals $2q^5$ if $u$ is $\bG^F$-conjugate to $u_{(82)}$, and it is $0$ if $u$ is not $\bG^F$-conjugate to $u_{(82)}$. Hence, the $\bG^F$-class of our chosen $u_{(82)}$ is characterised by the property that it has a non-empty intersection with $\bB^Fw_\mathrm c\bB^F$.

(b) Assume that $\bG^F={^2\!D}_5(q)$. The argument here is very similar to the one for the split case, the only difference is that the group $\bW^F$ is now of type $B_4$. Indeed, the sum $\sum_{\phi\in \Irr(\bW^F)} \phi_q(T_w)[\phi](u)$ evaluates to
\[(1_{\bW^F})_q(T_w)+\tfrac12q^2Y_{(82,-1)}(u)((2.2)_q(T_w)-(21.1)_q(T_w)-(.4)_q(T_w)+(211.)_q(T_w)).\]
Taking again $w=w_\mathrm c=s_1s_2s_3s_4s_5\in\bW^F$ (note that the automorphism of $\bW$ induced by $F$ interchanges $s_1$ and $s_2$, but we have $s_1s_2=s_2s_1$), we get $(1_{\bW^F})_q(T_{w_{\mathrm c}})=q^5$ and
\[(211.)_q(T_{w_{\mathrm c}})=-(.4)_q(T_{w_{\mathrm c}})=q^3,\quad(2.2)_q(T_{w_{\mathrm c}})=(21.1)_q(T_{w_{\mathrm c}})=0.\]
Hence, denoting by $C$ the $\bG^F$-conjugacy class of $u$, we obtain
\[\frac{|\bB^Fw_{\mathrm c}\bB^F\cap C|\cdot|C|}{|\bG^F:\bB^F|}=q^5(1+\gamma_{(82,-1)}Y_{(82,-1)}^0(u)).\]
As in (a) we see that $u_{(82)}$ can be chosen such that $\gamma_{(82,-1)}=+1$, and this determines the $\bG^F$-class of $u_{(82)}$ (inside the regular unipotent class $82$) uniquely; moreover, the $\bG^F$-class of $u_{(82)}$ is characterised by the property that it meets $\bB^Fw_\mathrm c\bB^F$.
\end{abs}

\begin{abs} \label{6211} {\bf The class 6211}. It remains to find the values of $f_1$ and $f_2$ at elements of the unipotent class of $\bG$ labelled by $6211$. So let $u\in\bG^F$ be an element of this class. Recall that $A_\bG(u)\cong\Z/2\Z$.

(a) Assume that $\bG^F=D_5(q)$. For $\phi\in\Irr(\bW)$, we have
\[[\phi](u)=f_{\mathrm{unif}}(\phi)(u)+(a_1(\phi)q^3+a_2(\phi)q^4)Y_{(6211,-1)}(u)\]
(see \ref{LuAlg}, \ref{Ree}). Now the value $f_{\mathrm{unif}}(\phi)(u)$ is non-zero for $\phi\in\{1_\bW,1.4,.41\}$. In fact, we have $[\phi](u)=f_{\mathrm{unif}}(\phi)(u)$ for any of these $\phi$ and
\[f_{\mathrm{unif}}(1_\bW)(u)=1,\quad f_{\mathrm{unif}}(1.4)(u)=q,\quad f_{\mathrm{unif}}(.41)(u)=q^2.\]
For $w\in\bW$, we again evaluate the sum on the right hand side of ($*$) in \ref{Ree}:
\begin{align*}
\sum_{\phi\in \Irr(\bW)} \phi_q(T_w)[\phi](u)&=(1_\bW)_q(T_w)+(1.4)_q(T_w)\cdot q+(.41)_q(T_w)\cdot q^2 \\
&+\tfrac12q^3Y_{(6211,-1)}(u)((1.31)_q(T_w)-(11.3)_q(T_w)-(.32)_q(T_w)) \\
&+\tfrac12q^4Y_{(6211,-1)}(u)((1.211)_q(T_w)-(111.2)_q(T_w)-(.211)_q(T_w)).
\end{align*}
Here we take $w:=s_1s_2s_3s_4\in\bW$ and get
\begin{align*}
&(1_\bW)_q(T_w)=(.41)_q(T_w)=q^4,\quad(1.4)_q(T_w)=q^4-q^3, \\
&(1.31)_q(T_w)=-q^3+q^2,\quad(11.3)_q(T_w)=-q^3,\quad(.32)_q(T_w)=-q^2,\\
&(1.211)_q(T_w)=q^2-q,\quad(111.2)_q(T_w)=-q,\quad(.221)_q(T_w)=-q^2.
\end{align*}
Hence, if $C\subseteq\bG^F$ is the conjugacy class containing $u$, the equation ($*$) in \ref{Ree} reads
\[\frac{|\bB^Fw\bB^F\cap C|\cdot|C|}{|\bG^F:\bB^F|}=(q^6+q^5)(1+\gamma_{(6211,-1)}Y_{(6211,-1)}^0(u)).\]
We can thus argue as in \ref{82} to deduce that $u_{(6211)}$ can be chosen so that $\gamma_{(6211,-1)}=+1$, and this determines the $\bG^F$-class of $u_{(6211)}$ uniquely; moreover, this $\bG^F$-class is characterised by the property that it has a non-empty intersection with $\bB^Fw\bB^F$.

(b) Assume that $\bG^F={^2\!D}_5(q)$. The argument is again similar to the one in (a). The sum $\sum_{\phi\in \Irr(\bW^F)} \phi_q(T_w)[\phi](u)$ now evaluates to
\begin{align*}
&(1_{\bW^F})_q(T_w)+(31.)_q(T_w)\cdot q+(3.1)_q(T_w)\cdot q^2 \\
&+\tfrac12q^3Y_{(6211,-1)}(u)((2.2)_q(T_w)-(.4)_q(T_w)-(21.1)_q(T_w)+(211.)_q(T_w)) \\
&+\tfrac12q^4Y_{(6211,-1)}(u)((1.21)_q(T_w)-(.31)_q(T_w)-(11.11)_q(T_w)+(1111.)_q(T_w)).
\end{align*}
Taking $w:=s_1s_2s_3s_4\in\bW^F$, we have
\begin{align*}
&(1_{\bW^F})_q(T_w)=q^4,\quad(31.)_q(T_w)=q^4-q^3,\quad(3.1)_q(T_w)=q^4, \\
&(2.2)_q(T_w)=0,\quad(.4)_q(T_w)=-q^2,\quad(21.1)_q(T_w)=-q^3,\quad(211.)_q(T_w)=-q^3+q^2,\\
&(1.21)_q(T_w)=q,\quad(.31)_q(T_w)=-q^2+q,\quad(11.11)_q(T_w)=0,\quad(1111.)_q(T_w)=q^2.
\end{align*}
Hence, denoting by $C$ the $\bG^F$-conjugacy class of $u$, we obtain
\[\frac{|\bB^Fw\bB^F\cap C|\cdot|C|}{|\bG^F:\bB^F|}=(q^6+q^5)(1+\gamma_{(6211,-1)}Y_{(6211,-1)}^0(u)).\]
As before we conclude that $u_{(6211)}$ can be chosen such that $\gamma_{(6211,-1)}=+1$, and this determines the $\bG^F$-class of $u_{(6211)}$ uniquely; moreover, the $\bG^F$-class of $u_{(6211)}$ is characterised by the property that it meets $\bB^Fw\bB^F$.
\end{abs}

\begin{abs} In \ref{82} and \ref{6211}, we defined $u_{(82)}$ and $u_{(6211)}$ by requiring the signs $\gamma_{(82,-1)}$ and $\gamma_{(6211,-1)}$ to be $+1$, which we showed to be equivalent to saying that their $\bG^F$-classes have a non-empty intersection with the $F$-stable points of a suitable Bruhat cell. Hence, if we are given an explicit element $u\in\bG^F$ lying in the class $82$ (respectively, $6211$) of the algebraic group $\bG$, in order to show that $u$ is $\bG^F$-conjugate to $u_{(82)}$ (respectively, $u_{(6211)}$), it suffices to find \emph{one} $\bG^F$-conjugate of $u$ which lies in the associated Bruhat cell. This can be achieved using the criterion in \cite[Lemma~3.2.24]{Het3}. Indeed, let $\bU\subseteq\bB$ be the unipotent radical of the Borel subgroup $\bB$ and, for $1\leqslant i\leqslant5$, let $\bU_{\alpha_i}\subseteq\bU$ be the root subgroup associated to the simple root $\alpha_i$, with corresponding homomorphism $u_i:=u_{\alpha_i}\colon k\rightarrow\bU_{\alpha_i}$. Then $u_1(1)u_2(1)u_3(1)u_4(1)u_5(1)$ is an element of the regular unipotent class $82$ of $\bG$, and it also lies in $\bG^F$ regardless of whether $\bG^F=D_5(q)$ or $\bG^F={^2\!D}_5(q)$ (as $u_1(1)u_2(1)=u_2(1)u_1(1)$). Similarly, $u_1(1)u_2(1)u_3(1)u_4(1)\in\bG^F$ is an element of the unipotent class $6211$ of $\bG$. In view of our definition of the $w\in\bW^F$ associated to the classes $82$ (in \ref{82}) and $6211$ (in \ref{6211}), it directly follows from \cite[Lemma~3.2.24]{Het3} that we may take
\begin{align*}
u_{(82)}&=u_1(1)u_2(1)u_3(1)u_4(1)u_5(1),\\
u_{(6211)}&=u_1(1)u_2(1)u_3(1)u_4(1).
\end{align*}
The values at unipotent elements of the class functions $f_1$ and $f_2$ defined in \ref{D5a} are given by Table~\ref{f1f2}, where $u_{(82)}'\in\bG^F$ denotes an element of the regular unipotent class $82$ which is not $\bG^F$-conjugate to $u_{(82)}$, and $u_{(6211)}'\in\bG^F$ is an element of the unipotent class $6211$ which is not $\bG^F$-conjugate to $u_{(6211)}$.
\begin{table}[htbp] \caption{Values of $f_1$ and $f_2$ at unipotent elements for groups of type $D_5$} \label{f1f2}
\begin{center} $\renewcommand{\arraystretch}{1.1} \begin{array}{lccccccccc} 
\hline & u_{(82)} & u_{(82)}' & u_{(6211)}  & u_{(6211)}' & u\notin (82\cup 6211) \\ \hline 
f_1 & q^2 & -q^2 & q^3 & -q^3 & 0 \\ 
f_2 & 0 &  0 & q^4 & -q^4 & 0 \\ \hline 
\end{array}$
\end{center}
\end{table}
\end{abs}

\bigskip
\noindent {\bf Acknowledgements}. I thank Jonas Hetz and Gunter Malle for a 
careful reading of the manuscript and a number of useful comments. I am
indebted to Jonas Hetz for allowing me to include his results on groups 
of type $D_5$ in an appendix here. Thanks are also due to Jean Michel 
for help with his {\tt Julia} version of the {\sf CHEVIE} package. This 
article is a contribution to SFB-TRR 195 by the DFG (Deutsche
Forschungsgemeinschaft), Project-ID 286237555. 

%%%%%%%%%%%%%%%%%%%%%%%%%%%%%%%%%%%%%%%%%%%%%%%%%%%%%%%%%%%%%%%%%%%%%%%%%%%

\end{document}